\documentclass[12pt,letterpaper]{article}
\usepackage{amsmath,amsfonts,amsthm,amssymb}

\hyphenation{ar-chi-me-de-an}
\setlength\overfullrule{5pt} 


\swapnumbers 

\newtheorem{lemma}{Lemma}[section]
\newtheorem{corollary}[lemma]{Corollary}
\newtheorem{theorem}[lemma]{Theorem}

\theoremstyle{definition} 
\newtheorem{definition}[lemma]{Definition}
\newtheorem{remark}[lemma]{Remark}

\newtheorem{remarks}[lemma]{Remarks}




\newcommand{\Nat}{{\mathbb N}}

\newcommand\reals{{\mathbb R}}
\newcommand\ratnls{{\mathbb Q}}

\newcommand{\dg}{\sp{\text{\rm o}}}

\begin{document}

\title{Spectral order on a synaptic algebra}

\author{David J. Foulis{\footnote{Emeritus Professor, Department of
Mathematics and Statistics, University of Massachusetts, Amherst,
MA; Postal Address: 1 Sutton Court, Amherst, MA 01002, USA;
foulis@math.umass.edu.}}\hspace{.05 in},  Sylvia
Pulmannov\'{a}{\footnote{ Mathematical Institute, Slovak Academy of
Sciences, \v Stef\'anikova 49, SK-814 73 Bratislava, Slovakia;
pulmann@mat.savba.sk. The second author was supported by the 
Research and Development Support Agency under the contract 
APVV-16-0073 and grant VEGA 2/0069/16. }}}

\date{}

\maketitle

\begin{abstract}
\noindent We define and study an alternative partial order, called the spectral order,
on a synaptic algebra---a generalization of the self-adjoint part of a von Neumann
algebra. We prove that if the synaptic algebra  $A$ is norm complete (a Banach synaptic algebra),
then under the spectral order, $A$ is Dedekind $\sigma$-complete lattice, and the corresponding
effect algebra $E$ is a $\sigma$-complete lattice. Moreover, $E$ can be organized into a Brouwer-Zadeh
algebra in both the  usual (synaptic) and spectral ordering; and if $A$ is Banach, then $E$ is a
Brouwer-Zadeh lattice in the spectral ordering. If $A$ is of finite type, then De\,Morgan laws hold
on $E$ in both the synaptic and spectral ordering.
\end{abstract}

\section{Introduction}

In this paper, `iff' abbreviates `if and only if,' the notation := means `equals by definition,'
$\reals$ is the ordered field of real numbers, and $\Nat:=\{1,2,3,...\}$ is the ordered set of
natural numbers.

We shall often refer to the partial order relation $\leq$ on a partially ordered set (poset)
${\mathcal P}$ simply as an \emph{order} or an \emph{ordering}. The poset ${\mathcal P}$ is
\emph{lattice ordered}, or simply a \emph{lattice} iff every pair of elements $a,b\in{\mathcal P}$
has an infimum $a\wedge b$ (a greatest lower bound) and a supremum $a\vee b$ (a least upper bound).
An existing infimum (respectively, supremum) of a family $(a\sb{\gamma})\sb{\gamma\in\Gamma}$ in
${\mathcal P}$ is written as $\bigwedge\sb{\gamma\in\Gamma}a\sb{\gamma}$ (respectively, as $\bigvee
\sb{\gamma\in\Gamma}a\sb{\gamma}$).

We denote the von Neumann algebra of all bounded linear operators on a Hilbert space ${\mathfrak H}$
by ${\mathcal B}({\mathfrak H})$ and we denote the self-adjoint part of ${\mathcal B}({\mathfrak H})$
by ${\mathcal B}\sp{sa}({\mathfrak H})$. If $\langle\cdot,\cdot\rangle$ is the inner product on
${\mathfrak H}$, then the usual order $\leq$ for ${\mathcal B}\sp{sa}({\mathfrak H})$ is defined
for $A,B\in{\mathcal B}\sp{sa}({\mathfrak H})$ by $A\leq B$ iff $\langle Ax,x\rangle\leq\langle Bx,
x\rangle$ for all $x\in{\mathfrak H}$. Note that $A\leq B$ iff the spectrum of $B-A$ is contained in
$\{\alpha\in\reals:0\leq\alpha\}$. Following S. Gudder \cite{Gudlog}, we shall refer to $\leq$ as
the \emph{numerical order} on ${\mathcal B}\sp{sa}({\mathfrak H})$ and on subalgebras thereof. The
``unit interval" ${\mathcal E}({\mathfrak H}):=\{E\in{\mathcal B}\sp{sa}({\mathfrak H}):0\leq E\leq 1\}$,
under the partial binary operation $\oplus$ defined by $E\oplus F:=E+F$ iff $E+F\leq 1$, is called the
\emph{standard effect algebra}, and it is the prototype for the more general notion of an \emph{effect
algebra} \cite{FoBe}.

Let ${\mathcal R}$ be a von Neumann algebra and let ${\mathcal R}\sp{sa}$ be the self-adjoint part
of ${\mathcal R}$ with the numerical order. The system ${\mathcal E}({\mathcal R}):=\{E\in{\mathcal R}
\sp{sa}:0\leq E\leq 1\}$ with the partial binary operation $\oplus$ defined by $E\oplus F=E+F$ iff
$E+F\leq 1$ is an effect algebra that generalizes the standard effect algebra.

By a well-known theorem of S. Sherman \cite{Sh}, ${\mathcal R}\sp{sa}$ is a lattice iff ${\mathcal R}$
is commutative. Recall that ${\mathcal R}$ is a \emph{factor} iff its center consists only of scalar
multiples of the identity, and ${\mathcal R}\sp{sa}$ is an \emph{antilattice} iff, for all $a,b\in
{\mathcal R}\sp{sa}$, the infimum $a\wedge b$ exists in ${\mathcal R}\sp{sa}$ iff $a\leq b$ or $b
\leq a$. By \emph{Kadison's antilattice theorem} \cite{Kad}, ${\mathcal R}$ is a factor iff ${\mathcal R}
\sp{sa}$ is an antilattice. Thus, since ${\mathcal B}({\mathfrak H})$ is a factor, it follows that
${\mathcal B}\sp{sa}({\mathfrak H})$ is an antilattice under the numerical order.

Kadison's antilattice theorem has engendered considerable research on possible alternative orders
for ${\mathcal B}\sp{sa}({\mathfrak H})$ and related operator algebras that, optimally, would
result in a lattice, or at least in a poset with some lattice-like properties. Three (actually two)
of these alternative orders are as follows:

(1) Drazin's \emph{star order} $\leq\sb{\ast}$ \cite{Dr}, which was originally defined for so-called
proper involution rings, makes sense for a von Neumann algebra ${\mathcal R}$, and so does its
restriction to the self-adjoint part ${\mathcal R}\sp{sa}$ of ${\mathcal R}$. The star order on
${\mathcal R}\sp{sa}$ has various equivalent characterizations, one of which is as follows.
For $A,B\in{\mathcal R}\sp{sa}$, $A\leq\sb{\ast}B$ iff there exists $C\in{\mathcal R}\sp{sa}$ such
that $AC=0$ and $A+C=B$.  A number of authors, e.g., \cite{BoHa, Ci, PuVi, WJ} have studied the star
order for various operator algebras and related structures.

(2) S.P. Gudder \cite{Gudlog} has formulated a so-called \emph{logical order} $\preceq$ for
${\mathcal B}\sp{sa}({\mathfrak H})$ as follows: If $A\in{\mathcal B}\sp{sa}({\mathfrak H})$,
${\mathcal F}(\reals)$ is the $\sigma$-field of real Borel sets, and ${\mathcal P}({\mathfrak H})$ is
the lattice under the numerical order of all projections $P=P\sp{2}\in{\mathcal B}\sp{sa}({\mathfrak H})$,
denote the real observable corresponding to $A$ by $P\sp{A}\colon{\mathcal F}(\reals)\to{\mathcal P}
({\mathfrak H})$. Then for $A,B\in{\mathcal B}\sp{sa}({\mathfrak H})$, $A\preceq B$ iff
$P\sp{A}(\Delta)\leq P\sp{B}(\Delta)$ for all $\Delta\in{\mathcal F}(\reals)$ with $0\notin\Delta$.
By \cite[Theorem 4.6]{Gudlog}, $A\preceq B\Leftrightarrow A\leq\sb{\ast}B$. It is also shown in
\cite{Gudlog} that, under the logical order, ${\mathcal B}\sp{sa}({\mathfrak H})$ is a near lattice, i.e.,
if for $A,B\in{\mathcal B}\sp{sa}({\mathfrak H})$ there is $C\in{\mathcal B}\sp{sa}({\mathfrak H})$ with
$A,B\preceq C$, then both the supremum and infimum of $A$ and $B$ with respect to $\preceq$ exist. By
\cite[Theorem 4.17]{Gudlog}, if  $H$ is finite dimensional, then the logic-order infimum of any two elements
in ${\mathcal B}\sp{sa}({\mathfrak H})$ exists. This result was extended also to infinite dimensional
${\mathfrak H}$ in \cite[Corollary 4.6]{PuVi} in the sense that, in ${\mathcal B}\sp{sa}({\mathfrak H})$,
the logic-order infimum of any nonempty subset exists and the logic-order supremum of any nonempty subset
that is bounded above exists. In \cite{WJ}, explicit forms are found for logical infima and suprema.

Also, under the logical (or star) order, ${\mathcal B}\sp{sa}({\mathfrak H})$ forms a so-called \emph
{generalized $\sigma$-orthoalgebra} \cite[\S 2]{Gudlog} under the partial operation $A\oplus B:=A+B$
iff $AB=0$, and $\preceq$ is the corresponding induced order. The logic order has been studied in many
further papers, e.g. \cite{BoHa, Ci, PuVi}, and the notion of logical (or star) order has been extended
to so-called \emph{generalized Hermitian algebras} \cite{FPgh} by Y. Li and X. Xu \cite{LiXu}.

(3) M.P. Olson \cite{Ols} introduced the \emph{spectral order} $\leq\sb{s}$ for the self-adjoint part
${\mathcal R}\sp{sa}$ of a von Neumann algebra. See Definition \ref{de:specord} below. For ${\mathcal R}
\sp{sa}$, the spectral order agrees with the numerical order on projections (idempotent elements $P=P\sp{2}$)
and on commutative subalgebras. Under the spectral order, ${\mathcal R}\sp{sa}$ forms a Dedekind complete lattice.
The set of effects ${\mathcal E}({\mathcal R})$ under the spectral order is studied in \cite{GLP}, and in
\cite{CaHa} it is shown that it is a complete Brouwer-Zadeh lattice \cite{dG} in which both De\,Morgan laws
are satisfied for the Brouwer complement iff $\mathcal R$ is of finite type. See also \cite{H} for related results
on Jordan algebras and \cite{HT1, HT2} on AW*-algebras.

Our purpose in this paper, as indicated by its title, is to study the spectral order on a so-called \emph{synaptic
algebra} (abbreviated SA) \cite{Fsa, FPproj, FPreg, FPtd, FPsym, FJPtp, FJPep, FPcom, FPVecLat, FPantilat, FPba, FJPls,
Pid}. The formulation of a synaptic algebra \cite{Fsa} was motivated by an effort to define, by means of a few simple
and physically plausible axioms, an algebraic structure suitable for the mathematical description of a quantum mechanical
system. A synaptic algebra (from the Greek `sunaptein,' meaning to join together), unites the notions of an order-unit
normed space \cite{Alf}, a special real unital Jordan algebra \cite{McC}, a convex sharply dominating effect algebra
\cite{Gudsd, GBBP}, and an orthomodular lattice \cite{Be,Ka}. Synaptic algebras generalize the self-adjoint parts of Rickart
C$\sp{\ast}$-algebras, AW$\sp{\ast}$-algebras, and von Neumann algebras, as well as JB-algebras, JBW$\sp{\ast}$-algebras,
spin factors \cite{Top}, and OJ-algebras \cite{Sa}.  Also, generalized Hermitian algebras \cite{FPgh, FPspin, FPreg}, are
special cases of SAs. Axioms SA1--SA8 for an SA and some of the basic properties of an SA can be found in Section
\ref{sc:SAs} below.

A synaptic algebra is lattice ordered if and only if it is commutative \cite[Theorem 5.6]{FPVecLat}. An analogue of
Kadison's antilattice theorem \cite{Kad} is proved for SAs in \cite{FPantilat}.

Let $A$ be a synaptic algebra, let $E:=\{e\in A:0\leq e\leq 1\}$ be the set of effects in $A$, and let $P:=\{p\in A:
p=p\sp{2}\}$ be the orthomodular lattice (OML) of projections in $A$. By \cite[Theorem 5.3]{FPba}, $A$ is Banach
(i.e., norm complete) iff it is isomorphic to the self-adjoint part of a Rickart C$\sp{\ast}$-algebra. If $A$ is
Banach, then under the spectral order, $A$ is a Dedekind $\sigma$-complete lattice and $E$ is a $\sigma$-complete
lattice (Theorem \ref{th:Sigmalat} and Corollary \ref{co:Sigmalat} below); moreover (by a similar proof), if in
addition $P$ is a complete OML, then $A$ is a Dedekind complete lattice and $E$ is a complete lattice. We show that $E$
can be organized into a Brouwer-Zadeh effect algebra in both the synaptic and the spectral orderings; furthermore, if $A$
is Banach, then $E$ is a Brouwer-Zadeh lattice in the spectral ordering (Theorem \ref{th:BZ} below). As a consequence of
Theorem \ref{le:caha} below, if  $P$ is a complete OML and $A$ is of finite type, then under both the  synaptic and spectral
order, the Brouwer complementation on $E$ satisfies the De\,Morgan laws.

We note that, working with synaptic algebras, we cannot make use of the rich Hilbert space structure, and so have to apply alternative, usually more algebraic or order-theoretic, methods. Some of our results overlap with the results in \cite{HT1, HT2} obtained for AW*-algebras, so that they also hold for Banach synaptic algebras with a complete projection lattice. We have shown that all the results can be also obtained from the axioms of synaptic algebras and properties derived from them. Moreover, some of the results hold in more general classes of synaptic algebras.

\section{Synaptic algebras} \label{sc:SAs}

In this section, we introduce the definition of a synaptic algebra \cite{Fsa} and recall some of its properties.

Let $R$ be a linear associative algebra with unit element $1$ over the real or complex numbers and let $A$ be a real
linear subspace of $R$. (To help fix ideas, the reader can think of $R$ as a von Neumann algebra and of $A$ as the
self-adjoint part of $R$.) Let $a,b\in A$. We understand that the product $ab$ is calculated in $R$, and may or may not
belong to $A$. We write $aCb$ iff $ab=ba$. The set $C(a):=\{b\in A: aCb\}$ is the \emph{commutant} of $a$. If
$B\subseteq A$, then $C(B):=\bigcap_{b\in B}C(b)$ is the \emph{commutant} of $B$, $CC(B):=C(C(B))$ is the
\emph{bicommutant} of $B$, and $CC(a):=CC(\{a\})$ is the \emph{bicommutant} of $a$. An element $p\in A$ with $p=p^2$
is called a \emph{projection}, and we denote the set of projections in $A$ by $P$.

The real linear space $A$ is a \emph{synaptic algebra} with \emph{enveloping algebra} $R$ iff the following
conditions are satisfied:
\begin{enumerate}
\item[SA1] $A$ is partially ordered Archimedean real linear space with positive cone $A^+:=\{a\in A:0\leq a\}$,
$1\in A^+$ is an order unit in $A$, and $\|\cdot\|$ is the corresponding order-unit norm on $A$ \cite[pp. 67-69]{Alf}.
We shall assume that $1\not=0$, which enables us, as usual, to identify each real number $\alpha\in\reals$ with the
element $\alpha1\in A$.

\item[SA2] If $a\in A$, then $a^2\in A^+$.

\item[SA3] If $a,b\in A^+$, then $aba\in A^+$.

\item[SA4] If $a\in A$ and $b\in A^+$, then $aba=0\, \implies\, ab=ba=0$.

\item[SA5] If $a\in A^+$, there exists $b\in A^+\bigcap CC(a)$ such that $b^2=a$.

\item[SA6] If $a\in A$, there exits $p\in A$ such that $p=p^2$ and, for all $b\in A$, $ab=0 \Leftrightarrow pb=0$.

\item[SA7] If $1\leq a$ there exists $b\in A$ such that $ab=ba=1$.

\item[SA8] If $a,b\in A$, $a_1\leq a_2\leq \cdots $ is an ascending sequence of pairwise commuting elements of $C(b)$
and $\lim_{n\to \infty} \|a-a_n\|=0$, then $a\in C(b)$.
\end{enumerate}

Let us  briefly recall some  consequences of the axioms SA1-SA8. As a consequence of SA1: (i)  $A$ is a partially
ordered real linear space under $\leq$, called the \emph{synaptic order}; (ii) $0 < 1$ and $1$ is an order unit in
$A$, i.e., for each $a\in A$ there exists $n\in {\mathbb N}$ such that $a\leq n1$; (iii) $A$ is Archimedean, i.e.,
if $a,b\in A$ and $na\leq b$ for all $n\in{\mathbb N}$, then $a\leq 0$. The order-unit norm $\|\cdot\|$ on $A$,
defined by
\[
 \|a\|:=\inf\{0<\lambda\in\reals:-\lambda\leq a\leq\lambda\},
\]
is related to the synaptic order by the following properties: For $a,b\in A$,
\[
-\|a\|\leq a\leq \|a\|, \mbox{and if} \, -b\leq a\leq b, \mbox{then}  \, \|a\|\leq \|b\|.
\]

As a consequence of SA2, $A$ is closed under squaring, whence it forms a special Jordan algebra under the Jordan
product $a\odot b:=\frac{1}{2}(ab+ba)=\frac{1}{2}((a+b)^2-a^2-b^2)\in A$ for $a,b\in A$.

If $a,b\in A$, then $aba=2a\odot(a\odot b)-(a\odot a)\odot b\in A$, hence we may define the \emph{quadratic
mapping} $b\mapsto aba\in A$. For each $a\in A$, the quadratic mapping $b\mapsto aba$, $b\in A$, is obviously
linear and by \cite[Theorem 4.2]{Fsa} it is order preserving, which yields a stronger version of SA3.

Putting $b=1$ in SA4, we find that for $a\in A$, $a\sp{2}=0\Rightarrow a=0$.

By SA5, if $0\leq a\in A$, there exists $0\leq b\in CC(a)$ with $b^2=a$; by \cite[Theorem 2.2]{Fsa}, $b$ is
uniquely determined by $a$; and we define $a^{1/2}:=b$. The \emph{absolute value}  $|a|$ of $a\in A$ is defined
by $|a|:=(a^2)^{1/2}$ \cite[Definition 3.1]{Fsa}. We also define the positive and negative parts of $a\in A$ by
\[
a\sp{+}:=\frac12(|a|+a),\ a\sp{-}:=\frac12(|a|-a),
\]
and we have $0\leq |a|,\, a^+,\, a^- \in CC(a)$, $a=a^+-a^-$, $|a|=a^++a^-$, and $a^+a^-=a^-a^+=0$.

If $a\in A$, then according to SA6, there is a projection $p\in P$ such that, for all $b\in A$, $ab=0  \,
\Leftrightarrow \, pb=0$. The projection $p$ is uniquely determined by $a$  \cite[Theorem 2.7]{Fsa}, and we
define the \emph{carrier} $a\dg$ of $a$ by $a\dg:=p$. (Some authors would refer to $a\dg$ as the \emph
{support} of $a$.) Then, for $a,b\in A$, $ab=0\Leftrightarrow a\dg b=0\Leftrightarrow a\dg b\dg=0
\Leftrightarrow b\dg a\dg=0\Leftrightarrow ba\dg=0\Leftrightarrow ba=0$, $a\dg\in CC(a)$, and $a\dg$ is
the smallest projection $q\in P$ such that $a=aq$ (or equivalently, such that $a=qa$).

By SA7, if $1\leq a$, then $a$ is invertible, and we denote its inverse by $a^{-1}$. We note that if $a$ is
invertible, then $a^{-1}\in CC(a)$.

In the presence of conditions SA1-SA7, condition SA8 is equivalent to the condition that, for every $a\in A$,
the commutant $C(a)$ is norm closed in $A$ \cite[Theorem 9.11]{Fsa}. Hence, for $B\subseteq A$, both $C(B)$
and $CC(B)$ are norm closed in $A$.

Under the partial order inherited from $A$, the set $P$ of projections in $A$ forms an orthomodular lattice (OML) \cite{Be, Ka, PP} with the smallest element $0$, largest element  $1$ and orthocomplementation $p\mapsto
p^{\perp}:=1-p$ \cite[\S 5]{Fsa}. Let $p,q\in P$. It turns out that $p\leq q\,\Leftrightarrow\ p=pq\,
\Leftrightarrow\, p=qp$ and $p\leq q^{\perp}\,\Leftrightarrow\, pq=0\ \Leftrightarrow\, p+q\in P$. We denote by
$p\vee q$ and $p\wedge q$ the supremum and the infimum, respectively, of $p$ and $q$ in $P$. It is not difficult
to show that, if $pCq$, then $p\vee q=p+q-pq$ and $p\wedge q=pq$.

A subset $S\subseteq A$ is a \emph{sub-synaptic algebra} of $A$ iff it is a linear subspace of $A$, $1\in S$,
and $S$ is closed under the formation of squares, square roots, carriers, and inverses. A sub-synaptic algebra
of $A$ is a synaptic algebra in its own right under the restrictions to $S$ of the synaptic order and the
operations on $A$. Let $B\subseteq A$. Then $C(B)$ is a sub-synaptic algebra of $A$. The set $B$ is \emph
{commutative} iff $aCb$ for all $a,b\in B$, i.e., iff $B\subseteq C(B)$. If $B$ is commutative, then $CC(B)$
is a commutative sub-synaptic algebra of $A$. A maximal commutative subset of $A$ is called a \emph{C-block}.
Every C-block is a commutative sub-synaptic algebra of $A$. By Zorn's lemma, every commutative subset of $A$
can be enlarged to a C-block and, as every singleton subset $\{a\}$ of $A$ is commutative, $A$ is covered
by its own C-blocks.

The synaptic algebra $A$ is commutative iff the OML $P$ of projections in $A$ is a Boolean algebra. For a
commutative SA $A$, we have the following functional representation theorem \cite[Theorem 4.1]{FPproj}: Let $X$
denote the Stone space of the Boolean algebra $P$ and let $C(X,\reals)$ be the partially ordered Banach algebra
under pointwise order and operations of all continuous real-valued functions on $X$. Then there is a norm-dense
subset $F\subseteq C(X,{\mathbb R})$ and a synaptic isomorphism $\Psi\colon A\to F$ such that the restriction of
$\Psi$ to $P$ is a Boolean isomorphism of $P$ onto the set of all characteristic functions (indicator functions)
of clopen (both closed and open) subsets of $X$. Moreover, if $A$ is norm complete, then $X$ is basically
disconnected and $F=C(X,{\mathbb R})$ \cite{FPba}.

Elements of the set $E:=\{e\in A: 0\leq e\leq 1\}$ are called \emph{effects}, and under the convex structure
inherited from the linear structure of $A$, $E$ forms a convex effect algebra $(E;0,1,\oplus)$, where, for
$e,f\in E$, $e\oplus f$ is defined iff $e+f\leq 1$, in which case $e\oplus f:=e+f$ \cite{GBBP}. Clearly $P
\subseteq E$ and it turns out that the infimum and supremum $p\wedge q$ and $p\vee q$ of $p$ and $q$ in
$P$ are also the infimum and supremum of $p$ and $q$ in $E$. Therefore, if $e,f\in E$, no confusion will result
if we denote an existing infimum or supremum of $e$ and $f$ in $E$ by $e\wedge f$ or by $e\vee f$. If $e\in E$,
then $e\sp{\perp}:=1-e\in E$ is called the \emph{orthosupplement} of $e$, and $e$ is said to be \emph{sharp}
iff the infimum $e\wedge e\sp{\perp}$ in $E$ exists and equals $0$. It can be shown that, if $e\in E$, then
$e\in P$ iff $e$ is an extreme point of $E$ iff $e$ is sharp.  Moreover, if $p\in P$, $e\in E$, then $p\leq e
\,\Leftrightarrow\, p=pe\,\Leftrightarrow\,p=ep$; similarly, $e\leq p\,\Leftrightarrow\, e=pe\,\Leftrightarrow
\, e=ep$; and if $eCp$, then $ep=e\wedge p=pe$ \cite[Theorems 2.5 and 2.7]{FJPep}. If $e\in E$, then $e\dg\in
P\subseteq E$ and $e\dg$ is the smallest sharp element $p\in E$ such that $e\leq p$; hence, $E$ is a \emph{sharply
dominating} effect algebra and therefore it forms a so-called \emph{Brouwer-Zadeh poset} \cite{Gudsd}. (See
Section \ref{sc:K&BZ} below.)

By \cite[Theorem 9.1]{FPba} a \emph{generalized Hermitian algebra} (GH-algebra) \cite{FPgh} is the same thing as
a synaptic algebra $A$ that satisfies the following condition: Every bounded increasing sequence $a\sb{1}\leq
a\sb{2}\leq a\sb{3}\leq\cdots$ of pairwise commuting elements in $A$ has a supremum $a$ in $A$.

\section{Spectral order }

The assumption that $A$ is a synaptic algebra, $P$ is the OML  of projections in $A$, and $E$ is the algebra of effects
in $A$ remains in force.

Every  $a\in A$ defines and is determined by its \emph{spectral resolution} $(p_{a,\lambda})_{\lambda\in {\mathbb R}}$
in $P\cap CC(a)$, where $p_{a,\lambda}:=1-((a-\lambda)^+)\dg=(((a-\lambda)\sp{+})\dg)\sp{\perp}$ for $\lambda\in\reals$.
Also, $L_a:=\sup\{\lambda\in\reals:\lambda\leq a\}=\sup\{\lambda\in\reals:p_{a,\lambda}=0\}\in\reals$, $U_a:=\inf
\{\lambda\in\reals:a\leq\lambda\}=\inf\{\lambda\in\reals:p_{a,\lambda}=1\}\in\reals$, and
\[
a=\int_{L_a-0}^{U_a}\lambda dp\sb{a,\lambda},
\]
where the Riemann-Stieltjes type sums converge to the integral in norm. Two elements in $A$ commute iff their respective
spectral resolutions commute pairwise \cite[\S 8]{Fsa}.

\begin{theorem} {\rm\cite[Theorem 8.4]{Fsa}} \label{th:specresprops}
The spectral resolution $(p_{a,\lambda})_{\lambda\in\reals}$ of an element $a\in A$ has the following
properties for all $\lambda,\mu\in\reals${\rm: (i)} $p\sb{a,\lambda}(a-\lambda)\leq 0\leq(1-p\sb{a,\lambda})
(a-\lambda)$. {\rm(ii)} $\lambda \leq\mu\ \Rightarrow\ p_{a,\lambda}\leq p_{a,\mu}$. {\rm(iii)}
$\mu\geq U\sb{a}\ \Rightarrow\ p_{a,\mu}=1$. {\rm(iv)} $\mu <L\sb{a}\ \Rightarrow\ p_{a,\mu}=0$ and
$L\sb{a}<\mu\ \Rightarrow 0<p\sb{a,\mu}$. {\rm(v)} If $\alpha\in\reals$, then $p_{a,\alpha}=
\bigwedge_{\mu \in {\mathbb R}} \{ p_{a,\mu}:\alpha <\mu\}$.
\end{theorem}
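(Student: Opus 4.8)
The plan is to verify all five assertions inside the commutative sub-synaptic algebra $CC(a)$, which contains $a$ and, being a sub-synaptic algebra closed under squares, square roots and carriers, also contains $(a-\lambda)^{+}$, $(a-\lambda)^{-}$, $((a-\lambda)^{+})\dg$ and $p_{a,\lambda}$ for every $\lambda\in\reals$, all of these elements commuting pairwise. Fixing $a$ and $\lambda$, I abbreviate $b:=a-\lambda$ (identifying $\lambda$ with $\lambda1$) and write $q:=(b^{+})\dg$, so $p_{a,\lambda}=1-q=q^{\perp}$. I shall use repeatedly: the carrier identities of Section~\ref{sc:SAs} ($xy=0\Leftrightarrow x\dg y=0\Leftrightarrow yx=0$, $\;x=x\,x\dg$, $\;x\dg$ is the least projection $s$ with $x=xs$, $\;0\dg=0$, and the carrier of an invertible element is $1$); the identities $b=b^{+}-b^{-}$, $b^{+}b^{-}=0$, $0\leq b^{+},b^{-}$; the fact that $c\leq0$ forces $c^{+}=0$ while $c\geq0$ forces $c^{+}=c$; the order-preservation of the quadratic maps (whence $\|ryr\|\leq\|y\|$ for a projection $r$); and the Archimedean property of SA1.

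For~(i): since $b^{+}b^{-}=0$, the carrier identity gives $qb^{-}=0$, so (the elements commuting) $qb=qb^{+}=b^{+}\geq0$ and $(1-q)b=b-b^{+}=-b^{-}\leq0$; as $1-p_{a,\lambda}=q$, this is exactly $(1-p_{a,\lambda})(a-\lambda)=b^{+}\geq0$ and $p_{a,\lambda}(a-\lambda)=-b^{-}\leq0$. For~(iii) and~(iv) I first note $L_{a}\leq a\leq U_{a}$ (immediate from the definitions and the Archimedean property). If $\mu\geq U_{a}$ then $a-\mu\leq0$, so $(a-\mu)^{+}=0$, $((a-\mu)^{+})\dg=0\dg=0$ and $p_{a,\mu}=1$. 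If $\mu<L_{a}$ then $a-\mu\geq(L_{a}-\mu)1>0$, so $a-\mu$ is invertible (apply SA7 to $(L_{a}-\mu)^{-1}(a-\mu)$), $(a-\mu)^{+}=a-\mu$ has carrier $1$, and $p_{a,\mu}=0$. If $L_{a}<\mu$, then $a\not\geq\mu$, so $(a-\mu)^{-}\neq0$; from $(a-\mu)^{+}(a-\mu)^{-}=0$ the carrier identity gives $((a-\mu)^{+})\dg(a-\mu)^{-}=0$, hence $(a-\mu)^{-}=(1-((a-\mu)^{+})\dg)(a-\mu)^{-}=p_{a,\mu}(a-\mu)^{-}$, forcing $p_{a,\mu}\neq0$. (These remarks also justify the two displayed descriptions of $L_{a},U_{a}$ preceding the theorem.)

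For~(ii): let $\lambda\leq\mu$, put $c:=a-\lambda$ and $t:=\mu-\lambda\geq0$, so $a-\mu=c-t$. A short computation gives $(|c|+t)^{2}-|c-t|^{2}=4tc^{+}\geq0$ and $(|c-t|+t)^{2}-|c|^{2}=4t(c-t)^{-}\geq0$; since the square root is order-preserving on $CC(a)\cap A^{+}$, this yields $|c-t|\leq|c|+t$ and $|c|\leq|c-t|+t$, whence
\[
0\ \leq\ (a-\lambda)^{+}-(a-\mu)^{+}\ =\ \tfrac12\bigl(|c|-|c-t|+t\bigr)\ \leq\ (\mu-\lambda)1 .
\]
Now set $r:=((a-\lambda)^{+})\dg$, so $(a-\lambda)^{+}(1-r)=0$. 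Applying the order-preserving quadratic map $y\mapsto(1-r)y(1-r)$ to $0\leq(a-\mu)^{+}\leq(a-\lambda)^{+}$ gives $0\leq(1-r)(a-\mu)^{+}(1-r)\leq(1-r)(a-\lambda)^{+}(1-r)=0$, so SA4 yields $(a-\mu)^{+}(1-r)=0$, i.e. $((a-\mu)^{+})\dg\leq r$; hence $p_{a,\lambda}=1-r\leq1-((a-\mu)^{+})\dg=p_{a,\mu}$.

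Finally, (v) is the step I expect to be the main obstacle: a lower bound $r\in P$ of $\{p_{a,\mu}:\alpha<\mu\}$ need not lie in $CC(a)$, so a purely order-theoretic argument inside the commutative algebra is unavailable. By~(ii), $p_{a,\alpha}$ is a lower bound of this family, so it suffices to show that every lower bound $r$ satisfies $r\leq p_{a,\alpha}$. From the displayed estimate, $(a-\mu)^{+}\to(a-\alpha)^{+}$ in norm as $\mu\downarrow\alpha$. Writing $q_{\mu}:=((a-\mu)^{+})\dg$, the relation $r\leq p_{a,\mu}=q_{\mu}^{\perp}$ means $rq_{\mu}=q_{\mu}r=0$; combined with $(a-\mu)^{+}=(a-\mu)^{+}q_{\mu}$ this gives $r(a-\mu)^{+}r=0$. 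Since $y\mapsto ryr$ is a norm contraction, letting $\mu\downarrow\alpha$ yields $r(a-\alpha)^{+}r=0$, so SA4 gives $r(a-\alpha)^{+}=0$, hence $r((a-\alpha)^{+})\dg=0$, that is $r\leq p_{a,\alpha}$. Thus the infimum in~(v) exists and equals $p_{a,\alpha}$, completing the proof.
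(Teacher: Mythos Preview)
The paper does not actually prove this theorem; it is quoted verbatim from \cite[Theorem~8.4]{Fsa} with no argument supplied, so there is nothing in the paper to compare your proof against.

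That said, your proof is correct and fully self-contained within the synaptic-algebra framework of Section~\ref{sc:SAs}. Parts (i), (iii), (iv) are routine consequences of the carrier calculus and the decomposition $b=b^{+}-b^{-}$. For (ii) your computation $(|c|+t)^{2}-|c-t|^{2}=4tc^{+}$ and its companion are correct, and the monotonicity of the square root on commuting positives that you invoke is legitimate (it follows, for instance, from the functional representation of the commutative sub-SA $CC(a)$ recalled in Section~\ref{sc:SAs}); a one-line pointer to that fact would tighten the exposition. The Lipschitz estimate $0\leq(a-\lambda)^{+}-(a-\mu)^{+}\leq(\mu-\lambda)1$ you extract is exactly what is needed in (v): it gives norm convergence $(a-\mu)^{+}\to(a-\alpha)^{+}$, and since the quadratic map $y\mapsto ryr$ is a norm contraction, you correctly pass to the limit in $r(a-\mu)^{+}r=0$ and then apply SA4. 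This handles the genuine difficulty you flagged, namely that an arbitrary lower bound $r\in P$ need not lie in $CC(a)$.
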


\begin{lemma} \label{le:SpecResCarrier}
If $0\leq a\in A$, then $a\dg=1-p\sb{a,0}$.
\end{lemma}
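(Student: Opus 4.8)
The statement to prove is that for $0\leq a\in A$, the carrier satisfies $a\dg=1-p\sb{a,0}$. Recall that $a\dg$ is characterized (from Section~\ref{sc:SAs}) as the smallest projection $q\in P$ with $a=aq$, equivalently the unique projection such that for all $b\in A$, $ab=0\Leftrightarrow qb=0$. So the plan is to show that $q:=1-p\sb{a,0}$ has the defining property of $a\dg$. By the spectral resolution machinery recalled just before Theorem~\ref{th:specresprops}, we know $p\sb{a,\lambda}\in P\cap CC(a)$, so in particular $q=1-p\sb{a,0}$ commutes with $a$; this will let us freely multiply.

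First I would establish $a=aq$, i.e.\ $ap\sb{a,0}=0$. By Theorem~\ref{th:specresprops}(i) with $\lambda=0$ we have $p\sb{a,0}\,a\leq 0$; on the other hand $a\geq 0$ and $p\sb{a,0}\geq 0$, so by SA3 (the quadratic map is order preserving, applied to $a$ in the slot sandwiched by $p\sb{a,0}$, or more directly: $p\sb{a,0}\,a\,p\sb{a,0}\geq 0$ since $a\geq 0$). But $p\sb{a,0}$ commutes with $a$, so $p\sb{a,0}\,a\,p\sb{a,0}=p\sb{a,0}\,a$ (using $p\sb{a,0}^2=p\sb{a,0}$), hence $p\sb{a,0}\,a\geq 0$. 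Combined with $p\sb{a,0}\,a\leq 0$ this gives $p\sb{a,0}\,a=0$, hence $a=a(1-p\sb{a,0})=aq$. This shows $a\dg\leq q$.

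For the reverse inequality I must show $q$ is the \emph{smallest} such projection, i.e.\ if $r\in P$ and $a=ar$ then $q\leq r$; equivalently $a\dg\geq q$, i.e.\ $q=q a\dg$, i.e.\ $q(1-a\dg)=0$. Set $p:=a\dg$. Since $a=ap$ and $a$ commutes with $p\sb{a,0}$... actually the cleanest route: I want $1-p\sb{a,0}\leq a\dg$, i.e.\ $a\dg{}^{\perp}\leq p\sb{a,0}$. Now $a\dg{}^{\perp}=1-a\dg$ is the largest projection with $a(1-a\dg)=0$. By Theorem~\ref{th:specresprops}(i), $(1-p\sb{a,0})(a-0)\geq 0$, i.e.\ $(1-p\sb{a,0})a\geq 0$; and $(1-p\sb{a,0})$ commutes with $a$, so $(1-p\sb{a,0})a(1-p\sb{a,0})=(1-p\sb{a,0})a\geq 0$. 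I'd then argue that $(1-p\sb{a,0})a$ has carrier exactly $1-p\sb{a,0}$: indeed from part (iv) and the structure of the spectral resolution, restricting $a$ to the range of $1-p\sb{a,0}$ yields a positive element bounded below by arbitrarily small positive scalars on that range, hence invertible there, hence with carrier $1-p\sb{a,0}$. More concretely, if $r\in P$ with $ar=0$, then also $p\sb{a,\lambda}\geq 1-r$ for all $\lambda>0$ would need checking; the slick version: $ar=0$ and $a\geq 0$ imply $r\leq p\sb{a,0}$ by noting that $a\geq \lambda(1-p\sb{a,\lambda})$ fails...

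**Main obstacle.** The delicate half is the minimality, i.e.\ showing $1-p\sb{a,0}\leq a\dg$, equivalently that $ar=0\Rightarrow r\leq p\sb{a,0}$ for $r\in P$. The natural tool is the Riemann--Stieltjes representation $a=\int_{L_a-0}^{U_a}\lambda\,dp\sb{a,\lambda}$ together with Theorem~\ref{th:specresprops}(iv): for $\mu>0$ small, $p\sb{a,\mu}$ absorbs more and more, and one shows $ar=0$ forces $r(p\sb{a,\mu}-p\sb{a,0})=0$ for all $\mu>0$, then uses part~(v), $p\sb{a,0}=\bigwedge\{p\sb{a,\mu}:0<\mu\}$ — wait, that gives $p\sb{a,0}$, not its complement — to conclude. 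I'd handle this by observing that on the range of $r$ (where $a$ acts as $0$), every spectral projection $p\sb{a,\lambda}$ for $\lambda\geq 0$ must dominate $r$: from $p\sb{a,\lambda}a\leq 0$ and $(1-p\sb{a,\lambda})a\geq \lambda(1-p\sb{a,\lambda})\geq 0$ we get, multiplying the latter by $r$ and using $ar=0$, that $\lambda(1-p\sb{a,\lambda})r=0$, so for $\lambda>0$, $(1-p\sb{a,\lambda})r=0$, i.e.\ $r\leq p\sb{a,\lambda}$; then by (v), $r\leq\bigwedge\{p\sb{a,\mu}:0<\mu\}=p\sb{a,0}$. Applying this with $r=1-a\dg$ (which satisfies $a(1-a\dg)=0$) yields $1-a\dg\leq p\sb{a,0}$, hence $1-p\sb{a,0}\leq a\dg$, completing the proof. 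The only point requiring care is the legitimacy of the manipulation $(1-p\sb{a,\lambda})a\geq\lambda(1-p\sb{a,\lambda})$, which is exactly a restatement of Theorem~\ref{th:specresprops}(i), and the commutativity needed to pass products around, which is guaranteed since $p\sb{a,\lambda}\in CC(a)$.
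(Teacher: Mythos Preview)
Your argument is correct, but you have overlooked a direct route that makes the lemma essentially a tautology. Recall from the paragraph introducing spectral resolutions (just before Theorem~\ref{th:specresprops}) that by \emph{definition} $p_{a,\lambda}:=1-((a-\lambda)^{+})\dg$. Setting $\lambda=0$ gives $1-p_{a,0}=(a^{+})\dg$, and since $0\leq a$ forces $a^{+}=a$, one obtains $1-p_{a,0}=a\dg$ immediately. This is precisely the paper's one-line proof. Your route via Theorem~\ref{th:specresprops}(i) and (v) is sound---the first half ($p_{a,0}a\leq 0$ together with $p_{a,0}ap_{a,0}\geq 0$ and commutativity give $p_{a,0}a=0$), and the second half ($ar=0$ with $r\in P$ implies $r\leq p_{a,\lambda}$ for all $\lambda>0$, hence $r\leq p_{a,0}$ by right-continuity) goes through once you observe that $r=1-a\dg$ and each $p_{a,\lambda}$ lie in $CC(a)$ and therefore commute with one another and with $a$, so sandwiching by $r$ is legitimate. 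What your approach buys is an independent verification of the consistency between the carrier and the spectral resolution, but for the purpose of this lemma the definition already does all the work.
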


\begin{proof}
Since $0\leq a$, we have $a=a\sp{+}$, whence $1-p\sb{a,0}=((a-0)\sp{+})\dg=a\dg$.
\end{proof}

In the following definition, we introduce the \emph{spectral order} $\leq\sb{s}$ on $A$ \cite{dG, Ols}.

\begin{definition}\label{de:specord}
For $a,b\in A$, $a\leq_s b$ iff for all $\lambda \in {\mathbb R}$, $p_{b,\lambda}\leq p_{a,\lambda}$.
\end{definition}

According to the next lemma, the spectral order agrees with the synaptic order on the OML $P$.

\begin{lemma}\label{le:proj}
For $q,r \in P$, $q\leq r \, \Leftrightarrow\, q\leq_sr$.
\end{lemma}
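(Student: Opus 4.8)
The plan is to compute the spectral resolution of a projection explicitly and then read off the equivalence directly from Definition \ref{de:specord}. First I would observe that for a projection $q\in P$ we have $L_q=0$ and $U_q=1$ (assuming $0\neq q\neq 1$; the degenerate cases are handled separately or absorbed into the same formula), and then determine $p_{q,\lambda}$ for each real $\lambda$. For $\lambda<0$ we get $q-\lambda\geq q+|\lambda|\geq 0$ is invertible-like, so $(q-\lambda)^+=q-\lambda$ has carrier $1$, giving $p_{q,\lambda}=0$; for $0\leq\lambda<1$ we have $(q-\lambda)^+=(1-\lambda)q$, whose carrier is $q$, so $p_{q,\lambda}=1-q=q^\perp$; and for $\lambda\geq 1$ we have $q-\lambda\leq 0$, so $(q-\lambda)^+=0$ with carrier $0$, giving $p_{q,\lambda}=1$. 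I would verify these using the definition $p_{a,\lambda}=1-((a-\lambda)^+)\dg$ together with the basic facts about carriers recalled in Section \ref{sc:SAs} (in particular that the carrier of a nonzero scalar multiple of a projection is that projection, and the carrier of an invertible element is $1$); Lemma \ref{le:SpecResCarrier} is essentially the $\lambda=0$ instance.

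Next I would do the same for $r\in P$, obtaining $p_{r,\lambda}=0$ for $\lambda<0$, $p_{r,\lambda}=r^\perp$ for $0\leq\lambda<1$, and $p_{r,\lambda}=1$ for $\lambda\geq 1$. Then the condition $q\leq_s r$, i.e.\ $p_{r,\lambda}\leq p_{q,\lambda}$ for all $\lambda\in\reals$, becomes vacuous for $\lambda<0$ and $\lambda\geq 1$, and for $0\leq\lambda<1$ it reduces to the single inequality $r^\perp\leq q^\perp$. By the orthocomplementation properties of the OML $P$ recorded in Section \ref{sc:SAs}, $r^\perp\leq q^\perp\Leftrightarrow q\leq r$. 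This gives both implications at once.

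The main (minor) obstacle is bookkeeping around the degenerate cases $q=0$ and $q=1$, where $L_q$ and $U_q$ collapse; I would note that the formulas for $p_{q,\lambda}$ given above still hold verbatim with $q^\perp$ interpreted as $1$ or $0$ respectively, so the argument goes through uniformly, or alternatively I would just remark that $0\leq_s b$ and $a\leq_s 1$ always hold and check these separately. No deep input is needed beyond the carrier arithmetic from \cite{Fsa} already quoted in the excerpt; the whole proof is a short explicit computation of two spectral resolutions followed by the OML identity $r^\perp\leq q^\perp\Leftrightarrow q\leq r$.
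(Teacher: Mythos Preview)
Your proposal is correct and is essentially the same as the paper's proof: the paper likewise computes the spectral resolution of a projection $q$ to be $p_{q,\lambda}=0,\ 1-q,\ 1$ according as $\lambda<0,\ 0\leq\lambda<1,\ \lambda\geq1$, and then concludes from $q\leq r\Leftrightarrow 1-r\leq 1-q$. You supply more detail on the carrier computations and on the degenerate cases, but the argument is identical in substance.
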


\begin{proof} The spectral resolution for $q\in P$ is
\[
p_{q,\lambda}=\left\{\begin{array}{lll}
                      0 & \,\mbox{if $\lambda < 0$}\\
                      1-q  & \mbox{ if $0\leq \lambda <1$}\\
                      1 &    \mbox{ if $1\leq \lambda$}
                       \end{array}
                       \right.
                       \]
Thus the desired result follows from the fact that $q\leq r$ iff $1-r\leq 1-q$.
\end{proof}

\begin{lemma} \label{le:alphaa+beta}
Let $\alpha, \beta\in\reals$ with $0<\alpha$ and let $a\in A$. Then, for all $\lambda
\in\reals${\rm: (i)} $p\sb{\alpha a,\lambda}=p\sb{a,\alpha\sp{-1}\lambda}$. {\rm (ii)}
$p\sb{a+\beta,\lambda}=p\sb{a,\lambda-\beta}$. {\rm(iii)} $p\sb{\alpha a+\beta,\lambda}
=p\sb{a,\alpha\sp{-1}(\lambda-\beta)}$.
\end{lemma}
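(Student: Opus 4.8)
The plan is to prove the three identities by reducing everything to the definition of the spectral resolution, namely $p_{c,\lambda} = 1 - ((c-\lambda)^+)\dg$, and then tracking how the operations $c \mapsto \alpha c$ and $c \mapsto c+\beta$ interact with the positive-part map and the carrier map. Part (iii) is an immediate consequence of (i) and (ii): applying (ii) to $a$ and $\beta$ gives $p_{a+\beta,\mu} = p_{a,\mu-\beta}$, and then applying (i) to the element $a+\beta$ with scalar $\alpha$ gives $p_{\alpha a + \beta,\lambda} = p_{a+\beta,\alpha^{-1}\lambda} = p_{a,\alpha^{-1}\lambda - \beta} = p_{a,\alpha^{-1}(\lambda-\beta)}$, using $\alpha^{-1}\lambda - \beta = \alpha^{-1}(\lambda - \beta) + (\alpha^{-1}-1)(-\beta)$—wait, that is not the wanted form, so instead I would apply (i) first and then (ii): $p_{\alpha a + \beta,\lambda} = p_{\alpha a, \lambda - \beta} = p_{a, \alpha^{-1}(\lambda-\beta)}$, which is exactly (iii). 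So the real content is in (i) and (ii).

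For part (ii), the key observation is that $(a+\beta) - \lambda = a - (\lambda - \beta)$ as elements of $A$ (recall scalars are identified with scalar multiples of $1$), so $((a+\beta)-\lambda)^+ = (a - (\lambda-\beta))^+$, and hence their carriers agree: $(((a+\beta)-\lambda)^+)\dg = ((a-(\lambda-\beta))^+)\dg$. Subtracting from $1$ yields $p_{a+\beta,\lambda} = p_{a,\lambda-\beta}$. This is essentially a one-line computation once one unwinds the definition.

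For part (i), with $\alpha > 0$, I would use that $\alpha a - \lambda = \alpha(a - \alpha^{-1}\lambda)$, so $(\alpha a - \lambda)^+ = (\alpha(a-\alpha^{-1}\lambda))^+ = \alpha\,(a - \alpha^{-1}\lambda)^+$; here the fact that positive scalar multiplication commutes with the positive-part operation follows because $|\alpha c| = \alpha|c|$ for $\alpha>0$ (from $(\alpha c)^2 = \alpha^2 c^2$ and uniqueness of square roots, so $((\alpha c)^2)^{1/2} = \alpha (c^2)^{1/2}$), whence $(\alpha c)^+ = \tfrac12(|\alpha c| + \alpha c) = \alpha\,c^+$. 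Then I would invoke the general fact that for $0\leq b\in A$ and $\alpha > 0$ one has $(\alpha b)\dg = b\dg$, which holds because $b$ and $\alpha b$ have the same annihilators: $\alpha b \cdot x = 0 \Leftrightarrow b x = 0$ for all $x \in A$ (as $\alpha \neq 0$), and the carrier is determined by the annihilator via SA6. Applying this with $b = (a-\alpha^{-1}\lambda)^+$ gives $((\alpha a - \lambda)^+)\dg = (\alpha (a-\alpha^{-1}\lambda)^+)\dg = ((a-\alpha^{-1}\lambda)^+)\dg$, and subtracting from $1$ yields $p_{\alpha a,\lambda} = p_{a,\alpha^{-1}\lambda}$.

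There is no serious obstacle here; the only points requiring a little care are the two auxiliary facts that $(\alpha c)^+ = \alpha c^+$ for $\alpha > 0$ and that $(\alpha b)\dg = b\dg$ for $\alpha \neq 0$, both of which I would either cite from \cite{Fsa} or dispatch in a sentence using the uniqueness clauses already quoted in Section \ref{sc:SAs} (uniqueness of square roots for the former, uniqueness of the carrier together with SA6 for the latter). Everything else is formal manipulation with the definition $p_{c,\lambda} = 1 - ((c-\lambda)^+)\dg$. The proof should be only a few lines in total, with (iii) following mechanically from (i) and (ii).
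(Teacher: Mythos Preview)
Your proposal is correct and follows essentially the same route as the paper: for (i) you use $(\alpha c)^{+}=\alpha c^{+}$ and $(\alpha b)\dg=b\dg$ exactly as the paper does, (ii) is the same one-line identity $(a+\beta)-\lambda=a-(\lambda-\beta)$, and (iii) is obtained by composing (i) and (ii). The only slip is that in your final derivation of (iii) you label the steps ``(i) first and then (ii)'' while in fact applying (ii) first and then (i); the computation itself, $p_{\alpha a+\beta,\lambda}=p_{\alpha a,\lambda-\beta}=p_{a,\alpha^{-1}(\lambda-\beta)}$, is correct.
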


\begin{proof}
To prove (i), we begin by noting that, for $0<\alpha$, $(\alpha a)\sp{+}=\frac12(|\alpha a|
+\alpha a)=\alpha(\frac12(|a|+a))=\alpha a\sp{+}$, from which it follows that, for all $\lambda
\in\reals$, $(\alpha a-\lambda)\sp{+}=(\alpha(a-\alpha\sp{-1}\lambda))\sp{+}=\alpha(a-\alpha\sp{-1}
\lambda)\sp{+}$. Also, for all $a\in A$, $(\alpha a)\dg=a\dg$, whence
\[
p\sb{\alpha a,\lambda}=(((\alpha a-\lambda)\sp{+})\dg)\sp{\perp}=((\alpha(a-\alpha\sp{-1}\lambda)\sp{+})
\dg)\sp{\perp}=(((a-\alpha\sp{-1}\lambda)\sp{+})\dg)\sp{\perp}=p\sb{a,\alpha\sp{-1}\lambda}.
\]
The proof of (ii) is obvious, and (iii) follows from (i) and (ii).
\end{proof}

\begin{lemma}\label{le:E} \rm{(\cite[Lemma 2.1]{dG})}
Let $\alpha, \beta\in\reals,\ \alpha>0$. Then for $a,b\in A${\rm:}
\[
{\rm(i)\ } a\leq b\Leftrightarrow\alpha a +\beta\leq\alpha  b+\beta.\ \  {\rm(ii)\ } a\leq_s b\Leftrightarrow
 \alpha a +\beta\leq_s\alpha b +\beta.
\]
\end{lemma}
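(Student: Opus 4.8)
The statement is Lemma~\ref{le:E}, an affine-invariance fact for both the synaptic order and the spectral order. The plan is to prove part (i) by elementary properties of the partially ordered linear space $A$, and part (ii) by reducing to the behaviour of spectral resolutions under affine transformations, which has already been recorded in Lemma~\ref{le:alphaa+beta}.

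For part (i), I would argue directly. Since $A$ is a real linear space, the implication $a\le b\Rightarrow \alpha a+\beta\le\alpha b+\beta$ is immediate: multiply the inequality $b-a\ge 0$ by the positive scalar $\alpha$ (the positive cone $A^+$ is closed under multiplication by nonnegative reals by SA1) to get $\alpha b-\alpha a\ge 0$, then add $\beta 1$ to both sides (translations preserve $\le$ in a partially ordered linear space). For the converse, apply the forward direction with the substitutions $a\rightsquigarrow \alpha a+\beta$, $b\rightsquigarrow \alpha b+\beta$, the positive scalar $\alpha^{-1}$, and the real number $-\alpha^{-1}\beta$; this sends $\alpha a+\beta$ back to $a$ and $\alpha b+\beta$ back to $b$. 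Hence the equivalence.

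For part (ii), I would use Definition~\ref{de:specord} together with Lemma~\ref{le:alphaa+beta}(iii): for every $\lambda\in\reals$ we have $p_{\alpha a+\beta,\lambda}=p_{a,\alpha^{-1}(\lambda-\beta)}$ and likewise $p_{\alpha b+\beta,\lambda}=p_{b,\alpha^{-1}(\lambda-\beta)}$. Now $\alpha a+\beta\le_s \alpha b+\beta$ means $p_{\alpha b+\beta,\lambda}\le p_{\alpha a+\beta,\lambda}$ for all $\lambda$, which by the two identities is equivalent to $p_{b,\alpha^{-1}(\lambda-\beta)}\le p_{a,\alpha^{-1}(\lambda-\beta)}$ for all $\lambda$. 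As $\lambda$ ranges over $\reals$ and $\alpha>0$, the quantity $\mu:=\alpha^{-1}(\lambda-\beta)$ ranges over all of $\reals$ (the map $\lambda\mapsto\alpha^{-1}(\lambda-\beta)$ is a bijection of $\reals$ onto itself), so the last condition is equivalent to $p_{b,\mu}\le p_{a,\mu}$ for all $\mu\in\reals$, i.e.\ to $a\le_s b$. This gives the equivalence in (ii).

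There is no real obstacle here; the only point requiring a moment's care is the change-of-variable step in (ii), namely observing that $\lambda\mapsto\alpha^{-1}(\lambda-\beta)$ is a bijection of $\reals$ so that a condition holding ``for all $\lambda$'' transfers to one holding ``for all $\mu$''. Everything else is a direct appeal to the hypothesis $\alpha>0$, to the linear-space axiom SA1, and to the already-established Lemma~\ref{le:alphaa+beta}. I would keep the write-up to a few lines, explicitly citing Lemma~\ref{le:alphaa+beta}(iii) for the spectral-resolution identities.
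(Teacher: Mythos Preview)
Your proposal is correct and follows essentially the same approach as the paper: part (i) is treated as evident from the linear-space structure, and part (ii) is deduced directly from Lemma~\ref{le:alphaa+beta}(iii) via the change of variable $\mu=\alpha^{-1}(\lambda-\beta)$. The paper's proof is just a terser version of what you wrote.
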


\begin{proof} (i) is evident.  By Lemma \ref{le:alphaa+beta} (iii) $p\sb{\alpha a+\beta,\lambda}=
p\sb{a,\alpha\sp{-1}(\lambda-\beta)}$, and likewise $p\sb{\alpha b+\beta,\lambda}=p\sb{b,\alpha\sp{-1}
(\lambda-\beta)}$, from which (ii) follows immediately.
\end{proof}

\begin{remark} \label{rm:E}
In view of Lemma \ref{le:E}, for many purposes, it suffices to study the synaptic and spectral order on
elements of $E$. Indeed, suppose that $a\in A$. Then $-\|a\|\leq a\leq\|a\|$, and choosing any $n\in\Nat$ with
$\|a\|\leq n$, we have $-n\leq a\leq n$, which implies $\frac{1}{2n}a+\frac12\in E$. Thus, given $b\in A$,
we can choose $n\in\Nat$ with $\|a\|, \|b\|\leq n$, whereupon  $e:=\frac{1}{2n}a+\frac12\in E$, $f:=\frac
{1}{2n}b+\frac12\in E$ and $a\leq b$ (respectively, $a\leq\sb{s}b$) iff $e\leq f$ (respectively, iff $e\leq
\sb{s}f$).
\end{remark}

\begin{lemma} \label{le:esubn}
Let $e\in E$ and for each $n\in\Nat$, let $e\sb{n}:=\sum\sb{k=1}\sp{n}\frac{k}{n}(p\sb{e,\frac{k}{n}}-
p\sb{e,\frac{k-1}{n}})$. Then {\rm(i)} $\lim\sb{n\rightarrow\infty}e\sb{n}=e$ and {\rm(ii)} $e\sb{n}=
1-\frac1n\sum\sb{k=0}\sp{n-1}p\sb{e,\frac{k}{n}}$.
\end{lemma}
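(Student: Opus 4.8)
The plan is to establish (ii) first by a direct algebraic manipulation of the defining sum, and then deduce (i) from (ii) together with the norm-convergence of the Riemann--Stieltjes sums for the spectral integral. For (ii), I would start from
$e\sb{n}=\sum\sb{k=1}\sp{n}\frac{k}{n}(p\sb{e,\frac{k}{n}}-p\sb{e,\frac{k-1}{n}})$
and apply summation by parts (Abel summation). Writing $q\sb k:=p\sb{e,\frac{k}{n}}$, the sum is $\frac1n\sum\sb{k=1}\sp{n}k(q\sb k-q\sb{k-1})$; Abel summation turns this into $\frac1n\bigl(n\,q\sb n-\sum\sb{k=1}\sp{n-1}q\sb k - 0\cdot q\sb 0\bigr)$, i.e. $q\sb n-\frac1n\sum\sb{k=1}\sp{n-1}q\sb k$. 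Since $0\le e\le 1$, Theorem \ref{th:specresprops}(iii) with $U\sb e\le 1$ gives $q\sb n=p\sb{e,1}=1$, and re-indexing the remaining sum from $k=0$ to $n-1$ (the $k=0$ term $p\sb{e,0}$ being added while the $k=n$ term $p\sb{e,1}=1$ would need subtracting — care is needed here) yields $e\sb n=1-\frac1n\sum\sb{k=0}\sp{n-1}p\sb{e,\frac kn}$ after bookkeeping the boundary terms. This bookkeeping is the one spot demanding attention: one must check that the $p\sb{e,1}$ versus $p\sb{e,0}$ swap is exactly absorbed because $p\sb{e,1}=1=q\sb n$.

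For (i), I would invoke the spectral integral representation from Section \ref{sc:SAs}: since $0\le e\le 1$ we have $L\sb e\ge 0$ and $U\sb e\le 1$, so
\[
e=\int\sb{0-0}\sp{1}\lambda\, dp\sb{e,\lambda},
\]
with Riemann--Stieltjes-type sums converging to $e$ in norm. The element $e\sb n$ is precisely the upper Riemann--Stieltjes sum for the uniform partition $0=\frac0n<\frac1n<\cdots<\frac nn=1$, with tag $\frac kn$ on the subinterval $(\frac{k-1}{n},\frac kn]$. Hence $\|e-e\sb n\|\to 0$ follows directly from the stated norm-convergence of these sums. Alternatively, and perhaps more self-containedly, one can bound $\|e-e\sb n\|$ directly: from Theorem \ref{th:specresprops}(i), on the ``slice'' $p\sb{e,\frac kn}-p\sb{e,\frac{k-1}{n}}$ the element $e$ is squeezed between $\frac{k-1}{n}$ and $\frac kn$ (in the compressed algebra), so $e\sb n-e$ is a sum of mutually orthogonal pieces each of norm at most $\frac1n$, giving $\|e-e\sb n\|\le\frac1n$.

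I expect the main obstacle to be purely notational rather than conceptual: getting the Abel-summation boundary terms in (ii) to land exactly on the clean formula $1-\frac1n\sum\sb{k=0}\sp{n-1}p\sb{e,\frac kn}$, and making sure the endpoint conventions ($p\sb{e,0}$ may be nonzero, $p\sb{e,1}=1$) are handled consistently with Theorem \ref{th:specresprops}(iii),(iv). Once (ii) is in hand, (i) is immediate from whichever route one prefers. I would present (ii) first, then (i), since the explicit formula in (ii) makes the orthogonality-of-slices estimate in (i) especially transparent.
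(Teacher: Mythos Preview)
Your plan is essentially the paper's plan, with the order of (i) and (ii) reversed. For (ii) the paper does exactly what you do---split the sum, shift indices, and use $p_{e,1}=1$---so the content is identical. One correction: your Abel summation slips at the bottom boundary. The identity is
\[
\sum_{k=1}^{n}k(q_k-q_{k-1})=nq_n-\sum_{k=0}^{n-1}q_k,
\]
so the $q_0$ term carries coefficient $-1$, not $0$; with this fix and $q_n=p_{e,1}=1$ (Theorem~\ref{th:specresprops}(iii)) the formula drops out immediately, with no further ``re-indexing'' needed.

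For (i) the paper takes your first route (Riemann--Stieltjes sums), but is careful in a way your sketch is not: it extends the partition leftward to $[-\tfrac{1}{n},1]$, sets $\gamma_1=0$ on the extra subinterval $[-\tfrac{1}{n},0]$, and then invokes \cite[Theorem~3.1 and Remark~3.1]{FPSROUS} to get norm convergence. The extension matters because the integral is $\int_{L_e-0}^{U_e}$ and one must capture the possible jump at $0$; your partition $0<\tfrac{1}{n}<\cdots<1$ by itself does not obviously do this. Your second route---using Theorem~\ref{th:specresprops}(i) to pin $r_k e$ between $\tfrac{k-1}{n}r_k$ and $\tfrac{k}{n}r_k$ for $r_k:=p_{e,k/n}-p_{e,(k-1)/n}$, then summing to get $0\le e_n-e\le\tfrac{1}{n}$---is a genuinely different and more self-contained argument that avoids the external citation altogether; if you go that way, note that it actually yields the sharper conclusion $e\le e_n\le e+\tfrac{1}{n}$.
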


\begin{proof}
For each $n\in\Nat$ we form the partition $\lambda\sb{0}<\lambda\sb{1}<\cdots<\lambda\sb{n}
<\lambda\sb{n+1}$ of the closed interval $[-\frac{1}{n},1]$ given by $\lambda\sb{i}:=(i-1)/n$
for $i=0,1,2,...,n,n+1$. Since $e\in E$, we have $\lambda\sb{0}<0\leq L\sb{e}\leq U\sb{e}
\leq 1=\lambda\sb{n+1}$. Put $\gamma\sb{1}:=0$ and for $1<i\leq n+1$ put $\gamma\sb{i}:=
\lambda\sb{i}=(i-1)/n$. Then $\lambda\sb{i-1}\leq\gamma\sb{i}\leq\lambda\sb{i}$ for $i=
1,2,...,n,n+1$, and we have
\[
\sum\sb{i=1}\sp{n+1}\gamma\sb{i}\left(p\sb{e,\lambda\sb{i}}-p\sb{e,\lambda\sb{i-1}}\right)
=\sum\sb{i=2}\sp{n+1}\gamma\sb{i}\left(p\sb{e,\lambda\sb{i}}-p\sb{e,\lambda\sb{i-1}}\right)=
\]
\[
\sum\sb{k=1}\sp{n}\lambda\sb{k+1}\left(p\sb{e,\lambda\sb{k+1}}-p\sb{e,\lambda\sb{k}}\right)
=\sum\sb{k=1}\sp{n}\frac{k}{n}\left(p\sb{e,\frac{k}{n}}-p\sb{e,\frac{k-1}{n}}\right)=e\sb{n}.
\]
Therefore, by \cite[Theorem 3.1]{FPSROUS} and \cite[Remark 3.1]{FPSROUS} (which takes care of
the case $U\sb{e}=1$), we have $\lim\sb{n\rightarrow\infty}e\sb{n}=e$, proving (i).

Also,
\[
e\sb{n}:=\sum\sb{k=1}\sp{n}\frac{k}{n}\left(p\sb{e,\frac{k}{n}}-p\sb{e,\frac{k-1}{n}}\right)
=\sum\sb{k=1}\sp{n}\frac{k}{n}p\sb{e,\frac{k}{n}}-\sum\sb{k=0}\sp{n-1}\frac{k+1}{n}p\sb{e,
\frac{k}{n}}=
\]
\[
p\sb{e,1}+\sum\sb{k=1}\sp{n-1}\frac{k}{n}p\sb{e,\frac{k}{n}}-\sum\sb{k=1}\sp{n-1}\frac{k+1}{n}p\sb{e,
\frac{k}{n}}-\frac1n p\sb{e,0}
\]
\[
=p\sb{e,1}-\sum\sb{k=1}\sp{n-1}\frac{1}{n}p\sb{e,\frac{k}{n}}-\frac{1}{n}p\sb{e,0}=1-\frac1n\sum
\sb{k=0}\sp{n-1}p\sb{e,\frac{k}{n}}.
\]
proving (ii).
\end{proof}

\begin{theorem}\label{th:le-les} For all $a ,b\in A$, $a\leq_s b\, \Rightarrow \, a\leq b$.
\end{theorem}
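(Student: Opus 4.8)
The plan is to reduce to effects via Remark~\ref{rm:E}, then approximate by the step functions $e_n$ of Lemma~\ref{le:esubn}. First I would observe that the implication is invariant under the affine substitution $a\mapsto \alpha a+\beta$ ($\alpha>0$): by Lemma~\ref{le:E}(i) the right-hand side $a\leq b$ transforms correctly, and by Lemma~\ref{le:E}(ii) so does the left-hand side $a\leq_s b$. Hence, choosing $n\in\Nat$ with $\|a\|,\|b\|\leq n$ as in Remark~\ref{rm:E}, it suffices to prove $e\leq_s f\Rightarrow e\leq f$ for $e,f\in E$.

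So assume $e,f\in E$ with $e\leq_s f$, i.e.\ $p_{f,\lambda}\leq p_{e,\lambda}$ for all $\lambda\in\reals$. Apply the formula in Lemma~\ref{le:esubn}(ii) at each $n\in\Nat$: writing $e_n=1-\tfrac1n\sum_{k=0}^{n-1}p_{e,k/n}$ and $f_n=1-\tfrac1n\sum_{k=0}^{n-1}p_{f,k/n}$, the hypothesis $p_{f,k/n}\leq p_{e,k/n}$ for each $k=0,1,\dots,n-1$ gives $\sum_{k=0}^{n-1}p_{f,k/n}\geq\sum_{k=0}^{n-1}p_{e,k/n}$ in the synaptic order, whence
\[
e_n = 1-\frac1n\sum_{k=0}^{n-1}p_{e,k/n}\ \leq\ 1-\frac1n\sum_{k=0}^{n-1}p_{f,k/n} = f_n .
\]
Now let $n\to\infty$. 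By Lemma~\ref{le:esubn}(i), $\|e_n-e\|\to 0$ and $\|f_n-f\|\to 0$; since $e_n\leq f_n$ for every $n$, i.e.\ $f_n-e_n\in A^+$, and $A^+$ is norm closed in $A$ (it is the positive cone of an order-unit normed space, SA1), taking the limit yields $f-e\in A^+$, that is $e\leq f$. Undoing the affine substitution gives $a\leq b$.

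I expect the only genuine subtlety is making sure the step elements $e_n$, $f_n$ are handled at the \emph{same} partition points $k/n$, so that the termwise inequality $p_{f,k/n}\leq p_{e,k/n}$ transfers directly to the sums; this is exactly why the explicit form in Lemma~\ref{le:esubn}(ii) is convenient, and it is why one works inside $E$ (so that all the relevant $\lambda=k/n$ lie in the interval $[-1/n,1]$ where the approximation of Lemma~\ref{le:esubn} is valid). The norm-closedness of $A^+$, used to pass the inequality to the limit, is the one external fact invoked, and it is immediate from SA1. Everything else is a routine manipulation of the formulas already established.
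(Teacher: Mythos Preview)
Your proof is correct and follows essentially the same approach as the paper: reduce to effects via Remark~\ref{rm:E}, use the explicit step approximations $e_n,f_n$ from Lemma~\ref{le:esubn}(ii), deduce $e_n\leq f_n$ termwise from the spectral order hypothesis, and pass to the limit using norm-closedness of $A^+$ (the paper cites \cite[Theorem~4.7(iii)]{Fsa} for this last step, which amounts to the same fact you invoke from SA1).
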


\begin{proof}
Let $e,f\in E$ with $e\leq\sb{s}f$. By Remark \ref{rm:E}, it will be enough to prove that
$e\leq f$. For each $n\in\Nat$, put $e\sb{n}:=1-\frac1n\sum\sb{k=0}\sp{n-1}p\sb{e,\frac{k}{n}}$
and $f\sb{n}:=1-\frac1n\sum\sb{k=0}\sp{n-1}p\sb{f,\frac{k}{n}}$. Then, since $e\leq\sb{s}f$,
we have $e\sb{n}\leq f\sb{n}$, i.e., $0\leq f\sb{n}-e\sb{n}$ for all $n\in\Nat$. By Lemma
\ref{le:esubn}, $\lim\sb{n\rightarrow\infty}(f\sb{n}-e\sb{n})=f-e$, and by \cite[Theorem
4.7 (iii)]{Fsa}, $0\leq f-e$, i.e., $e\leq f$.
\end{proof}

\begin{lemma} \label{le:inequality}
If $a,b\in A$, $aCb$, and $a\leq b$, then $a\sp{+}\leq b\sp{+}$.
\end{lemma}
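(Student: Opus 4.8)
The plan is to exploit commutativity: since $aCb$, both $a$ and $b$ lie in the commutative sub-synaptic algebra $CC(\{a,b\})$, on which we may use the functional representation theorem \cite[Theorem 4.1]{FPproj} cited in Section \ref{sc:SAs}. Under the synaptic isomorphism $\Psi$ onto a norm-dense subset $F\subseteq C(X,\reals)$, the order is pointwise, squares go to squares, and square roots go to (pointwise) square roots; hence the absolute value $|a|=(a^2)^{1/2}$ is carried to the pointwise absolute value $|\Psi(a)|$, and consequently $a^+=\frac12(|a|+a)$ is carried to the pointwise positive part $\Psi(a)^+$. The inequality then reduces to the elementary pointwise fact that if $g\le h$ are real-valued functions, then $g^+\le h^+$ (because $t\mapsto t^+=\max(t,0)$ is monotone on $\reals$). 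Since $\Psi$ is an order isomorphism onto its image, $a^+\le b^+$ follows.

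First I would verify that $a,b\in CC(\{a,b\})$, that this bicommutant is a commutative sub-synaptic algebra (stated in Section \ref{sc:SAs}), and that it is closed under square roots and carriers, so that $|a|, a^+, |b|, b^+$ are all computed inside it. Next I would invoke the functional representation: there is a Stone space $X$ of the projection Boolean algebra of $CC(\{a,b\})$ and a synaptic isomorphism $\Psi$ onto a norm-dense $F\subseteq C(X,\reals)$; a synaptic isomorphism preserves order, the unit, squares, and square roots (square roots because $\Psi$ maps the unique positive square root to the unique positive square root, which in $C(X,\reals)$ is the pointwise one). Then I would chase: $\Psi(|a|)=\Psi((a^2)^{1/2})=(\Psi(a)^2)^{1/2}=|\Psi(a)|$ pointwise, so $\Psi(a^+)=\frac12(|\Psi(a)|+\Psi(a))=\Psi(a)^+$, and similarly for $b$. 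From $a\le b$ we get $\Psi(a)\le\Psi(b)$ pointwise on $X$, hence $\Psi(a)^+\le\Psi(b)^+$ pointwise, i.e. $\Psi(a^+)\le\Psi(b^+)$, and applying $\Psi^{-1}$ gives $a^+\le b^+$.

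An alternative, representation-free route would work directly with spectral resolutions: for commuting $a,b$ the resolutions $(p_{a,\lambda})$ and $(p_{b,\lambda})$ commute pairwise, and one can try to show $a\le b$ forces $p_{b,\lambda}\le p_{a,\lambda}$ for $\lambda\ge 0$ (this is where commutativity is essential and where $a\le b$ does \emph{not} in general give the full spectral order), from which $p_{b^+,\lambda}=p_{b,\lambda}\le p_{a,\lambda}=p_{a^+,\lambda}$ for $\lambda>0$ while for $\lambda<0$ both $p_{a^+,\lambda}$ and $p_{b^+,\lambda}$ vanish — but this is essentially the same content dressed up spectrally and is more delicate at $\lambda=0$.

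The main obstacle is making the square-root (hence absolute-value) step rigorous: I must be sure that a synaptic isomorphism genuinely commutes with the square-root operation and that, in the commutative target $C(X,\reals)$, the unique positive square root guaranteed by SA5 coincides with the naive pointwise square root — this uses the uniqueness of positive square roots (\cite[Theorem 2.2]{Fsa}) together with the fact that $F$ is closed under square roots as a sub-synaptic algebra. Once that is pinned down, everything else is the trivial monotonicity of $t\mapsto t^+$ transported through the isomorphism.
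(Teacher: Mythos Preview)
Your proposal is correct and takes a genuinely different route from the paper's proof. The paper argues directly and algebraically: from $a\le b=b^{+}-b^{-}\le b^{+}$ (so $a\le b^{+}$) it introduces the carrier projection $p:=(a^{+})\dg$, observes $a^{+}=pa$ and that $p$ commutes with $b^{+}$ (since $b^{+}\in CC(b)\subseteq C(a)$ and $p\in CC(a)$), and then uses the basic fact that the product of two commuting positive elements is positive (\cite[Lemma 1.5]{Fsa}) twice to get $a^{+}=pa\le pb^{+}\le b^{+}$. Your approach instead passes to the commutative sub-synaptic algebra $CC(\{a,b\})$ and invokes the functional representation \cite[Theorem 4.1]{FPproj} to reduce everything to the pointwise monotonicity of $t\mapsto t^{+}$ on $\reals$. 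Your handling of the one genuine subtlety---that the synaptic square root transported by $\Psi$ coincides with the pointwise square root in $C(X,\reals)$---is sound, since uniqueness of positive square roots (\cite[Theorem 2.2]{Fsa}) forces $\Psi(c^{1/2})$ to agree with the pointwise $\sqrt{\Psi(c)}$. The paper's argument is lighter in prerequisites (it avoids the representation theorem entirely and stays within the basic carrier/positivity toolkit of \cite{Fsa}), while yours is conceptually transparent and makes clear that the lemma is really a statement about commutative order-unit algebras. Note that your suggested ``alternative, representation-free route'' via spectral resolutions would essentially amount to proving Theorem \ref{th:commut}, for which the paper \emph{uses} the present lemma, so that variant would be circular in this context.
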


\begin{proof}
Assume the hypotheses of the lemma. Then, since $0\leq b\sp{-}$, we have $a\leq b=b\sp{+}-b\sp{-}
\leq b\sp{+}$. Also, since $b\sp{+}\in CC(b)$ and $bCa$, we have $b\sp{+}Ca$. Put $p:=(a\sp{+})\dg$.
Then by parts (i) and (iii) of \cite[Theorem 3.3]{Fsa}, $a\sp{+}=pa$ and $p\in CC(a)$, whence $pCa$,
$pCb\sp{+}$, and $(1-p)Cb\sp{+}$. Now, $0\leq p$, $0\leq b\sp{+}-a$, and $pC(b\sp{+}-a)$, so by \cite
[Lemma 1.5]{Fsa}. $0\leq p(b\sp{+}-a)=pb\sp{+}-pa$, i.e., $a\sp{+}=pa\leq pb\sp{+}$. Likewise, by
\cite[Lemma 1.5]{Fsa} $0\leq(1-p)b\sp{+}=b\sp{+}-pb\sp{+}$, i.e., $pb\sp{+}\leq b\sp{+}$, and it
follows that $a\sp{+}\leq b\sp{+}$.
\end{proof}

\begin{theorem}\label{th:commut} If $a,b\in A$ and $aCb$, then $a\leq b\Leftrightarrow a\leq_s b$.
\end{theorem}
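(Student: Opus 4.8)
The plan is to prove the equivalence $a\leq b\Leftrightarrow a\leq_s b$ under the hypothesis $aCb$.  One direction, $a\leq_s b\Rightarrow a\leq b$, is already furnished by Theorem \ref{th:le-les} and requires no commutativity, so the work lies entirely in showing $a\leq b\Rightarrow a\leq_s b$ when $a$ and $b$ commute.  By Remark \ref{rm:E} it suffices to treat the case $e,f\in E$ with $eCf$ and $e\leq f$, and we must conclude $p_{f,\lambda}\leq p_{e,\lambda}$ for every $\lambda\in\reals$.

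First I would fix $\lambda\in\reals$ and reduce to the behavior of the positive parts.  Since $eCf$ implies $(e-\lambda)C(f-\lambda)$, and since $e\leq f$ gives $e-\lambda\leq f-\lambda$, Lemma \ref{le:inequality} yields $(e-\lambda)^+\leq(f-\lambda)^+$; moreover these two positive parts commute, because $(e-\lambda)^+\in CC(e-\lambda)$ and $(f-\lambda)^+\in CC(f-\lambda)$ and $eCf$.  The next step is the crucial one: I claim that for commuting $0\leq u\leq v$ in $A$ one has $u^{\text{\rm o}}\leq v^{\text{\rm o}}$, i.e.\ carriers are monotone on commuting pairs of positive elements.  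Granting this, apply it to $u=(e-\lambda)^+\leq v=(f-\lambda)^+$ to get $((e-\lambda)^+)^{\text{\rm o}}\leq((f-\lambda)^+)^{\text{\rm o}}$; taking orthocomplements reverses the inequality, and by the definition $p_{a,\lambda}=(((a-\lambda)^+)^{\text{\rm o}})^{\perp}$ this is exactly $p_{f,\lambda}\leq p_{e,\lambda}$, which is what Definition \ref{de:specord} demands.

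So the heart of the argument is the carrier-monotonicity claim: if $0\leq u\leq v$ and $uCv$, then $u^{\text{\rm o}}\leq v^{\text{\rm o}}$.  To prove it, recall from the basic properties in Section \ref{sc:SAs} that $v^{\text{\rm o}}\in CC(v)$, so $v^{\text{\rm o}}Cu$; also $v=v v^{\text{\rm o}}=v^{\text{\rm o}}v$.  Put $q:=v^{\text{\rm o}}$ and consider $(1-q)u(1-q)$.  Since $0\leq u\leq v$ and the quadratic map $x\mapsto (1-q)x(1-q)$ is order preserving (the strengthened form of SA3 noted after SA8), we get $0\leq(1-q)u(1-q)\leq(1-q)v(1-q)$.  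But $(1-q)v=v-qv=v-v=0$ since $q=v^{\text{\rm o}}$ and $v=qv$; hence $(1-q)v(1-q)=0$, forcing $(1-q)u(1-q)=0$.  By SA4 (with $b=1-q\geq0$), $u(1-q)=0$, i.e.\ $u=uq$; since $u^{\text{\rm o}}$ is the smallest projection $r$ with $u=ur$, we conclude $u^{\text{\rm o}}\leq q=v^{\text{\rm o}}$, as desired.  (Commutativity is used only to ensure $q$ commutes with the relevant elements so that these manipulations stay inside $A$ and the order-theoretic facts apply cleanly; in fact the carrier-monotonicity argument just given does not even need $uCv$, but it is harmless to assume it.)

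The main obstacle I anticipate is precisely pinning down this carrier-monotonicity step with the right axioms: one must be careful that $(1-q)u(1-q)=0\Rightarrow u(1-q)=0$ is a legitimate invocation of SA4, that $v=v v^{\text{\rm o}}$ holds (it does, being one of the defining properties of the carrier), and that order-preservation of the quadratic map is available (it is, via \cite[Theorem 4.2]{Fsa}).  Once that lemma is in hand, assembling the proof is routine: fix $\lambda$, pass to positive parts via Lemma \ref{le:inequality}, apply carrier monotonicity, orthocomplement, and quote Definition \ref{de:specord}; then combine with Theorem \ref{th:le-les} for the converse.  A stylistic alternative would be to phrase the whole thing inside a single C-block containing $a$ and $b$ and invoke the commutative functional representation \cite[Theorem 4.1]{FPproj}, where everything reduces to the elementary fact that for continuous functions $\phi\leq\psi$ the level sets satisfy $\{\psi>\lambda\}\subseteq\{\phi>\lambda\}$; I would mention this as a sanity check but prefer the direct algebraic route above since it avoids dragging in the representation machinery.
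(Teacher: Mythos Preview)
Your proposal is correct and follows essentially the same route as the paper: reduce one direction to Theorem~\ref{th:le-les}, then for the other apply Lemma~\ref{le:inequality} to get $(a-\lambda)^{+}\leq(b-\lambda)^{+}$, invoke carrier monotonicity $0\leq u\leq v\Rightarrow u\dg\leq v\dg$, and orthocomplement. The only differences are cosmetic: the paper works directly with $a,b$ (your reduction to $E$ via Remark~\ref{rm:E} is harmless but unnecessary) and cites carrier monotonicity from \cite[Theorem~2.10\,(viii)]{Fsa} rather than proving it inline---your inline argument is fine, though in the SA4 step the roles should read $a=1-q$, $b=u\in A^{+}$ (not $b=1-q$), which does not affect the conclusion.
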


\begin{proof}
Assume that $a,b\in A$ and $aCb$. By Theorem \ref{th:le-les}, we only need to prove that
$a\leq b\Rightarrow a\leq_s b$, so assume that $a\leq b$ and $\lambda\in\reals$. Then
$a-\lambda\leq b-\lambda$, whence $0\leq (a-\lambda)\sp{+}\leq(b-\lambda)\sp{+}$ by Lemma
\ref{le:inequality}. Therefore, by \cite[Theorem 2.10 (viii)]{Fsa}, $((a-\lambda)\sp{+})\dg
\leq((b-\lambda)\sp{+})\dg$, whence $p\sb{b,\lambda}=1-((b-\lambda)\sp{+})\dg\leq 1-((a-
\lambda)\sp{+})\dg=p\sb{a,\lambda}$, and it follows that $a\leq\sb{s}b$.
\end{proof}

\begin{corollary}\label{co:oneproj} Let $e,f\in E$ and $e\in P$ or $f\in P$. Then $e\leq f\,
\Leftrightarrow\, e\leq\sb{s}f$.
\end{corollary}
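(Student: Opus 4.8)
The plan is to deduce Corollary \ref{co:oneproj} from Theorem \ref{th:commut} together with the already-established facts about how projections interact with arbitrary effects. The only implication that needs work is $e\leq f\Rightarrow e\leq\sb{s}f$, since the reverse is immediate from Theorem \ref{th:le-les}; so assume $e\leq f$ with at least one of $e,f$ a projection, and the goal is to produce the commutation $eCf$, after which Theorem \ref{th:commut} finishes the argument.

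First I would treat the case $e\in P$. The excerpt records (just before the definition of the spectral order, citing \cite[Theorems 2.5 and 2.7]{FJPep}) that for a projection $p$ and an effect $g$ one has $p\leq g\Leftrightarrow p=pg\Leftrightarrow p=gp$. Applying this with $p=e$ and $g=f$, the hypothesis $e\leq f$ gives $ef=e=fe$, so in particular $ef=fe$, i.e.\ $eCf$. Then Theorem \ref{th:commut} yields $e\leq\sb{s}f$.

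Next I would treat the case $f\in P$. Here the same block of cited facts gives, for an effect $g$ and a projection $p$, $g\leq p\Leftrightarrow g=pg\Leftrightarrow g=gp$. Applying this with $g=e$ and $p=f$, the hypothesis $e\leq f$ gives $ef=e=fe$, again producing $eCf$, and Theorem \ref{th:commut} again gives $e\leq\sb{s}f$. Combining the two cases proves the corollary.

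I do not expect any real obstacle: the entire content is that comparability with a projection forces commutativity, and that fact is already available in the excerpt. The only point to be careful about is citing the correct direction of the projection/effect absorption identities ($p\leq e$ versus $e\leq p$), since both are used — one in each case — but both are listed in the passage surrounding \cite[Theorems 2.5 and 2.7]{FJPep}. One could also phrase the proof uniformly by noting that in either case $e$ and $f$ multiply to the smaller of the two, hence commute, but splitting into the two cases keeps the citations transparent.
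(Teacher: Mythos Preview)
Your proof is correct and follows essentially the same approach as the paper: in each case you use the projection/effect absorption identities (cited from \cite{FJPep}) to obtain $eCf$, and then invoke Theorem~\ref{th:commut}. The only cosmetic difference is that you separate out the implication $e\leq_s f\Rightarrow e\leq f$ via Theorem~\ref{th:le-les}, whereas the paper simply applies Theorem~\ref{th:commut} for the full equivalence once commutativity is established.
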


\begin{proof} Assume that $e\in P$, $f\in E$. Then $e\leq f$ iff $ef=fe=e$, hence $eCf$, and
apply Theorem \ref{th:commut}. If $e\in E, f\in P$, then $e\leq f$ iff $ef=fe=f$, and again
$eCf$.
\end{proof}

\begin{corollary} \label{le:Especorder}
$E=\{e\in A:0\leq\sb{s}e\leq\sb{s}1\}$.
\end{corollary}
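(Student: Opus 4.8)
The plan is to prove the set equality $E=\{e\in A:0\leq_s e\leq_s 1\}$ by double inclusion, exploiting the fact that both $0$ and $1$ are projections so that the previously established agreement of the two orders on pairs involving a projection does all the work.

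For the inclusion $E\subseteq\{e\in A:0\leq_s e\leq_s 1\}$, suppose $e\in E$, i.e. $0\leq e\leq 1$. Since $0\in P$, Corollary \ref{co:oneproj} applied to the pair $0,e$ gives $0\leq e\Leftrightarrow 0\leq_s e$, hence $0\leq_s e$. Since $1\in P$, Corollary \ref{co:oneproj} applied to the pair $e,1$ gives $e\leq 1\Leftrightarrow e\leq_s 1$, hence $e\leq_s 1$. Thus $0\leq_s e\leq_s 1$.

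For the reverse inclusion, suppose $a\in A$ with $0\leq_s a\leq_s 1$. By Theorem \ref{th:le-les}, $a\leq_s b\Rightarrow a\leq b$, so $0\leq_s a$ gives $0\leq a$ and $a\leq_s 1$ gives $a\leq 1$; hence $0\leq a\leq 1$, i.e. $a\in E$. (Alternatively one could again invoke Corollary \ref{co:oneproj}, but Theorem \ref{th:le-les} is the cleaner route here and avoids any circularity.) Combining the two inclusions yields the claimed equality.

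I do not anticipate any genuine obstacle: the statement is essentially a bookkeeping corollary that packages Corollary \ref{co:oneproj} and Theorem \ref{th:le-les}. The only point requiring a moment's care is to make sure we use the already-proven directions in the right sense — namely that we never need $a\leq b\Rightarrow a\leq_s b$ for non-commuting elements, which is false in general; for the nontrivial inclusion we only use the always-valid implication $\leq_s\,\Rightarrow\,\leq$, and for the easy inclusion we use the equivalence that holds precisely because $0$ and $1$ are projections.
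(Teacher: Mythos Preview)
Your proof is correct and follows essentially the same idea as the paper's. The paper's argument is slightly more streamlined: it simply observes that $0$ and $1$ commute with every $e\in A$, so Theorem~\ref{th:commut} gives the full biconditionals $0\leq e\Leftrightarrow 0\leq_s e$ and $e\leq 1\Leftrightarrow e\leq_s 1$ for all $e\in A$, which handles both inclusions at once without splitting into Corollary~\ref{co:oneproj} for one direction and Theorem~\ref{th:le-les} for the other.
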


\begin{proof}
If $e\in A$, then both $0$ and $1$ commute with $e$.
\end{proof}

\section{Lattice properties of the spectral order on a Banach synaptic algebra}

\emph{In this section, $A$ denotes a Banach SA.}

As we mentioned above, an SA is Banach (i.e., norm complete) iff it is isomorphic to the
self-adjoint part of a Rickart C$\sp{\ast}$-algebra \cite[Theorem 5.3]{FPba}; moreover the
set of projections in a Banach SA is a $\sigma$-complete OML \cite[Theorem 5.4]{FPba}.
Furthermore, if an SA is Banach, then it is isomorphic to the self-adjoint part of an
AW$\sp{\ast}$-algebra iff its set of projections is a complete OML \cite[Theorem 8.5]{FPba}.

The definitions and results in this section are versions for a Banach SA of corresponding
definitions and results for von Neumann algebras in \cite[\S 5]{KR} and \cite{dG}.

\begin{definition} {\cite[p. 311]{KR}} \label{de:resid} A family of projections $(p_{\lambda})_{\lambda
\in\reals}$ is a \emph{bounded resolution of identity} iff the following hold for $\lambda, \lambda
\sp{\prime}\in\reals$:
\begin{enumerate}
\item[(1)] There exists $0\leq K\in\reals$ such that $p_{\lambda}=0$ if $\lambda<-K$ and $p_{\lambda }=1$
if $K\leq\lambda$.
\item[(2)] $p_{\lambda}\leq p_{\lambda'}$ if $\lambda \leq \lambda'$.
\item[(3)] $p_{\lambda}=\bigwedge_{\lambda'>\lambda }p_{\lambda'}$.
\end{enumerate}
\end{definition}
Note that condition (2) in Definition \ref{de:resid} is redundant---it follows from condition (3).
Also note that, by condition (2), the projections in a bounded resolution of the identity are
mutually commutative.

With only slight and obvious modifications, the proofs of \cite[Theorems 5.2.3 and 5.2.4]{KR} work
in the Banach SA $A$, and we have the following.

\begin{theorem}\label{th:resid} If $(p_{\lambda})_{\lambda \in {\mathbb R}}$ is a bounded resolution of
identity in a Banach SA, then the Riemann-Stieltjes sums for $\int_{-K}^K \lambda dp_{\lambda}$ converge
in norm to an element $a=\int_{-K}^K \lambda dp_{\lambda}\in A$ such that $\|a\|\leq K$ and for which
$(p_{\lambda})_{\lambda\in\reals}$ is the spectral resolution of $a$.
\end{theorem}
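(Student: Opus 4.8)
The plan is to transcribe the Hilbert-space argument of \cite[Theorems 5.2.3 and 5.2.4]{KR}, replacing the subspace/range reasoning used there with carrier and bicommutant manipulations that make sense in a synaptic algebra. Throughout, the pervasive tool is the elementary estimate that if $q\sb 1,\dots,q\sb m\in P$ are pairwise orthogonal with $\sum\sb i q\sb i\leq 1$ and $c\sb 1,\dots,c\sb m\in\reals$, then $\|\sum\sb i c\sb iq\sb i\|\leq\max\sb i|c\sb i|$; this is immediate from $-\max\sb i|c\sb i|\leq\sum\sb ic\sb iq\sb i\leq\max\sb i|c\sb i|$ together with the order-unit norm property $-b\leq x\leq b\Rightarrow\|x\|\leq\|b\|$ recalled in Section \ref{sc:SAs}.

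\emph{Convergence and the norm bound.} For a tagged partition $-K=\lambda\sb 0<\cdots<\lambda\sb n=K$ with tags $\mu\sb i\in[\lambda\sb{i-1},\lambda\sb i]$, the increments $q\sb i:=p\sb{\lambda\sb i}-p\sb{\lambda\sb{i-1}}$ are pairwise orthogonal projections (Definition \ref{de:resid}(2)) summing to $p\sb K-p\sb{-K}=1$, so the Riemann--Stieltjes sum $S:=\sum\sb i\mu\sb iq\sb i$ satisfies $\|S\|\leq K$. Passing to a common refinement of two tagged partitions and applying the estimate to the differences of the tags on each subinterval shows that $\|S-S'\|$ is bounded by the meshes; hence the Riemann--Stieltjes sums form a Cauchy net, and since $A$ is Banach they converge in norm to an element $a\in A$ with $\|a\|\leq K$. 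Each $S$ lies in $C(p\sb\mu)$ for every $\mu$, and $C(p\sb\mu)$ is norm closed (SA8, cf.\ \cite[Theorem 9.11]{Fsa}), so $p\sb\mu Ca$ for all $\mu$; consequently every element appearing below commutes pairwise with the others, the relevant products stay in $A$, and the sign rule ``a positive times a commuting negative is negative'' (\cite[Lemma 1.5]{Fsa}) is available.

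\emph{The basic inequalities and the identification.} Fix $\lambda$ and use partitions having $\lambda$ as a subdivision point: multiplication by $p\sb\lambda$ (resp.\ $1-p\sb\lambda$) annihilates the increments to the right (resp.\ left) of $\lambda$ and fixes the others, giving $p\sb\lambda S\leq\lambda p\sb\lambda$ and $(1-p\sb\lambda)S\geq\lambda(1-p\sb\lambda)$ for all such $S$; since multiplication by the fixed projection $p\sb\lambda$ is norm continuous on $C(p\sb\lambda)$ and $A\sp+$ is norm closed, letting the mesh tend to $0$ yields $p\sb\lambda(a-\lambda)\leq 0\leq(1-p\sb\lambda)(a-\lambda)$. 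Now put $b:=a-\lambda$, $u:=(1-p\sb\lambda)b$, $v:=-p\sb\lambda b$; then $u,v\geq 0$, $uv=vu=0$ and $b=u-v$, so by the uniqueness of the positive/negative part decomposition (\cite[Theorem 3.3]{Fsa}) we get $(a-\lambda)\sp+=(1-p\sb\lambda)(a-\lambda)=(a-\lambda)\sp+(1-p\sb\lambda)$, whence $((a-\lambda)\sp+)\dg\leq 1-p\sb\lambda$ and therefore $p\sb\lambda\leq 1-((a-\lambda)\sp+)\dg=p\sb{a,\lambda}$. For the reverse inequality fix $\mu<\nu$ and set $q:=p\sb{a,\mu}(1-p\sb\nu)$ (a projection, since $p\sb{a,\mu}\in CC(a)$ commutes with $p\sb\nu\in C(a)$); from $q\leq p\sb{a,\mu}$ and Theorem \ref{th:specresprops}(i) we obtain $qa\leq\mu q$, while from $q\leq 1-p\sb\nu$ and the inequality just proved we obtain $qa\geq\nu q$, so $(\nu-\mu)q\leq 0$ with $\nu-\mu>0$ and $q\geq 0$, forcing $q=0$, i.e.\ $p\sb{a,\mu}\leq p\sb\nu$. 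Taking $\mu=\lambda$ and all $\nu>\lambda$ and invoking Definition \ref{de:resid}(3), $p\sb{a,\lambda}\leq\bigwedge\sb{\nu>\lambda}p\sb\nu=p\sb\lambda$. Combining the two inequalities gives $p\sb\lambda=p\sb{a,\lambda}$ for every $\lambda$, i.e.\ $(p\sb\lambda)\sb{\lambda\in\reals}$ is the spectral resolution of $a$.

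\emph{Main obstacle.} The delicate part is the last identification. Without a Hilbert space one cannot speak of ranges of projections, so the ``disjointness of the regions $a\leq\mu$ and $a\geq\nu$'' must be recast as the order implication $(\nu-\mu)q\leq 0\Rightarrow q=0$, and one must be careful that all the elements $q$, $a$, $p\sb{a,\mu}$, $p\sb\nu$ commute pairwise (this is why $p\sb{a,\mu}\in CC(a)$ and $p\sb\mu Ca$ are needed) so that the products lie in $A$ and the product sign rules apply; the role of the right-continuity condition (3) — needed to pass from $p\sb{a,\lambda}\leq p\sb\nu$ for all $\nu>\lambda$ to $p\sb{a,\lambda}\leq p\sb\lambda$ — is also easy to overlook. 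Everything else is the routine order/norm bookkeeping that makes the Riemann--Stieltjes net Cauchy.
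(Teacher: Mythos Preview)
Your proposal is correct and is precisely what the paper does: it merely states that ``with only slight and obvious modifications, the proofs of \cite[Theorems 5.2.3 and 5.2.4]{KR} work in the Banach SA $A$,'' and you have written out those modifications, replacing range arguments by carrier/bicommutant reasoning and using the order-unit norm estimate on orthogonal families of projections. The identification step via the uniqueness of the $b=b^+-b^-$ decomposition and the squeeze $(\nu-\mu)q\leq 0\Rightarrow q=0$, together with right-continuity (3), is exactly the intended adaptation.
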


In part (ii) of the following lemma, the infimum $\bigwedge_{\lambda<\mu\in\reals}(p_{\mu}\vee q_{\mu})$
coincides with the countable infimum $\bigwedge_{\lambda<\xi\in\ratnls}(p_{\xi}\vee q_{\xi})$, $\ratnls=$
the rational numbers, which exists since $P$ is $\sigma$-complete. A similar remark applies to other analogous
infima that appear below.

\begin{lemma}\label{le:supinf} Let $(p_{\lambda})_{\lambda\in\reals}$ and $(q_{\lambda})_{\lambda\in
\reals}$ be bounded resolutions of identity in the Banach SA $A$.
Then,
\begin{enumerate}
\item $u_{\lambda}:=p_{\lambda}\wedge q_{\lambda}$, $\lambda\in\reals$, and
\item $v_{\lambda}:=\bigwedge_{\lambda<\mu\in\reals}(p_{\mu}\vee q_{\mu})$, $\lambda\in\reals$.
\end{enumerate}
\noindent define bounded resolutions of identity $(u_{\lambda})_{\lambda \in\reals}$ and
$(v_{\lambda})_{\lambda\in\reals}$.
\end{lemma}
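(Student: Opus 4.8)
The plan is to verify the three defining conditions of a bounded resolution of identity (Definition \ref{de:resid}) for each of the families $(u_{\lambda})_{\lambda\in\reals}$ and $(v_{\lambda})_{\lambda\in\reals}$. Since condition (2) in that definition is redundant once (3) holds, the real work is to check (1) and (3) in each case; I would nevertheless keep (2) in mind as a sanity check and because the monotonicity of the original families $(p_\lambda)$ and $(q_\lambda)$ is used repeatedly.

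For $(u_{\lambda})$ the argument is routine. Condition (1): pick $K_p, K_q$ witnessing (1) for $(p_\lambda)$, $(q_\lambda)$ and let $K:=\max(K_p,K_q)$; then for $\lambda<-K$ both $p_\lambda=q_\lambda=0$, so $u_\lambda=0$, and for $K\leq\lambda$ both equal $1$, so $u_\lambda=p_\lambda\wedge q_\lambda=1$. Condition (3): since $\mu\mapsto p_\mu$ and $\mu\mapsto q_\mu$ are increasing, for fixed $\lambda$ the net $(p_\mu\wedge q_\mu)_{\mu>\lambda}$ is decreasing and bounded below by $u_\lambda$, so I must show $\bigwedge_{\mu>\lambda}(p_\mu\wedge q_\mu)=p_\lambda\wedge q_\lambda$. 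One inclusion is clear since $u_\lambda\leq p_\mu\wedge q_\mu$ for every $\mu>\lambda$. For the other, note $\bigwedge_{\mu>\lambda}(p_\mu\wedge q_\mu)=\big(\bigwedge_{\mu>\lambda}p_\mu\big)\wedge\big(\bigwedge_{\mu>\lambda}q_\mu\big)$ — this distributivity of a meet of a product over a downward-directed family holds in any lattice (each side is $\leq$ the other by the directedness and the fact that it suffices to test against the cofinal rational $\mu$'s, so the infima are countable and exist in the $\sigma$-complete OML $P$) — and then apply condition (3) for the two original resolutions to get $p_\lambda\wedge q_\lambda=u_\lambda$.

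For $(v_{\lambda})$ condition (1) is again immediate with the same $K$: for $\lambda\geq K$ every $p_\mu\vee q_\mu$ with $\mu>\lambda$ equals $1$, so $v_\lambda=1$; for $\lambda<-K$, choosing any $\mu$ with $\lambda<\mu<-K$ gives $p_\mu\vee q_\mu=0$, hence $v_\lambda=0$. The substantive point is condition (3), namely $v_\lambda=\bigwedge_{\mu>\lambda}v_\mu$. Here $v_\mu=\bigwedge_{\nu>\mu}(p_\nu\vee q_\nu)$, so $\bigwedge_{\mu>\lambda}v_\mu=\bigwedge_{\mu>\lambda}\bigwedge_{\nu>\mu}(p_\nu\vee q_\nu)$. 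I would argue this double infimum collapses to $\bigwedge_{\nu>\lambda}(p_\nu\vee q_\nu)=v_\lambda$: it is $\leq v_\lambda$ is false in the wrong direction, so more carefully — each $v_\mu\geq v_\lambda$ (monotonicity of $v$), giving $\bigwedge_{\mu>\lambda}v_\mu\geq v_\lambda$; conversely, for any $\nu>\lambda$ pick $\mu$ with $\lambda<\mu<\nu$, so $p_\nu\vee q_\nu\geq v_\mu\geq\bigwedge_{\mu'>\lambda}v_{\mu'}$, and taking the infimum over such $\nu$ yields $v_\lambda\geq\bigwedge_{\mu'>\lambda}v_{\mu'}$. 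Thus equality holds. Throughout I use that all these infima can be computed over rationals, so they are countable and exist in $P$ by $\sigma$-completeness; I would state this once, referencing the remark preceding the lemma, rather than repeat it.

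The main obstacle I anticipate is purely bookkeeping: making sure that every infimum written down actually exists in $P$ before manipulating it, and being careful about the direction of the inequalities when interchanging the order of the two nested infima in the $(v_\lambda)$ case (it is tempting but wrong to treat $\bigwedge_\mu\bigwedge_\nu$ as symmetric without using the monotonicity of $\mu\mapsto v_\mu$). There is no deep lattice-theoretic difficulty — distributivity of the relevant meets over downward-directed families is elementary — but the proof should be organized so these elementary facts are invoked cleanly rather than reproved inline.
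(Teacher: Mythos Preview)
Your proposal is correct and follows essentially the same approach as the paper. The paper's proof is much terser---it declares conditions (1), (2), and condition (3) for $(u_\lambda)$ to be ``clear'' and verifies only condition (3) for $(v_\lambda)$ via the single displayed identity $\bigwedge_{\lambda'>\lambda}\bigwedge_{\mu>\lambda'}(p_\mu\vee q_\mu)=\bigwedge_{\mu>\lambda}(p_\mu\vee q_\mu)$---whereas you spell out the details (and your two-inequality argument for that collapse is exactly what underlies the paper's one-liner). One small remark: the identity $\bigwedge_\mu(p_\mu\wedge q_\mu)=(\bigwedge_\mu p_\mu)\wedge(\bigwedge_\mu q_\mu)$ that you invoke for $(u_\lambda)$ is just associativity of infima and holds in any lattice whenever the infima exist, so the appeal to directedness there is unnecessary.
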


\begin{proof} Parts (1) and (2) of Definition \ref{de:resid} are clear as is part (3) for (i). We only have to
check part (3) for (ii). We have
\[
\bigwedge\sb{\lambda\sp{\prime}>\lambda}v\sb{\lambda\sp{\prime}}=\bigwedge_{\lambda'>\lambda}
\left(\bigwedge_{\mu >\lambda'}(p_{\mu}\vee q_{\mu})\right)=\bigwedge_{\mu > \lambda}(p_{\mu}\vee q_{\mu})
=v_{\lambda}. \qedhere
\]
\end{proof}

\begin{definition}\label{de:supinf} Let $a,b$ be elements of the Banach SA $A$ with corresponding spectral
resolutions $(p_{a,\lambda})_{\lambda\in\reals}$, $(p_{b,\lambda})_{\lambda\in\reals}$. Then, as per Lemma
\ref{le:supinf}, we define $a\vee_s b$ and $a\wedge_s b$ to be the elements of $A$ whose spectral resolutions
are given by $p\sb{a\vee_sb,\lambda}:=p_{\lambda}\wedge q_{\lambda}$, $\lambda\in\reals$ and $p\sb{a\wedge_s b,
\lambda}:=\bigwedge_{\lambda<\mu\in\reals}(p_{\mu}\vee q_{\mu})$, $\lambda\in\reals$.
\end{definition}

\begin{theorem} \label{th:minmax} For $a,b$ in the Banach SA $A$, the element $a\wedge_sb$ is the infimum
and $a\vee_s b$ is the supremum of $a$ and $b$ in the spectral order $\leq_s$, whence $(A;\leq\sb{s})$ is
a lattice. Moreover, under the spectral order, $E$ is a sublattice of $A$.
\end{theorem}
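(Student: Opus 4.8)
The plan is to verify the four claims in the theorem---that $a\wedge_s b$ is a lower bound, that it is the greatest lower bound, the dual statements for $a\vee_s b$, and finally that $E$ is closed under $\wedge_s$ and $\vee_s$---using only the translation of the order into spectral resolutions via Definition \ref{de:specord} and the lattice structure of the $\sigma$-complete OML $P$. First I would fix notation, writing $p_\lambda:=p_{a,\lambda}$ and $q_\lambda:=p_{b,\lambda}$, and recall from Lemma \ref{le:supinf} and Definition \ref{de:supinf} that $a\vee_s b$ has spectral resolution $\lambda\mapsto p_\lambda\wedge q_\lambda$ while $a\wedge_s b$ has spectral resolution $\lambda\mapsto v_\lambda:=\bigwedge_{\lambda<\mu}(p_\mu\vee q_\mu)$; both of these are genuine bounded resolutions of identity by Lemma \ref{le:supinf}, hence by Theorem \ref{th:resid} they really are the spectral resolutions of well-defined elements of $A$.

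Next, the lower-bound claim for $a\wedge_s b$: I must show $v_\lambda\le p_\lambda$ and $v_\lambda\le q_\lambda$ for every $\lambda$, which by Definition \ref{de:specord} says exactly $a\wedge_s b\le_s a$ and $a\wedge_s b\le_s b$. For each $\mu>\lambda$ we have $p_\mu\vee q_\mu\ge p_\mu\ge p_\lambda$ is false in the wrong direction---so instead I use $v_\lambda\le p_\mu\vee q_\mu$ is not what I want either; the correct move is: $v_\lambda=\bigwedge_{\mu>\lambda}(p_\mu\vee q_\mu)$, and by Theorem \ref{th:specresprops}(v) applied to $a$, $p_\lambda=\bigwedge_{\mu>\lambda}p_\mu$, so since $p_\mu\le p_\mu\vee q_\mu$ termwise, taking infima over $\mu>\lambda$ gives $p_\lambda\le v_\lambda$. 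Wait---that shows $p_\lambda\le v_\lambda$, i.e. $a\le_s a\wedge_s b$, the wrong direction; so I need the reverse inequality between the spectral resolution of the infimum and those of $a,b$. Reconsidering: by Definition \ref{de:specord}, $x\le_s y$ means $p_{y,\lambda}\le p_{x,\lambda}$, so a lower bound has the \emph{larger} spectral projections. Thus to show $a\wedge_s b$ is a lower bound I need $p_\lambda\le v_\lambda$ and $q_\lambda\le v_\lambda$, which is precisely what the computation above gives (and symmetrically for $q$); good. For the greatest-lower-bound property, suppose $c\le_s a$ and $c\le_s b$, so $p_\lambda\le p_{c,\lambda}$ and $q_\lambda\le p_{c,\lambda}$ for all $\lambda$; then $p_\lambda\vee q_\lambda\le p_{c,\lambda}$ in $P$, and using Theorem \ref{th:specresprops}(v) for $c$ together with monotonicity, $v_\lambda=\bigwedge_{\mu>\lambda}(p_\mu\vee q_\mu)\le\bigwedge_{\mu>\lambda}p_{c,\mu}=p_{c,\lambda}$, i.e. $c\le_s a\wedge_s b$.

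The supremum claims are the mirror image and are easier, since no inf-closure step is needed: $a\vee_s b$ has spectral resolution $p_\lambda\wedge q_\lambda\le p_\lambda$ and $\le q_\lambda$, which by Definition \ref{de:specord} says $a\le_s a\vee_s b$ and $b\le_s a\vee_s b$; and if $a\le_s c$, $b\le_s c$ then $p_{c,\lambda}\le p_\lambda$ and $p_{c,\lambda}\le q_\lambda$, hence $p_{c,\lambda}\le p_\lambda\wedge q_\lambda$, i.e. $a\vee_s b\le_s c$. This establishes that $(A;\le_s)$ is a lattice. Finally, for the statement that $E$ is a sublattice of $A$: by Corollary \ref{le:Especorder}, $E=\{e\in A:0\le_s e\le_s 1\}$, so $E$ is an order-interval in the poset $(A;\le_s)$; and an order-interval in any lattice is closed under the ambient meet and join (if $0\le_s e,f\le_s 1$ then $0\le_s e\wedge_s f\le_s e\le_s 1$ and $0\le_s e\le_s e\vee_s f\le_s 1$), so $e\wedge_s f$ and $e\vee_s f$ computed in $A$ already lie in $E$ and are the meet and join in $E$. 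I expect the only real subtlety to be keeping the direction of the order straight---remembering throughout that larger spectral projections mean smaller elements---and correctly invoking Theorem \ref{th:specresprops}(v) to push infima over $\mu>\lambda$ through the join and meet in the $\sigma$-complete OML $P$; everything else is routine.
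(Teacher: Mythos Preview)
Your proposal is correct and follows essentially the same route as the paper's proof: translate $\leq_s$ into inequalities between spectral projections, use the termwise inequalities $p_\mu\le p_\mu\vee q_\mu$ and $p_\lambda\wedge q_\lambda\le p_\lambda$ in $P$ together with the right-continuity property $p_{\,\cdot\,,\lambda}=\bigwedge_{\mu>\lambda}p_{\,\cdot\,,\mu}$, and then invoke Corollary~\ref{le:Especorder} for the sublattice claim. The mid-argument self-corrections about the direction of the inequality are distracting but harmless---once you settle on ``larger spectral projections $\Leftrightarrow$ smaller in $\leq_s$'' the verification is exactly the paper's.
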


\begin{proof} For all $\lambda \in {\mathbb R}$,
\[
p_{a\wedge_s b,\lambda}=\bigwedge_{\mu > \lambda}(p_{a,\mu}\vee p_{b,\mu})\geq \bigwedge_{\mu>\lambda}p_{a,\mu},
\]
which shows that $a\wedge_sb\leq_sa$, and similarly $a\wedge_s b\leq_s b$. If $c\in A$ is such that $c\leq_s a,b$,
then $p_{a,\mu}, p_{b,\mu}\leq p_{c,\mu}$ for all $\mu$, hence
\[
\bigwedge_{\mu>\lambda}(p_{a,\mu}\vee p_{b,\mu})\leq \bigwedge_{\mu>\lambda}p_{c,\mu}=p_{c,\lambda}.
\]
This shows that $c\leq_s a\wedge_sb$. A similar argument proves that $a,b\leq_s a\vee_s b$ and that $a,b
\leq_s c$ implies $a\vee_s b\leq c$.

By Corollary \ref{le:Especorder}, under the spectral order, $E$ is a sublattice of $A$.
\end{proof}

\begin{theorem}\label{th:Sigmalat}
For the Banach SA $A$, $(E;\leq_s)$ is a  $\sigma$-complete lattice.
\end{theorem}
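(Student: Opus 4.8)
The plan is to upgrade Theorem~\ref{th:minmax}, which already gives binary spectral suprema and infima in $A$ (and in $E$ as a sublattice), to countable completeness for $(E;\leq_s)$. The strategy is to take an arbitrary sequence $(e_n)_{n\in\Nat}$ in $E$ and construct, out of their spectral resolutions, a bounded resolution of identity whose associated element (via Theorem~\ref{th:resid}) is the spectral infimum, and dually for the supremum. The construction mimics Lemma~\ref{le:supinf} but with countably many projections: for the infimum of the $e_n$ I would set $v_\lambda:=\bigwedge_{\lambda<\mu\in\reals}\bigl(\bigvee_{n\in\Nat}p_{e_n,\mu}\bigr)$, and for the supremum $u_\lambda:=\bigwedge_{n\in\Nat}p_{e_n,\lambda}$. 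Here is where $\sigma$-completeness of $P$ (from \cite[Theorem 5.4]{FPba}, since $A$ is Banach) is essential: the countable joins $\bigvee_{n}p_{e_n,\mu}$ and countable meets $\bigwedge_n p_{e_n,\lambda}$ exist in $P$; moreover, as in the remark preceding Lemma~\ref{le:supinf}, the outer infimum over real $\mu>\lambda$ can be replaced by the infimum over rational $\xi>\lambda$, so everything in sight is a countable lattice operation in $P$.

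Next I would verify that $(u_\lambda)_{\lambda\in\reals}$ and $(v_\lambda)_{\lambda\in\reals}$ satisfy conditions (1)--(3) of Definition~\ref{de:resid}. Boundedness (condition (1)) is immediate because all $e_n\in E$, so each $p_{e_n,\lambda}=0$ for $\lambda<0$ and $=1$ for $\lambda\geq 1$ (using Theorem~\ref{th:specresprops}(iii),(iv) with $0\leq L_{e_n}\leq U_{e_n}\leq 1$), and these vanishing/saturation properties survive countable meets and joins; so one may take $K=1$. Condition (3) for $(v_\lambda)$ is the same telescoping computation as in the proof of Lemma~\ref{le:supinf}: $\bigwedge_{\lambda'>\lambda}v_{\lambda'}=\bigwedge_{\lambda'>\lambda}\bigwedge_{\mu>\lambda'}(\cdots)=\bigwedge_{\mu>\lambda}(\cdots)=v_\lambda$. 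Condition (3) for $(u_\lambda)$ follows by interchanging the order of the two countable meets: $\bigwedge_{\lambda'>\lambda}u_{\lambda'}=\bigwedge_{\lambda'>\lambda}\bigwedge_n p_{e_n,\lambda'}=\bigwedge_n\bigwedge_{\lambda'>\lambda}p_{e_n,\lambda'}=\bigwedge_n p_{e_n,\lambda}=u_\lambda$, where the inner identity is Theorem~\ref{th:specresprops}(v). By Theorem~\ref{th:resid} these resolutions are the spectral resolutions of elements of $A$, which I will call $\bigwedge_s^{(n)}e_n$ and $\bigvee_s^{(n)}e_n$; and since $K=1$ with all resolutions trivial outside $[0,1]$, Theorem~\ref{th:resid} gives norm $\leq 1$ and in fact forces these elements into $E$ (or one can cite Corollary~\ref{le:Especorder} after checking $0\leq_s\cdot\leq_s 1$).

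The remaining step is to check that these elements really are the infimum and supremum in $(E;\leq_s)$, i.e.\ the universal property, and this is exactly the argument in the proof of Theorem~\ref{th:minmax} with finite joins/meets replaced by countable ones. For the infimum: $p_{\bigwedge_s e_n,\lambda}=\bigwedge_{\mu>\lambda}\bigvee_n p_{e_n,\mu}\geq\bigwedge_{\mu>\lambda}p_{e_m,\mu}=p_{e_m,\lambda}$ for each $m$, so $\bigwedge_s e_n\leq_s e_m$ for all $m$; and if $c\in E$ with $c\leq_s e_n$ for all $n$, then $p_{e_n,\mu}\leq p_{c,\mu}$ for all $n,\mu$, hence $\bigvee_n p_{e_n,\mu}\leq p_{c,\mu}$, hence $\bigwedge_{\mu>\lambda}\bigvee_n p_{e_n,\mu}\leq\bigwedge_{\mu>\lambda}p_{c,\mu}=p_{c,\lambda}$, i.e.\ $c\leq_s\bigwedge_s e_n$. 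The supremum is dual, using $u_\lambda=\bigwedge_n p_{e_n,\lambda}$ directly. Since arbitrary countable infima and suprema exist, $(E;\leq_s)$ is a $\sigma$-complete lattice.

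I expect the only genuine subtlety to be bookkeeping around the interchange of countable lattice operations in the OML $P$ and the reduction of real-indexed infima to rational-indexed ones — in a general OML one cannot freely interchange arbitrary meets and joins, but here all the manipulations needed are either (a) interchanging two countable meets (always valid, as both equal the meet over the product index set) or (b) the specific identity of Theorem~\ref{th:specresprops}(v), so no infinite distributivity is required. A secondary point worth a sentence is confirming the ``$K=1$'' boundedness so that the constructed elements land in $E$ rather than merely in $A$; this is where it matters that we work inside the effect algebra and invoke Corollary~\ref{le:Especorder} or Corollary~\ref{co:oneproj}.
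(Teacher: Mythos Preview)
Your proposal is correct and follows essentially the same route as the paper: define $u_\lambda=\bigwedge_n p_{e_n,\lambda}$ for the supremum and $v_\lambda=\bigwedge_{\mu>\lambda}\bigvee_n p_{e_n,\mu}$ for the infimum, verify these are bounded resolutions of identity (using $\sigma$-completeness of $P$), invoke Theorem~\ref{th:resid}, and then check the universal properties exactly as you describe. Your discussion of the interchange of countable meets and the reduction of real-indexed infima to rational-indexed ones is more explicit than the paper's, which simply asserts that $(u_\lambda)$ is a spectral resolution without spelling out the verification of right-continuity.
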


\begin{proof} Let $(e_n)_{n\in\Nat}$ be a countable family in $E$ and let $(p_{e_n,\lambda})_{\lambda
\in\reals}$ be the corresponding spectral resolutions of $e_n, n\in\Nat$.

Let
\[
u_{\lambda}:=\bigwedge_{n\in\Nat}p_{e_n,\lambda}.
\]
It is easy to check that $(u_{\lambda})_{\lambda \in\reals}$ is a spectral resolution of an element
$e^{\vee}$ belonging to $E$. From the definition of $u_{\lambda}$ we have  $p_{e_n,\lambda}\geq u_{\lambda},\,
\forall\lambda \, \Rightarrow \, e_n\leq_s e^{\vee}$ for all $n\in\Nat$. Let $f\in E$ with the spectral
resolution $(p_{f,\lambda})_{\lambda \in\reals}$ be such that $e_n\leq_s f,\, \forall n\in\Nat$, i.e.,
\[
\forall n\in\Nat, \forall \lambda\in\reals: p_{f,\lambda}\leq p_{e_n,\lambda}.
\]
Then
\[
\forall\lambda\in\reals, \  p_{f,\lambda}\leq \bigwedge_{n\in\Nat} p_{e_n,\lambda}=u_{\lambda},
 \]
i.e., $e^{\vee}\leq_s f$. Therefore $\overset{s}{\bigvee}_{n\in\Nat}e_n:=e^{\vee}$ is the supremum of
$(e_n)_{n\in\Nat}$ in $(E;\leq_s)$.

To show that $(e_n)_{n\in\Nat}$ has an infimum in $E$, we set
 \[
 v_{\lambda}:=\bigwedge_{\mu>\lambda}\bigvee_{n\in\Nat} p_{e_n,\mu}.
 \]

We shall show that $(v_{\lambda})_{\lambda\in\reals}$ is a spectral resolution. The properties $v_{\lambda}=0$
for $\lambda < 0$ and $v_{\lambda} =1$ for $\lambda > 1$ are clear. Let $\mu, \nu, \lambda_1,\lambda_2$ be such
that $\lambda_1< \mu< \lambda_2<\nu$. Then
 \[
 \bigvee_{n\in\Nat} p_{e_n,\mu}\leq \bigvee_{n\in\Nat}p_{e_n,\nu}, \text{\ whence\ }\bigvee_{n\in\Nat}p_{e_n,\mu}\leq \bigwedge_
 {\nu>\lambda_2}\bigvee_{n\in\Nat}p_{e_n,\nu}.
 \]
This implies
 \[
\bigwedge_{\mu>\lambda_1}\bigvee_{n\in\Nat}p_{e_n,\mu}\leq\bigwedge_{\nu>\lambda_2}\bigvee_{n\in\Nat}
p_{e_n,\nu}, \text{\ i.e.\ }v_{\lambda_1}\leq v_{\lambda_2}.
 \]

We also have
\[
 \bigwedge_{\mu>\lambda} v_{\mu}=\bigwedge_{\mu>\lambda}\bigwedge_{\nu>\mu}\bigvee_{n\in\Nat}p_{e_n,\nu}=
\bigwedge_{\nu>\lambda}\bigvee_{n\in\Nat}p_{e_n,\nu}=v_{\lambda}.
 \]
Hence $(v_{\lambda})_{\lambda\in\reals}$ is the spectral resolution of an element $e^{\wedge}\in E$.
It remains to prove that $e^{\wedge}$ is the infimum of $(e_n)_{n\in\Nat}$. Let $f\in E$ with the spectral
resolution $(q_{\lambda})_{\lambda\in\reals}$ satisfy $f\leq_s e_n,\, \forall n\in\Nat$. Then $p_{e_n,\mu}
\leq q_{\mu},\, \forall \mu$, hence
\[
 p_{e^{\wedge},\lambda}=v_{\lambda}=\bigwedge_{\mu>\lambda}\bigvee_n p_{e_n,\mu} \leq \bigwedge_{\mu>\lambda}
q_{\mu}=q_{\lambda},\, \forall  \lambda,
\]
hence $e^{\wedge}\geq_s f$. This proves that $e^{\wedge}$ is the infimum of $(e_n)_{n\in\Nat}$.
\end{proof}

In view of Remark \ref{rm:E}, Theorem \ref{th:Sigmalat} has the following corollary.

\begin{corollary} \label{co:Sigmalat}
For the Banach SA $A$, $(A;\leq_s)$ is a  Dedekind $\sigma$-complete lattice.
\end{corollary}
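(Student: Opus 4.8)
The plan is to deduce Corollary~\ref{co:Sigmalat} from Theorem~\ref{th:Sigmalat} by means of the rescaling device recorded in Remark~\ref{rm:E}. First I would observe that since $A$ is Banach, Theorem~\ref{th:minmax} already gives that $(A;\leq\sb{s})$ is a lattice, so the only thing left is Dedekind $\sigma$-completeness, i.e.\ that every countable subset $(a\sb{n})\sb{n\in\Nat}$ of $A$ that is bounded above in $\leq\sb{s}$ has a supremum (and dually, every countable subset bounded below has an infimum; by Theorem~\ref{th:le-les} boundedness in $\leq\sb{s}$ implies boundedness in the norm via the order-unit norm, so no separate norm hypothesis is needed).

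Next I would fix such a family $(a\sb{n})\sb{n\in\Nat}$ together with a spectral upper bound $b$, i.e.\ $a\sb{n}\leq\sb{s}b$ for all $n$. By Theorem~\ref{th:le-les} we then have $a\sb{n}\leq b$ for all $n$ and also each $a\sb{n}$ is norm-bounded; choosing $N\in\Nat$ with $\|b\|\leq N$ and $\|a\sb{n}\|\leq N$ for all $n$ (one can in fact take $N$ to bound $\|b\|$ alone, since $-N\leq -\|b\|\leq a\sb n\leq\|b\|\leq N$), set $e\sb{n}:=\frac1{2N}a\sb{n}+\frac12\in E$ and, if one wishes, $g:=\frac1{2N}b+\frac12\in E$. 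By Lemma~\ref{le:E}(ii), for $c,d\in A$ we have $c\leq\sb{s}d$ iff $\frac1{2N}c+\frac12\leq\sb{s}\frac1{2N}d+\frac12$; hence $e\sb{n}\leq\sb{s}g$ for all $n$, and more to the point the map $c\mapsto\frac1{2N}c+\frac12$ is an order isomorphism of $(A;\leq\sb{s})$ onto a sub-poset of itself carrying $A$ bijectively, with inverse $e\mapsto 2N(e-\frac12)=2Ne-N$, which by the same lemma is also a $\leq\sb{s}$-order isomorphism.

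By Theorem~\ref{th:Sigmalat}, $(E;\leq\sb{s})$ is a $\sigma$-complete lattice, so $e:=\overset{s}{\bigvee}\sb{n\in\Nat}e\sb{n}$ exists in $E$. I would then claim that $a:=2Ne-N\in A$ is the supremum of $(a\sb{n})\sb{n\in\Nat}$ in $(A;\leq\sb{s})$. Indeed, applying the $\leq\sb{s}$-order isomorphism $c\mapsto\frac1{2N}c+\frac12$ and its inverse: from $e\sb{n}\leq\sb{s}e$ we get $a\sb{n}=2Ne\sb{n}-N\leq\sb{s}2Ne-N=a$, so $a$ is an upper bound; and if $d\in A$ satisfies $a\sb{n}\leq\sb{s}d$ for all $n$, then $d$ is norm-bounded (again by Theorem~\ref{th:le-les}, $d\geq a\sb1$ so $d$ is bounded below, and one needs $d$ bounded above too — this requires a small argument, see below) so after possibly enlarging $N$ we have $\frac1{2N}d+\frac12\in E$ with $e\sb{n}\leq\sb{s}\frac1{2N}d+\frac12$ for all $n$, whence $e\leq\sb{s}\frac1{2N}d+\frac12$ and therefore $a\leq\sb{s}d$. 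The infimum statement is entirely dual, using that $(E;\leq\sb{s})$ has countable infima. Finally I would note that for a lattice, having all countable suprema of bounded-above sets and all countable infima of bounded-below sets is exactly Dedekind $\sigma$-completeness, concluding the proof.

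The one delicate point — the step I expect to be the main (minor) obstacle — is the bookkeeping around the scaling constant $N$: in verifying the least-upper-bound property one is handed a competing upper bound $d$, and to push it into $E$ one must know $\|d\|$ is controlled, yet a priori $d$ need not lie below the original $b$ in norm. The clean fix is not to rescale $d$ separately but to work inside a fixed $E$-interval: choose $N$ once so that $\|b\|\leq N$, replace the hypothesis ``$d$ is an upper bound'' by ``$a\wedge\sb{s}d$-type truncation,'' or more simply observe that since $a\sb{1}\leq\sb{s}d$ and $d\leq\sb{s}$ (the supremum, once we know it exists) is not yet available, instead argue: the supremum $a$ we constructed satisfies $a\leq\sb{s}b$ (because $e\leq\sb{s}g$), and for any upper bound $d$ we may first replace $d$ by $d\wedge\sb{s}b$ using Theorem~\ref{th:minmax} — this is still an upper bound of all $a\sb{n}$ since $a\sb{n}\leq\sb{s}b$, and now $d\wedge\sb{s}b\leq\sb{s}b$ forces, via Theorem~\ref{th:le-les} and the order-unit norm, $\|d\wedge\sb{s}b\|\leq\max(\|b\|,\|a\sb1\|)\leq N$ after the original choice of $N$ (bounding $\|a\sb1\|$ as well). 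Then $\frac1{2N}(d\wedge\sb{s}b)+\frac12\in E$ dominates every $e\sb n$, hence dominates $e$, hence $a\leq\sb{s}d\wedge\sb{s}b\leq\sb{s}d$. Everything else is the routine transport of the $\sigma$-completeness of $E$ across an affine order isomorphism, which Remark~\ref{rm:E} was set up precisely to license.
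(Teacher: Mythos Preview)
Your approach is exactly the one the paper intends: the paper's entire proof is the single sentence ``In view of Remark~\ref{rm:E}, Theorem~\ref{th:Sigmalat} has the following corollary,'' and you are supplying the details of that transfer.

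There is one genuine slip. You assert that one can take $N$ with $\|a\sb{n}\|\leq N$ for all $n$, and justify it by ``$-N\leq -\|b\|\leq a\sb{n}$.'' That inequality is unfounded: from $a\sb{n}\leq\sb{s}b$ you get (via Theorem~\ref{th:le-les}) only $a\sb{n}\leq b$, not any lower bound on $a\sb{n}$. A sequence like $a\sb{n}=-n$ with $b=0$ shows no single $N$ need work. The fix is the mirror image of the trick you already used for the competing upper bound $d$: since $(A;\leq\sb{s})$ is a lattice (Theorem~\ref{th:minmax}), replace $(a\sb{n})$ by $(a\sb{1}\vee\sb{s}a\sb{n})$, which has the same set of $\leq\sb{s}$-upper bounds and is now bounded below by $a\sb{1}$ and above by $b$; then choose $N\geq\max(\|a\sb{1}\|,\|b\|)$ and proceed exactly as you wrote. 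With that adjustment your $d\mapsto d\wedge\sb{s}b$ argument for the least-upper-bound property goes through cleanly, and the infimum case is dual.
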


\begin{remarks}
Arguments similar to the proofs of Theorem \ref{th:Sigmalat} and Corollary \ref{co:Sigmalat} show that, if
$A$ is isomorphic to the self-adjoint part of an AW$\sp{\ast}$-algebra (i.e., $A$ is Banach and $P$
is a complete OML), then under the spectral order, the algebra $E$ of effects forms a complete lattice and $A$
is a Dedekind complete lattice (Cf. \cite[Theorem 3.1]{dG}).
\end{remarks}

\section{Kleene and Brouwer-Zadeh posets} \label{sc:K&BZ}

\begin{definition} \label{df:involution}
If $({\mathcal P};\leq)$ is a poset, then an \emph{involution} on ${\mathcal P}$ is a mapping
$\sp{\perp}\colon{\mathcal P}\to{\mathcal P}$ such that, for all $a,b\in{\mathcal P}$:
\begin{enumerate}
\item[(I1)] $(a\sp{\perp})\sp{\perp}=a$ (i.e., $\sp{\perp}$ is of \emph{period two}), and
\item[(I2)] $a\leq b\Rightarrow b\sp{\perp}\leq a\sp{\perp}$ (i.e., $\sp{\perp}$ is \emph{order-reversing}).
\end{enumerate}
An involution $\sp{\perp}\colon{\mathcal P}\to{\mathcal P}$ is \emph{regular} iff, for all $a,b\in
{\mathcal P}$,
\begin{enumerate}
\item[(R)] If $a\leq a\sp{\perp}$ and $b\leq b\sp{\perp}$, then $a\leq b\sp{\perp}$.
\end{enumerate}
An \emph{involution poset} is a structure $({\mathcal P};\leq, \sp{\perp})$ where $\sp{\perp}$
is an involution on the poset $({\mathcal P};\leq)$. If $\sp{\perp}$ is regular, then $({\mathcal P};
\leq,\sp{\perp})$ is a \emph{regular involution poset}. An \emph{involution lattice} is an involution poset
$({\mathcal P};\leq,\sp{\perp})$ such that $({\mathcal P};\leq)$ is a lattice. A \emph{bounded involution
poset} is a structure $({\mathcal P};\leq, ^{\perp},0,1)$, where $(L,\leq, 0,1)$ is a bounded poset (i.e.,
$0\leq a\leq 1,\,\forall a\in{\mathcal P}$) and $({\mathcal P};\leq, ^{\perp})$ is an involution poset.
A bounded involution poset $({\mathcal P};\leq, ^{\perp},0,1)$ such that the involution $^{\perp}$ is
regular is called a \emph{Kleene poset}. A Kleene poset $({\mathcal P};\leq,^{\perp},0,1)$ is a \emph
{Kleene lattice} iff $({\mathcal P};\leq)$ is a lattice.
\end{definition}

If $({\mathcal P};\leq, \sp{\perp})$ is an involution poset, then there is a \emph{De Morgan duality}
on ${\mathcal P}$ whereby for each existing infimum $\bigwedge\sb{\gamma\in\Gamma}a\sb{\gamma}$ in
${\mathcal P}$, the supremum $\bigvee\sb{\gamma\in\Gamma}a\sb{\gamma}\sp{\perp}$ exists in
${\mathcal P}$ and $\left(\bigwedge\sb{\gamma\in\Gamma}a\sb{\gamma}\right)\sp{\perp}=\bigvee\sb{\gamma
\in\Gamma}a\sb{\gamma}\sp{\perp}$; likewise, for each existing supremum $\bigvee\sb{\gamma\in\Gamma}a
\sb{\gamma}$ in ${\mathcal P}$, the infimum $\bigwedge\sb{\gamma\in\Gamma}a\sb{\gamma}\sp{\perp}$ exists
in ${\mathcal P}$ and $\left(\bigvee\sb{\gamma\in\Gamma}a\sb{\gamma}\right)\sp{\perp}=\bigwedge
\sb{\gamma\in\Gamma}a\sb{\gamma}\sp{\perp}$. In particular, an involution poset $({\mathcal P};
\leq,\sp{\perp})$ satisfies the \emph{De\,Morgan laws}: For all $a,b\in{\mathcal P}$, if $a\wedge b$
exists in ${\mathcal P}$, then $a\sp{\perp}\vee b\sp{\perp}$ exists in ${\mathcal P}$ and $(a\wedge b)
\sp{\perp}=a\sp{\perp}\vee b\sp{\perp}$; likewise, if $a\vee b$ exists in ${\mathcal P}$, then
$a\sp{\perp}\wedge b\sp{\perp}$ exists in ${\mathcal P}$ and $(a\vee b)\sp{\perp}=a\sp{\perp}\wedge b
\sp{\perp}$.

The assumption that $A$ is a synaptic algebra, $E$ is the algebra of effects in $A$, and $P$ is the OML
of projections in $A$ remains in force. We shall need the next lemma for the proof of part (ii) of Theorem
\ref{th:EKleene} below. In \cite[Definition 8.2 (iii)]{Fsa}, for $a\in A$ and $\lambda\in\reals$, $d
\sb{a,\lambda} :=1-(a-\lambda)\dg=((a-\lambda)\dg)\sp{\perp}$, which plays a role in the statement of the
lemma and its proof, is called the \emph{$\lambda$-eigenprojection} of $a$, and $\lambda$ is an \emph
{eigenvalue} of $a$ iff $d\sb{a,\lambda}\not=0$.

\begin{lemma} \label{le:SRes-a}
Let $a\in A$ and $\lambda\in\reals$. Then{\rm: (i)} $p\sb{-a,\lambda}=1-p\sb{a,-\lambda}+
d\sb{a,-\lambda}$. {\rm(ii)} $p\sb{-a,\lambda}=1-\bigvee\sb{\mu\in\reals,\, \mu<-\lambda}p\sb{a,\mu}$.
{\rm(iii)} $p\sb{1-a,\lambda}=1-\bigvee\sb{\mu\in\reals,\,\mu<1-\lambda}p\sb{a,\mu}$.
\end{lemma}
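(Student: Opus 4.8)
\textbf{Proof proposal for Lemma \ref{le:SRes-a}.}

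The plan is to reduce everything to part (i), which is a direct computation with the definition $p_{a,\lambda}=(((a-\lambda)^{+})^{\dg})^{\perp}$, the identity $b^{+}-b^{-}=b$, the fact that $(-b)^{+}=b^{-}$, and the relation $b^{\dg}+(-b)^{\dg}-d_{b,0}$-type bookkeeping among carriers and eigenprojections. Concretely, first I would fix $a$ and $\lambda$ and set $b:=a+\lambda$, so that $-a-\lambda=-b$ and $(-a-\lambda)^{+}=(-b)^{+}=b^{-}$. Then $p_{-a,\lambda}=(((-a-\lambda)^{+})^{\dg})^{\perp}=1-(b^{-})^{\dg}$. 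On the other hand $(a-(-\lambda))^{+}=(a+\lambda)^{+}=b^{+}$, so $p_{a,-\lambda}=1-(b^{+})^{\dg}$ and the $(-\lambda)$-eigenprojection is $d_{a,-\lambda}=((a+\lambda)^{\dg})^{\perp}=1-b^{\dg}$. Thus the claimed identity (i) becomes, after substituting,
\[
1-(b^{-})^{\dg}=1-(1-(b^{+})^{\dg})+(1-b^{\dg})=(b^{+})^{\dg}+1-b^{\dg},
\]
i.e.\ $(b^{+})^{\dg}+(b^{-})^{\dg}=b^{\dg}$. This last equation is exactly the orthogonal decomposition of the carrier of $b$ into the carriers of its positive and negative parts: since $b^{+}b^{-}=b^{-}b^{+}=0$ and $|b|=b^{+}+b^{-}$, one has $b^{\dg}=|b|^{\dg}=(b^{+})^{\dg}\vee(b^{-})^{\dg}=(b^{+})^{\dg}+(b^{-})^{\dg}$, using the properties of carriers recalled in Section \ref{sc:SAs} (in particular $b^{\dg}=(b^{2})^{\dg}=|b|^{\dg}$ and that orthogonal projections have carrier-sum equal to their join). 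I would cite the relevant items from \cite{Fsa} (the carrier properties in \S 3 and Theorem 2.7 / 2.10) rather than reprove them.

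For part (ii), I would rewrite the right-hand side of (i) purely in terms of the $p_{a,\mu}$'s. The $(-\lambda)$-eigenprojection satisfies $d_{a,-\lambda}=p_{a,-\lambda}-\bigvee_{\mu<-\lambda}p_{a,\mu}$ — this is the standard "jump" of the spectral resolution at the point $-\lambda$, which follows from Theorem \ref{th:specresprops}(v) together with the left-continuity description of $d_{a,\lambda}$ in \cite[\S 8]{Fsa}; if a cleaner reference is available I would use it, otherwise this identity is short to derive from $p_{a,\alpha}=\bigwedge_{\alpha<\mu}p_{a,\mu}$ applied below $-\lambda$ and the definition $d_{a,-\lambda}=1-(a-(-\lambda))^{\dg}$. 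Substituting $d_{a,-\lambda}=p_{a,-\lambda}-\bigvee_{\mu<-\lambda}p_{a,\mu}$ into (i) gives
\[
p_{-a,\lambda}=1-p_{a,-\lambda}+p_{a,-\lambda}-\bigvee_{\mu<-\lambda}p_{a,\mu}=1-\bigvee_{\mu\in\reals,\,\mu<-\lambda}p_{a,\mu},
\]
which is (ii). (One must note that $p_{a,-\lambda}$ dominates $\bigvee_{\mu<-\lambda}p_{a,\mu}$, by monotonicity Theorem \ref{th:specresprops}(ii), so the subtraction makes sense and lands in $P$.)

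Finally, part (iii) follows from (ii) by the affine substitution already packaged in Lemma \ref{le:alphaa+beta}/Lemma \ref{le:E}: apply (ii) with $a$ replaced by $a-1$, using $p_{a-1,\mu}=p_{a,\mu+1}$ from Lemma \ref{le:alphaa+beta}(ii), to get $p_{-(a-1),\lambda}=p_{1-a,\lambda}=1-\bigvee_{\mu<-\lambda}p_{a-1,\mu}=1-\bigvee_{\mu<-\lambda}p_{a,\mu+1}=1-\bigvee_{\nu<1-\lambda}p_{a,\nu}$, reindexing $\nu=\mu+1$. I expect the only genuinely delicate point to be part (i), specifically pinning down the carrier-sum identity $(b^{+})^{\dg}+(b^{-})^{\dg}=b^{\dg}$ and the jump formula $d_{a,-\lambda}=p_{a,-\lambda}-\bigvee_{\mu<-\lambda}p_{a,\mu}$ with precise citations to \cite{Fsa}; everything after that is bookkeeping with $1-(\cdot)$ and reindexing suprema, and should be stated tersely.
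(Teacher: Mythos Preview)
Your proposal is correct and follows essentially the same route as the paper's own proof: the paper also reduces (i) to the identity $((a+\lambda)^{+})^{\dg}+((a+\lambda)^{-})^{\dg}=(a+\lambda)^{\dg}$ (your $(b^{+})^{\dg}+(b^{-})^{\dg}=b^{\dg}$), citing \cite[Theorem 3.3(viii)]{Fsa}; it derives (ii) from (i) via the jump formula $d_{a,-\lambda}=p_{a,-\lambda}-\bigvee_{\mu<-\lambda}p_{a,\mu}$, citing \cite[Theorem 8.4(viii)]{Fsa}; and it obtains (iii) from (ii) using Lemma \ref{le:alphaa+beta}(ii). The specific citations you were looking for are thus \cite[Theorem 3.3(viii)]{Fsa} and \cite[Theorem 8.4(viii)]{Fsa}.
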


\begin{proof}
Evidently,
\[
1-p\sb{-a,\lambda}=((-a-\lambda)\sp{+})\dg=((-(a-(-\lambda))\sp{+})\dg=(a-(-\lambda))\sp{-})\dg
=((a+\lambda)\sp{-})\dg.
\]
Therefore, by \cite[Theorem 3.3 (viii)]{Fsa},
\[
(1-p\sb{a,-\lambda})+(1-p\sb{-a,\lambda})=((a+\lambda)\sp{+})\dg+((a+\lambda)\sp{-})\dg=
(a+\lambda)\dg=1-d\sb{a,-\lambda},
\]
from which (i) follows. As a consequence of \cite[Theorem 8.4 (viii)]{Fsa}, we have
$d\sb{a,-\lambda}=p\sb{a,-\lambda}-\bigvee\sb{\mu\in\reals,\,\mu<-\lambda}p\sb{a,\mu}$,
which, together with (i), yields (ii). Part (iii) follows immediately from (ii) and
Lemma \ref{le:alphaa+beta} (ii).
\end{proof}

\begin{theorem} \label{th:EKleene}
We extend the orthosupplementation $\sp{\perp}\colon E\to E$ to all of the SA $A$ by defining
$a\sp{\perp}:=1-a$ for all $a\in A$. Then{\rm:}

\begin{enumerate}
\item $(A;\leq,\sp{\perp})$ is a regular involution poset and $(E;\leq,\sp{\perp},0,1)$ is a Kleene
poset.
\item $(A;\leq\sb{s},\sp{\perp})$ is a regular involution poset and $(E;\leq\sb{s},\sp{\perp},0,1)$
is a Kleene poset. Moreover, if $A$ is Banach, then $(A;\leq\sb{s},\sp{\perp})$ is a regular involution
lattice and $(E;\leq\sb{s},\sp{\perp},0,1)$ is a Kleene lattice.
\end{enumerate}
\end{theorem}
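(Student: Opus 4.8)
The plan is to verify the two structural conditions (I1)--(I2) that make $\sp{\perp}$ an involution and condition (R) for regularity, in each of the two orders, and then to read off the lattice statement from results already proved. For the synaptic order the involution property is immediate: $(a\sp{\perp})\sp{\perp}=1-(1-a)=a$ gives (I1), and $a\leq b$ yields $1-b\leq 1-a$, i.e. $b\sp{\perp}\leq a\sp{\perp}$, giving (I2). For regularity (R) in $(A;\leq)$, suppose $a\leq a\sp{\perp}=1-a$ and $b\leq b\sp{\perp}=1-b$; then $2a\leq 1$ and $2b\leq 1$, so $a\leq\tfrac12\leq 1-b=b\sp{\perp}$, using that real scalars are comparable to everything. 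This also shows $(E;\leq,\sp{\perp},0,1)$ is a Kleene poset once we note $0\leq e\leq 1$ for $e\in E$ and $\sp{\perp}$ restricts to $E$ (since $0\leq e\leq1\iff 0\leq 1-e\leq1$); that handles part (i).

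For part (ii) the only real work is to show $\sp{\perp}$ is order-reversing for $\leq\sb{s}$, i.e. $a\leq\sb{s}b\Rightarrow b\sp{\perp}\leq\sb{s}a\sp{\perp}$, equivalently $(1-a)$ and $(1-b)$ have spectral resolutions satisfying $p\sb{1-a,\lambda}\leq p\sb{1-b,\lambda}$ for all $\lambda$. The key tool is Lemma~\ref{le:SRes-a}(iii): $p\sb{1-a,\lambda}=1-\bigvee\sb{\mu<1-\lambda}p\sb{a,\mu}$ and likewise for $b$. From $a\leq\sb{s}b$ we have $p\sb{b,\mu}\leq p\sb{a,\mu}$ for every $\mu$, hence $\bigvee\sb{\mu<1-\lambda}p\sb{b,\mu}\leq\bigvee\sb{\mu<1-\lambda}p\sb{a,\mu}$ (these suprema exist in $P$ by $\sigma$-completeness, or more simply because the family is bounded above by $1$ and one can restrict to rationals), and therefore $p\sb{1-a,\lambda}=1-\bigvee\sb{\mu<1-\lambda}p\sb{a,\mu}\leq 1-\bigvee\sb{\mu<1-\lambda}p\sb{b,\mu}=p\sb{1-b,\lambda}$, which is exactly $b\sp{\perp}\leq\sb{s}a\sp{\perp}$. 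Property (I1) is unchanged, and regularity (R) for $\leq\sb{s}$ follows from regularity for $\leq$ together with Corollary~\ref{co:oneproj} or Theorem~\ref{th:commut}: if $a\leq\sb{s}a\sp{\perp}$ and $b\leq\sb{s}b\sp{\perp}$ then in particular $a\leq a\sp{\perp}$ and $b\leq b\sp{\perp}$ by Theorem~\ref{th:le-les}, so $a\leq\tfrac12\leq b\sp{\perp}$ as above, and since $\tfrac12$ is a scalar it commutes with everything, whence $a\leq\sb{s}\tfrac12\leq\sb{s}b\sp{\perp}$ by Theorem~\ref{th:commut}. Restricting to $E$ gives the Kleene poset statement.

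Finally, for the lattice clause: when $A$ is Banach, Theorem~\ref{th:minmax} says $(A;\leq\sb{s})$ is a lattice, so $(A;\leq\sb{s},\sp{\perp})$ is a regular involution lattice by the above; and Theorem~\ref{th:minmax} also records that $E$ is a sublattice of $A$ under $\leq\sb{s}$, so $(E;\leq\sb{s},\sp{\perp},0,1)$ is a Kleene lattice. I expect the main (though still minor) obstacle to be the order-reversal of $\sp{\perp}$ in the spectral order: one must be careful that the suprema $\bigvee\sb{\mu<1-\lambda}p\sb{a,\mu}$ in Lemma~\ref{le:SRes-a}(iii) are taken in the OML $P$ and that monotonicity of suprema under $p\sb{b,\mu}\leq p\sb{a,\mu}$ is valid — which it is, since in any poset an upper bound for the smaller family that dominates the larger family's supremum exists precisely when that larger supremum exists, and here existence is guaranteed in the ambient Banach setting (or handled pointwise in $P$ via the $\sigma$-completeness remark preceding Lemma~\ref{le:supinf}). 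Everything else is formal manipulation with the definitions of involution, regular involution, and Kleene poset.
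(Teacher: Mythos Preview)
Your proof is correct and follows essentially the same route as the paper's: both verify (I1), (I2), and (R) directly, use Lemma~\ref{le:SRes-a}(iii) for order-reversal of $\sp{\perp}$ under $\leq\sb{s}$, reduce spectral regularity to synaptic regularity via commutativity with $\tfrac12$ (the paper phrases this as $aC(1-a)$ and $aC\tfrac12$, you go via Theorem~\ref{th:le-les} first and then Theorem~\ref{th:commut}, but the content is the same), and invoke Theorem~\ref{th:minmax} for the lattice clause. Your closing worry about existence of the suprema $\bigvee\sb{\mu<1-\lambda}p\sb{a,\mu}$ is unnecessary: Lemma~\ref{le:SRes-a} is stated and proved for an arbitrary SA, so those suprema already exist in $P$ without any Banach or $\sigma$-completeness hypothesis.
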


\begin{proof}
(i) For the synaptic order $\leq$, it is clear that the mapping $\sp{\perp}\colon A\to A$ satisfies
conditions (I1) and (I2) of Definition \ref{df:involution}, whence it is an involution on $A$. Moreover,
if $a,b\in A$, with $a\leq(1-a)$, and $b\leq(1-b)$, then $a\leq\frac12$ and $\frac12\leq(1-b)$, hence $a
\leq 1-b$, so (R) is satisfied. Clearly, then, $(E;\leq,\sp{\perp},0,1)$ is a Kleene poset.

(ii) For the spectral order, condition (I1) in Definition \ref{df:involution} is obvious. To prove
condition (I2), suppose that $a,b\in A$ and $a\leq\sb{s}b$. Then $p\sb{b,\mu}\leq p\sb{a,\mu}$ for all
$\mu\in\reals$, whence, for all $\lambda\in\reals$, $1-\bigvee\sb{\mu\in\reals,\,\mu<1-\lambda}p
\sb{a,\mu}\leq1-\bigvee\sb{\mu\in\reals,\,\mu<1-\lambda}p\sb{b,\mu}$, and it follows from Lemma
\ref{le:SRes-a} (iii) that $p\sb{1-a,\lambda}\leq p\sb{1-b,\lambda}$, i.e., $b\sp{\perp}
\leq\sb{s}a\sp{\perp}$. To prove condition (R), suppose that $a,b\in A$ with $a\leq
\sb{s}(1-a)$ and $b\leq\sb{s}(1-b)$.  Since $aC(1-a)$ and $aC\frac12$, we have $a\leq_s 1-a$
iff $a\leq 1-a$ iff $a\leq\frac12$ iff $a\leq_s\frac12$. Likewise, since $bC(1-b)$ and
$\frac12 C(1-b)$, we have $b\leq_s(1-b)$ iff $b\leq(1-b)$ iff $\frac12\leq(1-b)$ iff
$\frac12\leq_s(1-b)$. Therefore, if $a\leq\sb{s}(1-a)$ and $b\leq\sb{s}(1-b)$, then $a\leq_s
\frac12$ and $\frac12\leq_s(1-b)$, whence $a\leq_s(1-b)$, proving (R). Therefore, $(A;\leq\sb{s},
\sp{\perp})$ is a regular involution poset and $(E;\leq\sb{s},\sp{\perp},0,1)$ is a Kleene poset.

If $A$ is Banach, then by Theorem \ref{th:minmax}, $A$ is a lattice under $\leq\sb{s}$, whence
it is a regular involution lattice and $(E;\leq\sb{s},\sp{\perp},0,1)$ is a Kleene lattice.
\end{proof}

\begin{definition}\label{de:bzp} {\rm \cite[Definition 4.1]{GLP}}
A \emph{Brouwer-Zadeh poset} (BZ-poset for short) is a structure $({\mathcal P};\leq, ^{\perp},
^{\sim}, 0,1)$ where
\begin{enumerate}
\item[(1)] $({\mathcal P};\leq, \sp{\perp}, 0, 1)$ is a Kleene poset.
\item[(2)] $\sp{\sim}$ is a unary operation on ${\mathcal P}$, called the \emph{Brouwer
complementation}, such that, for all $a,b\in{\mathcal P}$:
\begin{enumerate}
 \item[(2a)] $a\wedge a^{\sim}=0$.
 \item[(2b)] $a\leq a^{\sim \sim}$.
 \item[(2c)] $a\leq b \ \Rightarrow\ b^{\sim} \leq a^{\sim}$.
\end{enumerate}
\item[(3)] $a^{\sim \perp} = a^{\sim \sim}$.
\end{enumerate}
A BZ-poset that is a lattice is called a \emph{BZ-lattice}.
\end{definition}

\begin{lemma}\label{le:e^o} Let $a,b\in A$. Then $0\leq a\Leftrightarrow0\leq\sb{s}a$ and
\[
\text{if\ }0\leq a,\text{\ then\ }a\leq\sb{s}b\Rightarrow a\leq b\Rightarrow a\dg\leq b\dg
\Leftrightarrow a\dg\leq\sb{s}b\dg.
\]
\end{lemma}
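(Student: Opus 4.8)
\textbf{Proof proposal for Lemma \ref{le:e^o}.}

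The plan is to dispose of the statements one at a time, reading off most of them from results already in hand. For the equivalence $0\leq a\Leftrightarrow 0\leq_s a$, I would simply invoke Theorem \ref{th:commut}: since $0$ commutes with every $a\in A$, we have $0\leq a\Leftrightarrow 0\leq_s a$. (Alternatively one checks that $0\leq_s a$ means $p_{a,\lambda}=1$ for all $\lambda\geq 0$, equivalently $U_a\leq 0$… but the one-line appeal to Theorem \ref{th:commut} is cleaner and is the intended argument.)

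Next, assume $0\leq a$. The implication $a\leq_s b\Rightarrow a\leq b$ is exactly Theorem \ref{th:le-les}, with no hypothesis needed. For $a\leq b\Rightarrow a\dg\leq b\dg$: from $0\leq a\leq b$ one has in particular $0\leq b$, so both carriers are defined, and I would quote the monotonicity of the carrier map on the positive cone, namely \cite[Theorem 2.10 (viii)]{Fsa} (the same fact already used in the proof of Theorem \ref{th:commut}, there applied to $(a-\lambda)^+\leq(b-\lambda)^+$). Thus $0\leq a\leq b$ gives $a\dg\leq b\dg$ directly. Finally, the equivalence $a\dg\leq b\dg\Leftrightarrow a\dg\leq_s b\dg$ is immediate from Lemma \ref{le:proj}, since $a\dg,b\dg\in P$ and the spectral order agrees with the synaptic order on projections.

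I do not anticipate a genuine obstacle here: the lemma is a bookkeeping assembly of Theorem \ref{th:commut} (for the first equivalence), Theorem \ref{th:le-les} (first implication of the chain), monotonicity of the carrier on $A^+$ via \cite[Theorem 2.10 (viii)]{Fsa} (second implication), and Lemma \ref{le:proj} (last equivalence). The only point that requires a moment's care is checking that the carriers $a\dg$ and $b\dg$ are legitimately comparable in $P$ — which they always are, since $P$ is an OML and in fact we are asserting the inequality $a\dg\leq b\dg$, not merely comparability — and that the hypothesis $0\leq a$ (together with $a\leq b$) is what licenses the use of the carrier-monotonicity result, which is stated for positive elements. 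So the write-up is essentially: first sentence handles $0\leq a\Leftrightarrow 0\leq_s a$; then under $0\leq a$, chain the three cited facts.

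\begin{proof}
Since $0$ commutes with $a$, Theorem \ref{th:commut} gives $0\leq a\Leftrightarrow 0\leq_s a$.

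Now assume $0\leq a$. The implication $a\leq_s b\Rightarrow a\leq b$ is Theorem \ref{th:le-les}. Suppose $a\leq b$; then $0\leq a\leq b$, so by \cite[Theorem 2.10 (viii)]{Fsa} the carrier map is order-preserving on the positive cone, whence $a\dg\leq b\dg$. Finally, $a\dg,b\dg\in P$, so by Lemma \ref{le:proj}, $a\dg\leq b\dg\Leftrightarrow a\dg\leq_s b\dg$.
\end{proof}
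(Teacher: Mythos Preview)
Your proof is correct and matches the paper's own argument essentially line for line: invoke $0Ca$ with Theorem \ref{th:commut} for the first equivalence, Theorem \ref{th:le-les} for $a\leq_s b\Rightarrow a\leq b$, carrier monotonicity on $A^+$ from \cite{Fsa} for $a\dg\leq b\dg$, and Lemma \ref{le:proj} for the final equivalence on projections. The only discrepancy is cosmetic---the paper cites \cite[Theorem 2.12 (viii)]{Fsa} rather than 2.10 (viii), but the content is the same.
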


\begin{proof}
Since $0Ca$, we have $0\leq a\Leftrightarrow0\leq\sb{s}a$. Assume that $0\leq a$. Then
$a\leq\sb{s}b\Rightarrow0\leq a\leq b$ and by \cite[Theorem 2.12 (viii)]{Fsa}, $0\leq a\leq b
\Rightarrow a\dg\leq b\dg$. Since both $a\dg$ and $b\dg$ are projections, it follows that
$a\dg\leq b\dg\Leftrightarrow a\dg\leq\sb{s}b\dg$.
\end{proof}

\begin{theorem} \label{th:BZ}
For $e\in E$, define $e^{\sim}:=(e\dg)\sp{\perp}$. Then{\rm:}
\begin{enumerate}
\item $e\in E\Rightarrow e^{\sim \sim}=e\dg$ and $p\in P\Rightarrow p^{\sim}=p\sp{\perp}$.
\item Both $(E;\leq,^{\perp},^{\sim},0,1)$ and $(E;\leq\sb{s},^{\perp},^{\sim},0,1)$ are
 Brouwer-Zadeh posets.
\item If $A$ is Banach, then $(E;\leq\sb{s},^{\perp},^{\sim},0,1)$ is a Brouwer-Zadeh lattice.
\end{enumerate}
\end{theorem}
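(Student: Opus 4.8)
The plan is to verify the three Brouwer--Zadeh axioms from Definition~\ref{de:bzp} for the operation $e^{\sim}:=(e\dg)\sp{\perp}$ on $E$, working with both orders simultaneously wherever possible, and to invoke Lemma~\ref{le:e^o} to transfer facts between $\leq$ and $\leq\sb{s}$. First I would dispose of part (i): if $e\in E$ then $e\dg\in P$, so by the displayed identity in Lemma~\ref{le:e^o} (or directly, since $e\dg$ is sharp) $(e\dg)\dg=e\dg$, hence $e^{\sim\sim}=((e\dg)\sp{\perp})\sp{\sim}=(((e\dg)\sp{\perp})\dg)\sp{\perp}$; because $(e\dg)\sp{\perp}\in P$ its carrier is itself, giving $e^{\sim\sim}=((e\dg)\sp{\perp})\sp{\perp}=e\dg$. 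And for $p\in P$, $p\dg=p$, so $p^{\sim}=p\sp{\perp}$. This also makes axiom (3) of Definition~\ref{de:bzp} immediate: $e^{\sim\perp}=(e\dg)^{\perp\perp}=e\dg=e^{\sim\sim}$.

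For part (ii) I would check (2a), (2b), (2c) of Definition~\ref{de:bzp} for each order. Axiom (2b), $e\leq e^{\sim\sim}$, is just $e\leq e\dg$, which holds in the synaptic order because $e\dg$ is the smallest sharp element dominating $e$; for $\leq\sb{s}$ it then follows from Corollary~\ref{co:oneproj} since $e\dg\in P$. Axiom (2c): if $e\leq f$ in $E$ then $e\dg\leq f\dg$ by \cite[Theorem 2.12(viii)]{Fsa} (equivalently, by the chain in Lemma~\ref{le:e^o}), so $f^{\sim}=(f\dg)\sp{\perp}\leq(e\dg)\sp{\perp}=e^{\sim}$; and if $e\leq\sb{s}f$ then $e\leq f$ by Theorem~\ref{th:le-les}, so the same conclusion holds and moreover $f^{\sim}\leq\sb{s}e^{\sim}$ since these are projections (Lemma~\ref{le:proj}). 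Axiom (2a), $e\wedge e^{\sim}=0$: here $e^{\sim}=(e\dg)\sp{\perp}$ is a projection with $e^{\sim}\leq e\dg{}\sp{\perp}$; since $e\leq e\dg$, any common lower bound $x\geq 0$ of $e$ and $e^{\sim}$ satisfies $x\leq e\dg$ and $x\leq(e\dg)\sp{\perp}$, and in $E$ one has $e\dg\wedge(e\dg)\sp{\perp}=0$ because $e\dg\in P$ and $P$ is an OML, so $x\leq e\dg\wedge(e\dg)\sp{\perp}=0$. Note that $e\wedge e^{\sim}$ here is the infimum in $E$, and by the remarks in Section~\ref{sc:SAs} infima in $P$ agree with infima in $E$; so the identity $e\wedge e^{\sim}=0$ holds for the synaptic order. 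For $\leq\sb{s}$ the same argument works verbatim, using that $e\leq\sb{s}e\dg$ (Corollary~\ref{co:oneproj}) and that the spectral and synaptic orders agree on $P$ (Lemma~\ref{le:proj}), so the OML infimum $e\dg\wedge(e\dg)\sp{\perp}=0$ is also the spectral-order infimum. Together with part (i) this establishes (2a)--(2c) and (3), and combined with Theorem~\ref{th:EKleene} (which supplies the Kleene-poset condition (1)) gives that both structures are BZ-posets.

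Part (iii) is then immediate: if $A$ is Banach, Theorem~\ref{th:EKleene}(ii) tells us $(E;\leq\sb{s},\sp{\perp},0,1)$ is a Kleene lattice, and part (ii) has just shown the Brouwer complementation axioms hold, so $(E;\leq\sb{s},\sp{\perp},\sp{\sim},0,1)$ is a BZ-lattice by Definition~\ref{de:bzp}. The main obstacle I anticipate is axiom (2a): one must be careful that the infimum $e\wedge e^{\sim}$ is being computed in the right poset and that it genuinely exists and equals $0$, rather than merely having $0$ as a lower bound; the key points are that $e^{\sim}$ and $e\dg$ are projections, that infima of projections computed in $P$, in $E$, and (for the Banach case, via Lemma~\ref{le:proj} and Theorem~\ref{th:minmax}) in the spectral lattice all coincide, and that $e\leq e\dg$ (resp.\ $e\leq\sb{s}e\dg$) forces every lower bound of the pair $e,e^{\sim}$ to lie below the OML-meet $e\dg\wedge(e\dg)\sp{\perp}=0$. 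Everything else is a routine unwinding of definitions together with the already-established equivalences between the two orders on projections and on commuting pairs.
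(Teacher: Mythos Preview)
Your proposal is correct and follows essentially the same route as the paper: verify axioms (1)--(3) of Definition~\ref{de:bzp} using Theorem~\ref{th:EKleene} for the Kleene structure, Lemma~\ref{le:e^o} and Corollary~\ref{co:oneproj} for the carrier/order facts, and then invoke the lattice result in the Banach case. The only noteworthy difference is in axiom~(2a): you argue lattice-theoretically that any lower bound $x$ of $e$ and $e^{\sim}$ lies below $e\dg\wedge(e\dg)^{\perp}=0$ (using that infima of projections in $P$ coincide with infima in $E$), whereas the paper argues algebraically that $f\leq(e\dg)^{\perp}$ forces $fe=ef=0$, and then $f\leq e$ gives $0\leq f^{2}\leq fe=0$, so $f=0$; both arguments are short and valid.
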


\begin{proof}
(i) If $p\in P$, then $p\dg=p$. Therefore, as $(e\dg)\sp{\perp}$ is a projection, we have
$e^{\sim \sim}=(((e\dg)\sp{\perp})\dg)\sp{\perp}=((e\dg)\sp{\perp})\sp{\perp}=e\dg$. Similarly,
for $p\in P$, $p^{\sim}=(p\dg)\sp{\perp}=p\sp{\perp}$.

(ii) By Theorem \ref{th:EKleene}, we have part (1) of Definition \ref{de:bzp} for both
$\leq$ and $\leq\sb{s}$. Suppose that $e,f\in E$. To prove (2a), suppose $f\leq\sb{s}e,e
\sp{\sim}$. Then $f\leq e$, $f\leq(e\dg)\sp{\perp}$, and it will be sufficient to prove
that $f=0$. Since $f\leq(e\dg)\sp{\perp}$, it follows that $fe=ef=0$, whence $f\leq e$
implies that $0\leq f\sp{2}\leq fe=0$, whereupon $f=0$. By (i), to prove (2b), we have to
prove that, $e\leq e\dg$ and $e\leq\sb{s}e\dg$. But we have $e\leq e\dg$, and $eCe\dg$,
whence $e\leq\sb{s}e\dg$. To prove (2c), we begin by noting that by Theorem \ref{th:EKleene},
$\sp{\perp}\colon E\to E$ is order-reversing for both $\leq$ and $\leq\sb{s}$; moreover by
Lemma \ref{le:e^o}, $e\leq f$ implies that both $e\dg\leq f\dg$ and $e\dg\leq\sb{s}f\dg$
hold, from which (2c) follows. Finally, (3) is a simple consequence of (i).

(iii) Part (iii) follows from (ii) and the fact that, by Theorem \ref{th:EKleene} (ii), if
$A$ is Banach, then $(E;\leq\sb{s})$ is a lattice.
\end{proof}

\section{De\,Morgan laws}

By Theorem \ref{th:EKleene}, the Kleene complement $e\mapsto e\sp{\perp}=1-e$ is an involution
on both $(E;\leq,\sp{\perp},0,1)$ and $(E;\leq\sb{s},\sp{\perp},0,1)$, hence it satisfies De\,Morgan
laws for both orders $\leq$ and $\leq\sb{s}$. Furthermore, if $A$ is Banach, then by Theorem
\ref{th:EKleene} (ii), $(E;\leq\sb{s},\sp{\perp},0,1)$ is a Kleene lattice, whence for $e,f\in E$,
we have $(e\wedge_sf)\sp{\perp}=e\sp{\perp}\vee_s f\sp{\perp}$ and $(e\vee_s f)\sp{\perp}=e\sp{\perp}
\wedge_s f\sp{\perp}$.

By Theorem \ref{th:BZ}, both $(E;\leq,^{\perp},^{\sim},0,1)$ and $(E;\leq\sb{s},^{\perp},^{\sim}
,0,1)$ are BZ-posets, which raises the question of whether the Brouwer complement $e\mapsto e
\sp{\sim}=(e\dg)\sp{\perp}$ on $E$ satisfies De\,Morgan laws for $\leq$ and $\leq\sb{s}$. For
the synaptic order, the condition $(e\vee f)^{\sim}=e^{\sim}\wedge f^{\sim}$ would mean that
$(e\vee f)^{o \perp}=e^{o \perp}\wedge f^{o \perp}$, i.e. that $(e\vee f)^o=e^o\vee f^o$ if $e\vee f$ exists in $E$. Similarly,
the condition $(e\wedge f)^{\sim}=e^{\sim}\vee f^{\sim}$ would mean that $(e\wedge f)\dg=
e\dg\wedge f\dg$ if $e\wedge f$ exists in $E$. For the spectral order, analogous conditions $(e\vee\sb{s}f)\dg=e\dg\vee\sb{s}
f\dg$ and $(e\wedge\sb{s}f)\dg=e\dg\wedge\sb{s}f\dg$ would be equivalent to the De\,Morgan
laws for existing suprema $e\vee\sb{s}f$ and infima $e\wedge\sb{s}f$ in $(E;\leq\sb{s},\sp{\perp},0,1)$.
(Recall that $e\dg\vee\sb{s}f\dg=e\dg\vee f\dg$ and $e\dg\wedge\sb{s}f\dg=e\dg\wedge f\dg$.) In this
connection, we have the following result.

\begin{lemma}\label{le:dMvee}
Let $e,f\in E$. Then{\rm: (i)} If $e\vee f$ {\rm(}resp. $e\vee\sb{s}f${\rm)} exists in $E$, then
$(e\vee f)\dg=e\dg\vee f\dg$ {\rm(}resp. $(e\vee\sb{s}f)\dg=e\dg\vee\sb{s}f\dg${\rm)}.
{\rm (ii)} If $e\wedge f$ {\rm(}resp. $e\wedge\sb{s}f${\rm)} exists in $E$, then $(e\wedge f)
\dg\leq e\dg\wedge f\dg$ {\rm(}resp. $(e\wedge\sb{s}f)\dg\leq\sb{s}e\dg\wedge\sb{s}f\dg${\rm)}.
\end{lemma}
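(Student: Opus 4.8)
The plan is to assemble the statement from earlier results: the monotonicity of the carrier map on $A^{+}$, the coincidence of $\leq$ and $\leq_s$ on projections and on commuting pairs, and the identity $e=e\dg e$. Throughout, write $g:=e\vee f$ (resp. $g:=e\vee_s f$), $h:=e\wedge f$ (resp. $h:=e\wedge_s f$), and $p:=e\dg\vee f\dg=e\dg\vee_s f\dg$; recall that joins and meets of projections agree in $P$, in $E$, and under both orders.

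First I would do part (ii), which is purely one-directional. If $h:=e\wedge f$ exists in $E$, then $0\leq h\leq e$ and $0\leq h\leq f$, so the carrier-monotonicity clause of Lemma~\ref{le:e^o} gives $h\dg\leq e\dg$ and $h\dg\leq f\dg$, whence $(e\wedge f)\dg=h\dg\leq e\dg\wedge f\dg$. For the spectral version, from $h\leq_s e,f$ Theorem~\ref{th:le-les} yields $h\leq e,f$, then Lemma~\ref{le:e^o} gives $h\dg\leq_s e\dg$ and $h\dg\leq_s f\dg$, so $h\dg\leq_s e\dg\wedge_s f\dg$ because $e\dg\wedge_s f\dg=e\dg\wedge f\dg$ is the $\leq_s$-infimum of $e\dg,f\dg$ in $P$.

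Next, part (i). The ``easy'' inequality $p\leq g\dg$ goes as follows: from $e,f\leq g$ (in the spectral case pass first to $\leq$ via Theorem~\ref{th:le-les}) together with $0\leq e,f$, Lemma~\ref{le:e^o} gives $e\dg,f\dg\leq g\dg$, hence $p=e\dg\vee f\dg\leq g\dg$. The reverse inequality $g\dg\leq p$ is where an actual idea is needed: one shows that $p$ is already an upper bound of $\{e,f\}$. Indeed $e\dg\leq p$ forces $pe\dg=e\dg$, so using the carrier identity $e=e\dg e$ we get $pe=(pe\dg)e=e\dg e=e$, i.e. $e\leq p$; similarly $f\leq p$. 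In the synaptic case this makes $p$ an upper bound of $\{e,f\}$ in $E$, so $g\leq p$, and then $g\dg\leq p\dg=p$ by Lemma~\ref{le:e^o}; combined with $p\leq g\dg$ this gives $(e\vee f)\dg=p=e\dg\vee f\dg$. In the spectral case, $e\leq p$ with $eCp$ gives $e\leq_s p$ by Theorem~\ref{th:commut}, and likewise $f\leq_s p$, so $p$ is an upper bound of $\{e,f\}$ in $(E;\leq_s)$, hence $g\leq_s p$, so $g\leq p$ and again $g\dg\leq p$; together with $p\leq g\dg$ this yields $(e\vee_s f)\dg=p=e\dg\vee_s f\dg$.

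I do not expect a serious obstacle: the argument is essentially bookkeeping with the cited lemmas, the delicate points being only to invoke Theorem~\ref{th:le-les} and Theorem~\ref{th:commut} at the right places so that the carrier monotonicity (which is stated for the synaptic order) transfers to the spectral order. The single non-routine step is the reverse inequality in (i), and there the key move is the identity $pe=e$ derived from $e=e\dg e$ and $e\dg\leq p$, showing that the projection join of the carriers already dominates $e$ and $f$.
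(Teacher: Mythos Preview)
Your proof is correct and follows essentially the same route as the paper: monotonicity of the carrier map (Lemma~\ref{le:e^o}) for both inequalities in (i) and for (ii), with the reverse inequality in (i) coming from the fact that $e\dg\vee f\dg$ is already an upper bound of $\{e,f\}$. Your derivation of $e\leq p$ via $pe=(pe\dg)e=e$ is a small detour---the paper simply uses $e\leq e\dg\leq e\dg\vee f\dg$---but it is correct, and your explicit handling of the spectral case (invoking Theorems~\ref{th:le-les} and~\ref{th:commut}) is more careful than the paper's terse ``a similar argument applies.''
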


\begin{proof}
(i) From $0\leq e,f\leq e\vee f$ and Lemma \ref{le:e^o}, we obtain $e\dg\vee f\dg\leq(e\vee f)\dg$.
On the other hand, $0\leq e\vee f\leq e\dg\vee f\dg$ yields $(e\vee f)\dg\leq e\dg\vee f\dg$. A
similar argument applies to $e\vee\sb{s}f$.

(ii) From $e\wedge f\leq e,f$ we get $(e\wedge f)\dg\leq e\dg\wedge f\dg$ and similarly for
$e\wedge\sb{s}f$.
\end{proof}

In von Neumann algebras, the condition $(e\wedge f)\dg=e\dg\wedge f\dg$ holds with respect to the
numerical order for existing infima of effects $e$ and $f$ iff the von Neumann algebra is of finite
type \cite{CaHa}. A similar result for the spectral order is proved in \cite[Corollary 4.4]{dG}.
If $A$ is of finite type and the OML $P$ is complete, we prove in Theorem \ref{le:caha} that this condition holds for the synaptic order. If, in addition, $A$ is Banach, then we prove in Theorem \ref {th:Banachetc.} that the condition in question holds for the spectral order.

The dimension theory for synaptic algebras (not necessarily norm complete) with a complete projection lattice $P$ has been developed in \cite{FPtd, FPsym}. Recall that an element $s\in A$ is a \emph{symmetry} iff $s^2=1$. A symmetry $s$ is
said to \emph{exchange projections $p$ and $q$} iff $sps=q$. Let $A$ be an SA in which the projections form a complete OML $P$. The \emph{dimension equivalence} $\sim$ on $P$ is defined as follows: for $p,q \in P$, $p\sim q$ iff there are projections $p_1,\ldots,p_n$, $n\in\Nat$, $p=p_1, q=p_n$ such that $p_i$ and
$p_{i+1}$, $i=1,\ldots, n-1$ are exchanged by a symmetry. If $p$ and $q$ are exchanged by a symmetry, then
they are perspective (i.e., they share a common complement) in the OML $P$ \cite[Theorem 5.11]{FPsym}. Two
projections $p$ and $q$ are in \emph{position p$\sp{\prime}$} iff $p\wedge(1-q)=0=(1-p)\wedge q$. Two
projections $p,q$ in position p$\sp{\prime}$ are exchanged by a symmetry (whence they are perspective),
and therefore $p\sim q$ \cite[Corollary 3.9]{Pid}. A projection $p\in P$ is \emph{finite} iff, for any
$q\in P$, $q\sim p$ and $q\leq p$ imply $q=p$. The SA $A$ is of  \emph{finite type} iff every projection
in $P$ is finite.

\begin{lemma}\label{le:finmod} If $A$ is finite type, then  $P$ is a modular OML.
\end{lemma}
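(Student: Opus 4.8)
\textbf{Proof proposal for Lemma \ref{le:finmod}.}
The plan is to exploit the classical fact (see Maeda--Maeda or Kaplansky) that an orthomodular lattice is modular as soon as it contains no infinite family of mutually orthogonal, nonzero, pairwise perspective projections; equivalently, an OML in which every element is a finite element (with respect to the dimension-theoretic notion of finiteness given by the perspectivity-generated equivalence) is modular. Since $A$ is of finite type, every $p\in P$ is finite in precisely this sense, so the abstract OML-theoretic result applies. Concretely, I would first recall from \cite{FPtd, FPsym} that $P$ carries a dimension equivalence $\sim$ generated by perspectivity (via exchanging symmetries), that $\sim$ has the usual additivity and comparability properties of a dimension relation on a complete OML, and that ``$A$ of finite type'' means exactly that no nonzero projection is equivalent to a proper subprojection of itself.

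Next I would reduce modularity to a statement about perspectivity. The key order-theoretic step is the standard one: an OML $P$ fails to be modular iff there exist $p,q\in P$ with $p\le q$ and a projection $r$ such that $p\vee(q\wedge r)<q\wedge(p\vee r)$; passing to the interval sublattice and using orthomodularity, this produces two distinct projections $q_1<q_2$ that are perspective in $P$. Perspective projections are exchanged by a symmetry (here I would cite \cite[Theorem 5.11]{FPsym} and the surrounding dimension theory), hence $q_1\sim q_2$, and then $q_1\sim q_2$ with $q_1\le q_2$ forces $q_1=q_2$ by finiteness of $q_2$ --- a contradiction. Therefore $P$ is modular.

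The main obstacle, and the place where I would have to be careful, is verifying that the perspectivity appearing in a modularity failure is genuinely captured by the dimension equivalence $\sim$ used to define ``finite type'', and that $\sim$ behaves well enough on the complete OML $P$ (additivity over orthogonal sums, Schr\"oder--Bernstein-type cancellation, and the fact that a projection perspective to a subprojection of itself is equivalent to that subprojection). All of this is part of the dimension theory developed in \cite{FPtd, FPsym} for synaptic algebras with complete projection lattice, so the proof amounts to assembling those ingredients rather than proving them afresh; nonetheless the write-up must cite the exact statements (perspective $\Rightarrow$ exchanged by a symmetry $\Rightarrow$ $\sim$, and the additivity/cancellation needed to run the modularity argument) so that the reduction to finiteness is airtight.
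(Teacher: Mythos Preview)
Your overall strategy---reduce modularity to the nonexistence of a projection $\sim$-equivalent to a proper subprojection of itself---is exactly the paper's, but the bridge you build from ``perspective'' to ``$\sim$'' has a gap. You cite \cite[Theorem~5.11]{FPsym} for ``perspective $\Rightarrow$ exchanged by a symmetry,'' but according to the paper that theorem states the \emph{reverse} implication (exchanged by a symmetry $\Rightarrow$ perspective). The implication you need is not among the facts the paper records from the synaptic dimension theory, and it is precisely the delicate point: a modularity failure produces $k<h$ that are perspective \emph{in the interval} $[f\wedge g,\,e\vee f]$ (both are complements of $f$ there), and you still have to manufacture from this a $\sim$-relation in $P$.

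The paper closes this gap by bypassing perspectivity altogether. With $e\le g$, $h:=(e\vee f)\wedge g$, $k:=e\vee(f\wedge g)$, it observes that $h\vee f=k\vee f=e\vee f$ and $h\wedge f=k\wedge f=f\wedge g$, and from these identities deduces that the \emph{shifted} projections $h-(f\wedge g)$ and $(e\vee f)-f$ are in position~p$'$ in $P$, and likewise $k-(f\wedge g)$ and $(e\vee f)-f$. Position~p$'$ \emph{does} imply exchange by a symmetry (\cite[Corollary~3.9]{Pid}), hence $k-(f\wedge g)\sim h-(f\wedge g)$; since $k-(f\wedge g)\le h-(f\wedge g)$, finiteness forces $h=k$. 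So your plan becomes a correct proof once you replace ``perspective $\Rightarrow$ exchanged by a symmetry'' with this concrete position-p$'$ computation; as written, the argument rests on an implication that is cited in the wrong direction and is not otherwise supplied.
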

\begin{proof} Let $e,f,g$ be projections with $e\leq g$ and let $h:=(e\vee f)\wedge g$, $k:=e\vee(f\wedge g)$.
Then $k\leq h$, and both $h$ and $k$ have the property that their join with $f$ is $e\vee f$ and their meet
with $f$ is $f\wedge g$. Thus $h-(f\wedge g)$ and $(e\vee f)-f$ are in position p$\sp{\prime}$, and so are
$(e\vee f)-f$ and $k-(f\wedge g)$. It follows that $k-(f\wedge g) \sim h-(f\wedge g)$, and by finiteness, $h=k$.
\end{proof}

Since the Brouwer complementation defines a mapping $^{\sim}: E \to P$, before considering De Morgan properties,
we show some relations between $E$ and $P$.

\begin{lemma} \label{le:ForInduction}
Let $n\in\{0,1,2,...\}$, let $b\in A$ with $0\leq b\leq\frac{1}{2\sp{n}}$, and put $\lambda:=\frac{1}{2\sp{n+1}}$.
Then there exists $p\in P\cap CC(b)$ such that $0\leq b-\lambda p\leq\lambda$.
\end{lemma}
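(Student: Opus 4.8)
The plan is to produce the desired projection $p$ directly from the spectral resolution of $b$, taking $p := p_{b,\lambda}^{\perp} = 1 - p_{b,\lambda}$, where $\lambda = \frac{1}{2^{n+1}}$. Since the spectral resolution $(p_{b,\mu})_{\mu\in\reals}$ lies in $P\cap CC(b)$, the projection $p$ automatically belongs to $P\cap CC(b)$, which disposes of the commutation requirement for free. So the work is entirely in the two inequalities $0\leq b-\lambda p$ and $b-\lambda p\leq\lambda$. First I would record the two key estimates from Theorem \ref{th:specresprops}(i): $p_{b,\lambda}(b-\lambda)\leq 0$ and $(1-p_{b,\lambda})(b-\lambda)\geq 0$; equivalently, writing $q:=p_{b,\lambda}$ and $p=1-q$, we have $qb\leq \lambda q$ and $\lambda p\leq pb$. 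Because $q\,C\,b$, multiplication by the commuting projections $q$ and $p$ is order preserving here (this is the standard ``$xCy$, $0\leq x$, $0\leq y\Rightarrow 0\leq xy$'' fact, e.g.\ \cite[Lemma 1.5]{Fsa}), and $b = qb + pb$ is the decomposition along the complementary projections.

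For the lower bound: from $\lambda p \leq pb$ we get $0\leq pb - \lambda p$. Also $qb\geq 0$ since $0\leq b$ and $qCb$. Hence $b - \lambda p = (qb) + (pb - \lambda p) \geq 0$, giving $0\leq b-\lambda p$.

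For the upper bound: from $qb\leq \lambda q$ we get $b - \lambda p = qb + pb - \lambda p \leq \lambda q + pb - \lambda p = \lambda q + p(b-\lambda)$. Now $p(b-\lambda) = pb - \lambda p$, and I need $p(b-\lambda)\leq \lambda p$, i.e.\ $pb\leq 2\lambda p = \frac{1}{2^n}p$; this follows from the hypothesis $b\leq \frac{1}{2^n}$ after multiplying by the commuting projection $p$ (using $0\leq \frac{1}{2^n} - b$ and $pC(\frac{1}{2^n}-b)$). Therefore $b-\lambda p \leq \lambda q + \lambda p = \lambda(q+p) = \lambda$, which is the upper bound.

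I do not anticipate a genuine obstacle here; the only point requiring a little care is making sure each multiplication by $q$ or $p$ is legitimately order preserving, which is guaranteed because $q,p\in CC(b)$ commute with $b$ and with the scalars $\lambda$ and $\frac{1}{2^n}$, so the relevant products of commuting nonnegative elements are nonnegative by \cite[Lemma 1.5]{Fsa}. The edge case $n=0$ (where $\frac{1}{2^n}=1$ and $b\in E$) is covered by the same argument with no modification, since $b\leq 1$ is then exactly the effect condition. Finally, I would remark that $p = 1 - p_{b,\lambda}$ is in fact the carrier $(b-\lambda p\text{-related})$ projection one expects, but no such identification is needed for the statement.
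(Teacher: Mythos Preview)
Your proposal is correct and follows essentially the same route as the paper: both take $p:=1-p_{b,\lambda}$ and derive the two inequalities from Theorem~\ref{th:specresprops}(i) together with order-preserving multiplication by the commuting projection $p$ (using $b\leq\frac{1}{2^{n}}$ for the upper bound). The only cosmetic difference is that you organize the estimates via the decomposition $b=qb+pb$, whereas the paper argues $\lambda p\leq pb\leq b$ for the lower bound and $b-\lambda\leq p(b-\lambda)\leq\lambda p$ for the upper bound.
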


\begin{proof}
Put $p=1-p\sb{b,\lambda}\in P\cap CC(b)$. By Theorem \ref{th:specresprops} (i), $0\leq p(b-\lambda)$, i.e.,
$\lambda p\leq pb$. Since $0\leq b\leq\frac{1}{2\sp{n}}\leq 1$, $0\leq p\leq 1$, and $pCb$, we have
\[
pb\leq\frac{1}{2\sp{n}}p,\ pb\leq p, \text{\ and\ }pb\leq b.
\]
Thus, $\lambda p\leq pb\leq b$, i.e., $0\leq b-\lambda p$. Also by Theorem \ref{th:specresprops}
(i), $(1-p)(b-\lambda)\leq 0$, whence
\[
b-\lambda\leq p(b-\lambda)=pb-\lambda p\leq\frac{1}{2\sp{n}}p-\frac{1}{2\sp{n+1}}p=\frac{1}{2\sp{n+1}}p
=\lambda p,
\]
and therefore $b-\lambda p\leq\lambda$.
\end{proof}

The following theorem generalizes \cite[Theorem 4.1.13]{Mur} to an arbitrary SA.

\begin{theorem} \label{th:Mur}
If $e\in E$, there is a sequence of projections $(p\sb{n})\sb{n\in\Nat}\in CC(e)$ such that,
for every $n\in\Nat$, $0\leq e-\sum_{j=1}^n\frac{1}{2\sp{j}}p_j\leq\frac{1}{2\sp{n}}$.
Therefore, $e=\sum\sb{n=1}\sp{\infty}\frac{1}{2\sp{n}}p\sb{n}$, where the series converges
to $e$ in norm.
\end{theorem}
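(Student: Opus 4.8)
\textbf{Proof proposal for Theorem \ref{th:Mur}.}

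The plan is to construct the projections $p_n$ recursively, applying Lemma \ref{le:ForInduction} repeatedly, and then to observe that the resulting partial sums converge to $e$ in norm because the remainders are norm-bounded by $\frac{1}{2^n}\to 0$.

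First I would set $b_0 := e$, so that $0\leq b_0\leq 1 = \frac{1}{2^0}$, which places us in the hypothesis of Lemma \ref{le:ForInduction} with $n=0$. The lemma then supplies a projection $p_1\in P\cap CC(b_0)$ with $0\leq b_0-\frac12 p_1\leq\frac12$. Set $b_1 := b_0-\frac12 p_1 = e-\frac12 p_1$. Then $0\leq b_1\leq\frac{1}{2^1}$, so Lemma \ref{le:ForInduction} applies again (with $n=1$) and yields a projection $p_2\in P\cap CC(b_1)$ with $0\leq b_1-\frac{1}{4}p_2\leq\frac14$. Inductively, having defined $b_{n-1}$ with $0\leq b_{n-1}\leq\frac{1}{2^{n-1}}$, Lemma \ref{le:ForInduction} produces $p_n\in P\cap CC(b_{n-1})$ with $0\leq b_{n-1}-\frac{1}{2^n}p_n\leq\frac{1}{2^n}$, and I set $b_n := b_{n-1}-\frac{1}{2^n}p_n$. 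Unwinding the recursion gives $b_n = e-\sum_{j=1}^n\frac{1}{2^j}p_j$, so the displayed inequality $0\leq e-\sum_{j=1}^n\frac{1}{2^j}p_j\leq\frac{1}{2^n}$ holds for every $n$.

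Next I would address the claim that each $p_n$ lies in $CC(e)$, not merely in $CC(b_{n-1})$. The point is that $b_{n-1} = e-\sum_{j=1}^{n-1}\frac{1}{2^j}p_j$, and one argues by induction that $p_1,\dots,p_{n-1}\in CC(e)$ implies $b_{n-1}\in CC(e)$ is built from $e$ together with elements of $CC(e)$; since $CC(e)$ is a sub-synaptic algebra (in particular a linear subspace containing $1$), we get $b_{n-1}\in CC(e)$, hence $C(b_{n-1})\supseteq C(e)$ fails in general but rather $CC(b_{n-1})\subseteq CC(e)$ because every element commuting with everything that commutes with $e$ also commutes with $b_{n-1}$; thus $p_n\in CC(b_{n-1})\subseteq CC(e)$. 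I should state this containment carefully: if $x\in CC(e)$ and $y\in CC(e)$, then for any $z\in C(e)$ we have $zCx$ and $zCy$, so $zC(x+\alpha y)$, showing $CC(e)$ is closed under the linear operations, and similarly any $w\in C(b_{n-1})$ need not commute with $e$, but any $w\in C(e)$ commutes with $b_{n-1}$, so $C(e)\subseteq C(b_{n-1})$ and therefore $CC(b_{n-1})\subseteq CC(e)$. This is the one place requiring a small amount of care; it is not deep but it is the substantive bookkeeping step.

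Finally, from $0\leq e-\sum_{j=1}^n\frac{1}{2^j}p_j\leq\frac{1}{2^n}$ and the order-unit norm property recalled after axiom SA1 (if $-c\leq x\leq c$ then $\|x\|\leq\|c\|$), we get $\bigl\|e-\sum_{j=1}^n\frac{1}{2^j}p_j\bigr\|\leq\frac{1}{2^n}\to 0$, so the series $\sum_{n=1}^\infty\frac{1}{2^n}p_n$ converges in norm to $e$. I do not expect any real obstacle here; the proof is essentially the synaptic-algebra transcription of the binary-expansion argument for positive operators, with Lemma \ref{le:ForInduction} doing the analytic work and the bicommutant bookkeeping being the only item that needs explicit attention.
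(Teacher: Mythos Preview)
Your proof is correct and follows essentially the same approach as the paper's: a recursive application of Lemma \ref{le:ForInduction} to the remainders $b_n$, together with the observation that $b_{n-1}\in CC(e)$ forces $C(e)\subseteq C(b_{n-1})$ and hence $CC(b_{n-1})\subseteq CC(e)$. (One expository quibble: the clause ``$C(b_{n-1})\supseteq C(e)$ fails in general'' is mistaken---that inclusion is exactly what you go on to prove and use---but the mathematics you actually carry out is fine.)
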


\begin{proof}
We construct by induction a sequence of projections $(p_n)\sb{n\in\Nat}\in CC(e)$ such that
$0\leq e-\sum_{j=1}^n\frac{1}{2\sp{j}}p_j\leq\frac{1}{2\sp{n}}$. Putting $n=0$, $b=e$, and
$p=p\sb{1}$ in Lemma \ref{le:ForInduction}, we obtain $p\sb{1}\in P\cap CC(a)$ with
$0\leq e-\frac12p\sb{1}\leq\frac12$. As our induction hypothesis, suppose that there
exist $p\sb{1},p\sb{2},...,p\sb{n}\in P\cap CC(e)$ such that $0\leq e-\sum_{j=1}^n\frac{1}
{2\sp{j}}p_j\leq\frac{1}{2\sp{n}}$. Now, putting $b=e-\sum_{j=1}^n\frac{1}{2\sp{j}}p_j$, and
$p=p\sb{n+1}$ in Lemma \ref{le:ForInduction}, we have $0\leq b\leq\frac{1}{2\sp{n}}$, and we
obtain $p\sb{n+1}\in P\cap CC(b)$ with $0\leq b-\frac{1}{2\sp{n+1}}p\sb{n+1}\leq\frac{1}{n+1}$,
i.e., $0\leq e-\sum_{j=1}^{n+1}\frac{1}{2\sp{j}}p_j\leq\frac{1}{2\sp{n+1}}$. Clearly, $b\in
CC(e)$, whence $p\sb{n+1}\in CC(b)\subseteq CC(e)$.
\end{proof}

\begin{corollary} \label{co:Mur}
Let $e\in E$ and let $(p\sb{n})\sb{n\in\Nat}$ be the sequence of projections in Theorem
\ref{th:Mur}. Then, $e\dg=\bigvee\sb{n=1}\sp{\infty}p\sb{n}$.
\end{corollary}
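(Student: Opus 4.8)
The plan is to show both inclusions $e\dg\leq\bigvee_{n=1}^\infty p_n$ and $\bigvee_{n=1}^\infty p_n\leq e\dg$, using the two facts established in Theorem \ref{th:Mur}, namely that $e=\sum_{n=1}^\infty\frac{1}{2^n}p_n$ (norm-convergent) with each $p_n\in P\cap CC(e)$, together with the characterization of the carrier $e\dg$ as the smallest projection $q$ with $e=eq$ (equivalently $e=qe$).

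First I would prove $\bigvee_{n=1}^\infty p_n\leq e\dg$. For this it suffices to show $p_n\leq e\dg$ for every $n$, since $e\dg$ is then an upper bound for the family and the supremum (which exists because $P$ is a $\sigma$-complete OML when $A$ is Banach — or more generally we may simply assume the join exists, as the statement presupposes) is the least such. Fix $n$. From $0\leq e-\sum_{j=1}^n\frac{1}{2^j}p_j$ and $0\leq\sum_{j\neq n}\frac{1}{2^j}p_j$ we get $\frac{1}{2^n}p_n\leq e$, hence $p_n\leq 2^n e$. But then, since $p_n$ is a projection and $p_nCe$, we have $p_n = p_n p_n \leq p_n(2^ne)\cdot(\text{no})$—more cleanly: $p_n\leq 2^ne$ gives $p_n^\dg\leq(2^ne)^\dg=e\dg$ by monotonicity of carriers on $A^+$ (\cite[Theorem 2.12 (viii)]{Fsa}, as used in Lemma \ref{le:e^o}), and $p_n^\dg=p_n$ since $p_n$ is a projection. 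Thus $p_n\leq e\dg$, and therefore $\bigvee_{n=1}^\infty p_n\leq e\dg$.

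Next I would prove the reverse inequality $e\dg\leq\bigvee_{n=1}^\infty p_n$. Set $q:=\bigvee_{n=1}^\infty p_n\in P$. The goal is to show $e=eq$, for then minimality of the carrier gives $e\dg\leq q$. Since $p_n\leq q$ we have $p_nq=p_n$ for each $n$. Because multiplication is norm continuous on the relevant bounded pieces and the series $e=\sum_{n=1}^\infty\frac{1}{2^n}p_n$ converges in norm, right-multiplication by the fixed bounded element $q$ is norm continuous, so
\[
eq=\Bigl(\sum_{n=1}^\infty\tfrac{1}{2^n}p_n\Bigr)q=\sum_{n=1}^\infty\tfrac{1}{2^n}(p_nq)=\sum_{n=1}^\infty\tfrac{1}{2^n}p_n=e.
\]
Hence $e=eq$, so $e\dg\leq q=\bigvee_{n=1}^\infty p_n$. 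Combining the two inequalities yields $e\dg=\bigvee_{n=1}^\infty p_n$.

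The only genuinely delicate point is the norm-continuity step: one must be sure that $b\mapsto bq$ is norm continuous on $A$ (or at least on a norm-bounded set containing the partial sums), which is immediate in the enveloping algebra $R$ since $\|bq\|\leq\|b\|\,\|q\|$, and the result $\sum\frac{1}{2^n}p_n q$ lands back in $A$ because each partial sum is in $A$ and $A$ is norm closed (it is Banach, or in any case $eq=e\in A$ a posteriori). A small alternative avoiding even this: show directly that $\|e-eq\|=0$ by estimating $\|e-\sum_{j=1}^n\frac{1}{2^j}p_j\|\leq\frac{1}{2^n}$ from Theorem \ref{th:Mur} and noting $(\sum_{j=1}^n\frac{1}{2^j}p_j)q=\sum_{j=1}^n\frac{1}{2^j}p_j$, so $\|e-eq\|\leq\|e-\sum_{j=1}^n\frac{1}{2^j}p_j\|+\|(\sum_{j=1}^n\frac{1}{2^j}p_j-e)q\|\leq\frac{2}{2^n}\to 0$. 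I expect the write-up to favour this second, fully elementary estimate, as it sidesteps any appeal to continuity of multiplication as a separate lemma.
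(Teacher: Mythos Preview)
Your overall strategy---prove $p_n\leq e\dg$ for each $n$, then prove $e\dg$ is below any upper bound---matches the paper's, and your argument for $p_n\leq e\dg$ is essentially the paper's (the paper writes $\frac{1}{2^n}p_n\leq e_n\leq e$ and takes carriers). However, your treatment of the reverse direction has two genuine issues.

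First, the enveloping algebra $R$ is merely a linear associative unital algebra; it is not assumed to carry a norm, so the inequality $\|bq\|\leq\|b\|\,\|q\|$ you invoke is not available, and neither $eq$ nor $(\sum_{j=1}^n\frac{1}{2^j}p_j-e)q$ need lie in $A$ (they do only a posteriori, once you already know $eq=e$). Both your primary argument and your ``fully elementary estimate'' rely on this unavailable submultiplicativity, so as written the step $eq=e$ is a gap in the general synaptic setting.

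Second, you assume $q:=\bigvee_{n=1}^\infty p_n$ exists in $P$ and then compare it to $e\dg$. The corollary, however, is stated for an arbitrary synaptic algebra (it is invoked in Theorem~\ref{le:caha}, where $A$ need not be Banach), and part of its content is that the supremum \emph{exists}. The paper therefore argues with an \emph{arbitrary} upper bound $q\in P$ of $\{p_n\}$: from $p_j\leq q$ one gets $p_jq=p_j$, hence $e_nq=e_n$; since $e_n\in E$ and $q\in P$, the equivalence $e\leq p\Leftrightarrow ep=e$ for effects and projections gives $e_n\leq q$; then $0\leq q-e_n\to q-e$ in norm and closedness of $A^+$ (\cite[Theorem~4.7]{Fsa}) yields $e\leq q$, so $e\dg\leq q$. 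This simultaneously proves that $e\dg$ is the least upper bound (hence the supremum exists) and avoids any appeal to a norm on $R$. Replacing your $eq=e$ step by this order-theoretic passage $e_nq=e_n\Rightarrow e_n\leq q\Rightarrow e\leq q$ fixes both problems at once.
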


\begin{proof}
For each $n\in\Nat$, let $e\sb{n}:=\sum\sb{j=1}\sp{n}\frac{1}{2\sp{j}}p\sb{j}$, noting that
$e\sb{n}\in E$. Thus, if $n\in\Nat$, then $\frac{1}{2\sp{n}}p\sb{n}\leq e\sb{n}\leq e\leq e\dg$,
so $p\sb{n}=(\frac{1}{2\sp{n}}p\sb{n})\dg\leq e\dg$. Therefore $e\dg$ is an upper bound in $P$
for $\{p\sb{n}:n\in\Nat\}$.

Now, suppose that $q\in P$ and $q$ is an upper bound for $\{p\sb{n}:n\in\Nat\}$. Then, for any
$j\in\Nat$, $p\sb{j}\leq q$, so $p\sb{j}=p\sb{j}q$, and it follows that $e\sb{n}q=e\sb{n}$, whence
$e\sb{n}\leq q$ for all $n\in\Nat$.. Therefore $0\leq(q-e\sb{n})$ and $q-e=\lim\sb{n\rightarrow
\infty}(q-e\sb{n})$, whence $0\leq q-e$ by \cite[Theorem 4.7]{Fsa}. Thus, $e\leq q$, so $e\dg\leq q$,
and it follows that $e\dg=\bigvee\sb{n=1}\sp{\infty}p\sb{n}$.
\end{proof}

\begin{theorem} \label{le:caha}
Let $A$ be an SA of finite type in which $P$ is a complete OML. Then, if $e,f\in E$ and the infimum
$e\wedge f$ with respect to the synaptic order exists in $E$, it follows that $(e\wedge f)\dg=
e\dg\wedge f\dg$.
\end{theorem}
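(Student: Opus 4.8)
The plan is as follows. By Lemma~\ref{le:dMvee}(ii) we already have $(e\wedge f)\dg\leq e\dg\wedge f\dg$, so the real content is the reverse inequality $e\dg\wedge f\dg\leq(e\wedge f)\dg$. Write $g:=e\wedge f$. Since $0\leq g\leq e$ and $0\leq g\leq f$, Lemma~\ref{le:e^o} gives $g\dg\leq e\dg$ and $g\dg\leq f\dg$, hence $g\dg\leq e\dg\wedge f\dg$; thus it suffices to prove $e\dg\wedge f\dg\leq g\dg$.

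The device is to approximate $e$ and $f$ from below by scalar multiples of projections. For $a\in\{e,f\}$ and an integer $n\geq 2$, put $r_n^a:=1-p_{a,1/n}\in P\cap CC(a)$. I claim: (a)~$\tfrac1n r_n^a\leq a$, and (b)~$(r_n^a)_{n\geq 2}$ is increasing with $\bigvee_n r_n^a=a\dg$. For (a): by Theorem~\ref{th:specresprops}(i), $0\leq(1-p_{a,1/n})(a-\tfrac1n)=r_n^a a-\tfrac1n r_n^a$; moreover $a\geq 0$ and $p_{a,1/n}Ca$ force $p_{a,1/n}a\geq 0$ (a commuting product of positives, via SA3), i.e.\ $r_n^a a\leq a$; combining, $\tfrac1n r_n^a\leq r_n^a a\leq a$. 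For (b): by Theorem~\ref{th:specresprops}(ii) the $p_{a,1/n}$ decrease in $n$, so the $r_n^a$ increase; by Theorem~\ref{th:specresprops}(v), $\bigwedge_n p_{a,1/n}=\bigwedge_{0<\mu}p_{a,\mu}=p_{a,0}$, so, using De\,Morgan in the complete OML $P$ together with Lemma~\ref{le:SpecResCarrier}, $\bigvee_n r_n^a=1-\bigwedge_n p_{a,1/n}=1-p_{a,0}=a\dg$.

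Now the hypotheses on $A$ enter. Since $A$ is of finite type, $P$ is a modular OML by Lemma~\ref{le:finmod}, and since $P$ is also complete it is a continuous geometry (this is part of the dimension theory developed in \cite{FPtd,FPsym}); in particular, meets in $P$ distribute over increasing joins. Applying this to the increasing sequences $(r_n^e)_n$ and $(r_n^f)_n$ --- distributing $\cdot\wedge f\dg$ over $\bigvee_n r_n^e$, then $\cdot\wedge r_n^e$ over $\bigvee_m r_m^f$, and passing to the diagonal of the resulting doubly indexed family --- gives
\[
\bigvee_{n\geq 2}\bigl(r_n^e\wedge r_n^f\bigr)=\Bigl(\bigvee_n r_n^e\Bigr)\wedge\Bigl(\bigvee_n r_n^f\Bigr)=e\dg\wedge f\dg.
\]
To finish, fix $n\geq 2$ and set $s_n:=r_n^e\wedge r_n^f\in P$. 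From $s_n\leq r_n^e$ we get $\tfrac1n s_n\leq\tfrac1n r_n^e\leq e$, and likewise $\tfrac1n s_n\leq f$; also $\tfrac1n s_n\in E$. Hence $\tfrac1n s_n$ is a lower bound of $\{e,f\}$ in $E$, so $\tfrac1n s_n\leq g$, whence $s_n=(\tfrac1n s_n)\dg\leq g\dg$ by Lemma~\ref{le:e^o}. Since this holds for all $n$, the displayed equality yields $e\dg\wedge f\dg=\bigvee_n s_n\leq g\dg$, as required.

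The one genuinely nontrivial step is the continuity property of $P$: everything else is routine bookkeeping with spectral resolutions and carriers, but the equality $\bigvee_n(r_n^e\wedge r_n^f)=e\dg\wedge f\dg$ needs $P$ to be upper continuous, and this is precisely where finite type (via modularity) and completeness are both indispensable --- upper continuity already fails for the projection lattice of $\mathcal B(\mathfrak H)$. If one prefers not to invoke the full continuous-geometry theory, only this single instance of upper continuity is needed, and it can be obtained directly: supposing $u:=(e\dg\wedge f\dg)\wedge(\bigvee_n s_n)\sp{\perp}\neq 0$, one applies upper continuity against $\bigvee_n r_n^e$ and then $\bigvee_n r_n^f$ to produce a nonzero projection that is simultaneously $\leq s_m$ for some $m$ and orthogonal to $\bigvee_n s_n$ --- a contradiction.
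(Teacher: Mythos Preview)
Your proof is correct and follows the same overall architecture as the paper's: reduce to $e\dg\wedge f\dg\leq(e\wedge f)\dg$ via Lemma~\ref{le:dMvee}, produce increasing sequences of projections joining up to $e\dg$ and $f\dg$ whose small scalar multiples lie below $e$ and $f$, and then invoke upper continuity of the complete modular OML $P$ (Kaplansky's continuous-geometry theorem) to pass to the limit. The difference is only in the choice of approximating projections. The paper uses the dyadic expansion $e=\sum_i 2^{-i}p_i$ of Theorem~\ref{th:Mur}, with $\tfrac{1}{2^k}\bigvee_{i\leq k}p_i\leq e$, and then needs Corollary~\ref{co:Mur} to identify $\bigvee_i p_i=e\dg$; you instead take $r_n^a=1-p_{a,1/n}$ straight from the spectral resolution and read off $\tfrac1n r_n^a\leq a$ from Theorem~\ref{th:specresprops}(i) and $\bigvee_n r_n^a=a\dg$ from Theorem~\ref{th:specresprops}(v) together with Lemma~\ref{le:SpecResCarrier}. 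Your route is marginally more economical since it bypasses Theorem~\ref{th:Mur} and Corollary~\ref{co:Mur} altogether. One remark: your closing paragraph, offered as a way to avoid the full continuous-geometry machinery, still explicitly invokes upper continuity in its own sketch, so it does not actually sidestep the Kaplansky input; this is harmless, however, since the main argument above it is complete and sound.
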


\begin{proof} Owing to Lemma \ref{le:dMvee}, we only have to prove that $e^o\wedge f^o\leq
(e\wedge f)^o$. We follow \cite{CaHa}.

As in Theorem \ref{th:Mur}, $e=\sum_{i=1}^{\infty}\frac{1}{2\sp{i}}p_i$ and $f=\sum_{j=1}^{\infty}
\frac{1}{2\sp{j}}q_j$, whence for any positive integer $k$ we have
\[
e\geq\sum_{i=1}^k\frac{1}{2^i}p_i\geq\frac{1}{2^k}(p_1+\cdots+p_k)\geq\frac{1}{2^k}\bigvee_{i=1}^k p_i.
\]
and similarly, for any positive integer $\ell$,
\[
f\geq\frac{1}{2^\ell}\bigvee_{j=1}^\ell q_j.
\]
It follows that
\[
e,f\geq\lambda(\bigvee_{i=1}^k p_i\wedge\bigvee_{j=1}^\ell q_j),
\]
where $\lambda=\min(\frac{1}{2^k}, \frac{1}{2^\ell})$. Hence,
\[
(e\wedge f)^o\geq(\lambda(\bigvee_{i=1}^k p_i \wedge \bigvee_{i=1}^\ell q_j))^o=
\bigvee_{i=1}^k p_i \wedge \bigvee_{i=1}^\ell q_j.
\]

Since $A$ is finite type, it follows that $P$ is modular (Lemma \ref{le:finmod}), hence, since $P$ is
complete, it is a continuous geometry \cite{Kap}. This means that if $v,w\in P$ and $(v\sb{n})\sb{n\in\Nat}$
is an increasing sequence in $P$ with $v\sb{n}\nearrow v$ (i.e., $\bigvee\sb{n=1}\sp{\infty}v\sb{n}=v$),
then $v\sb{n}\wedge w\nearrow v\wedge w$. Taking this into account and taking limits first as $k\to\infty$
and then as $\ell\to\infty$ in the inequality $(e\wedge f)\dg\geq\bigvee_{i=1}^k p_i \wedge \bigvee_{i=1}
^\ell q_j$, we deduce that
\[
(e\wedge f)\dg\geq\bigvee_{i=1}^{\infty} p_i\wedge \bigvee_{j=1}^{\infty} q_j.
\]
By Corollary \ref{co:Mur}, $e\dg=\bigvee_{i=1}^{\infty} p_i$ and $f\dg=\bigvee_{j=1}^{\infty} q_j$,
and we obtain
\[
(e\wedge f)\dg\geq e\dg\wedge f\dg. \qedhere
\]
\end{proof}

\begin{theorem} \label{th:Banachetc.}
Let $A$ be a Banach SA of finite type. Then, for all $e,f\in E$,
$(e\wedge\sb{s}f)\dg=e\dg\wedge f\dg$.
\end{theorem}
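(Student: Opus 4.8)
The plan is to read off the carrier $(e\wedge_s f)\dg$ directly from the spectral resolution of $e\wedge_s f$ provided by Definition \ref{de:supinf}, and then to invoke the fact that a complete modular orthomodular lattice is a continuous geometry. Note first that, since $A$ is Banach, $(A;\leq_s)$ is a lattice (Theorem \ref{th:minmax}) and $E$ is closed under $\wedge_s$, so $e\wedge_s f\in E$; in particular $0\leq e\wedge_s f$, and Lemma \ref{le:SpecResCarrier} gives $(e\wedge_s f)\dg=1-p_{e\wedge_s f,0}$. Also, since $A$ is of finite type, its projection lattice $P$ is complete (by the very definition of ``finite type'') and modular by Lemma \ref{le:finmod}, hence a continuous geometry \cite{Kap}. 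Combining $(e\wedge_s f)\dg=1-p_{e\wedge_s f,0}$ with the formula for $p_{e\wedge_s f,0}$ in Definition \ref{de:supinf} and the De\,Morgan law in the complete OML $P$, I would obtain
\[
(e\wedge_s f)\dg=1-\bigwedge_{0<\mu}(p_{e,\mu}\vee p_{f,\mu})=\bigvee_{0<\mu}\bigl((1-p_{e,\mu})\wedge(1-p_{f,\mu})\bigr),
\]
and, in the same way, from Theorem \ref{th:specresprops}(v) (with $\alpha=0$) and Lemma \ref{le:SpecResCarrier}, $e\dg=\bigvee_{0<\mu}(1-p_{e,\mu})$ and $f\dg=\bigvee_{0<\mu}(1-p_{f,\mu})$.

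Next I would replace the real parameter $\mu>0$ by the countable cofinal sequence $\mu=1/n$, $n\in\Nat$: since $\mu\mapsto p_{e,\mu}$, $\mu\mapsto p_{f,\mu}$ and $\mu\mapsto p_{e,\mu}\vee p_{f,\mu}$ are increasing, each of the suprema and infima above is unchanged if $\mu$ is restricted to $\{1/n:n\in\Nat\}$. Setting $v_n:=1-p_{e,1/n}$ and $w_n:=1-p_{f,1/n}$ — increasing sequences in $P$ with $v_n\nearrow e\dg$ and $w_n\nearrow f\dg$ — and using De\,Morgan in $P$ once more, the first display becomes $(e\wedge_s f)\dg=\bigvee_n(v_n\wedge w_n)$, whereas $e\dg\wedge f\dg=\bigl(\bigvee_n v_n\bigr)\wedge\bigl(\bigvee_n w_n\bigr)$. (Recall $e\dg\wedge_s f\dg=e\dg\wedge f\dg$, so this is exactly the asserted equality.)

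The crux is then the identity $\bigvee_n(v_n\wedge w_n)=\bigl(\bigvee_n v_n\bigr)\wedge\bigl(\bigvee_n w_n\bigr)$ for increasing sequences in a continuous geometry; the inequality $\leq$ is trivial. For $\geq$, I would apply the continuity property ($u_n\nearrow u\Rightarrow u_n\wedge w\nearrow u\wedge w$) twice. For fixed $m$, using that $(v_n\wedge w_n)_n$ is increasing and $w_n\geq w_m$,
\[
\bigvee_n(v_n\wedge w_n)=\bigvee_{n\geq m}(v_n\wedge w_n)\geq\bigvee_{n\geq m}(v_m\wedge w_n)=v_m\wedge f\dg;
\]
taking the supremum over $m$ and applying continuity again yields $\bigvee_n(v_n\wedge w_n)\geq\bigvee_m(v_m\wedge f\dg)=e\dg\wedge f\dg$, as required.

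I expect the main obstacle to be bookkeeping rather than conceptual: one must keep every infinite De\,Morgan manipulation within reach of the completeness of $P$ and handle carefully the passage from the real-parameter spectral resolution to the cofinal sequence $(1/n)$. The continuous-geometry step is short and is the only place where finiteness of $A$ is used. An alternative would be to imitate the proof of Theorem \ref{le:caha}, expanding $e$ and $f$ as in Theorem \ref{th:Mur}, but the spectral-resolution route above is the natural one for the spectral order and avoids comparing $e\wedge_s f$ with the (genuinely different) synaptic infimum.
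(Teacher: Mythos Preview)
Your proposal is correct and is essentially the paper's own argument, carried out in De\,Morgan-dual form: the paper works with the decreasing infima $\bigwedge_{\mu>\lambda}(p_{e,\mu}\vee p_{f,\mu})$ and the continuity property $p_\alpha\searrow p\Rightarrow p_\alpha\vee q\searrow p\vee q$, while you pass to orthocomplements and use the join-continuity $u_n\nearrow u\Rightarrow u_n\wedge w\nearrow u\wedge w$; both reduce to the same ``diagonal equals product'' identity in a continuous geometry. The only substantive difference is that the paper first establishes the stronger intermediate formula $p_{e\wedge_s f,\lambda}=p_{e,\lambda}\vee p_{f,\lambda}$ for \emph{all} $\lambda$ (not just $\lambda=0$) before reading off the carrier via Lemma~\ref{le:SpecResCarrier}.
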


\begin{proof}
By our hypotheses, $P$ is a continuous geometry, hence if $p,q\in P$ and $(p\sb{\alpha})\sb{\alpha\in\reals}$
is a decreasing family in $P$ with $p\sb{\alpha}\searrow p$, (i.e., $\bigwedge\sb{\alpha\in\reals}p\sb{\alpha}
=p$), then $p\sb{\alpha}\vee q\searrow p\vee q$.

As per Definition \ref{de:supinf} (ii) and Theorem \ref{th:minmax}, the spectral resolution of $e\wedge\sb{s}f$
is given by
\[
p\sb{e\wedge\sb{s}f,\lambda}=\bigwedge\sb{\lambda<\mu\in\reals}(p\sb{e,\mu}\vee p\sb{f,\mu}),\ \lambda\in\reals.
\]
From $p\sb{e,\lambda}\vee p\sb{f,\lambda}\leq p\sb{e,\mu}\vee p\sb{f,\mu}\leq p\sb{e,\nu}\vee p\sb{f,\nu}$ for
all $\lambda\leq\mu\leq\nu$ in $\reals$, we obtain
\[
p\sb{e,\lambda}\vee p\sb{f,\lambda}\leq \bigwedge\sb{\mu>\lambda}(p\sb{e,\mu}\vee p\sb{f,\mu})\leq
\bigwedge\sb{\nu>\mu}\bigwedge\sb{\mu>\lambda}(p\sb{e,\mu}\vee p\sb{f,\nu})
\]
\[
=\bigwedge\sb{\nu>\lambda}(p\sb{e,\lambda}\vee p\sb{f,\nu})=p\sb{e,\lambda}\vee p\sb{f,\lambda},
\]
and it follows that $p\sb{e\wedge\sb{s}f,\lambda}=p\sb{e,\lambda}\vee p\sb{f,\lambda}$. Therefore,
by Lemma \ref{le:SpecResCarrier},
\[
(e\wedge\sb{s}f)\dg=1-p\sb{e\wedge\sb{s}f,0}=(1-p\sb{e,0})\wedge(1-p\sb{f,0})=e\dg\wedge f\dg. \qedhere
\]
\end{proof}

We note that in \cite{CaHa, H, dG} it was shown that De\,Morgan laws are satisfied iff the corresponding
algebra is of finite type. For a Banach SA, we proved only sufficiency of the latter condition (in both
the synaptic and spectral order)---necessity remains an open problem.

\end{document}